\DeclareTextSymbol{\thh}{T1}{254}
\newtheorem{thm}{Theorem}[subsection]
\newtheorem{lemma}[thm]{Lemma}
\newtheorem{prop}[thm]{Proposition}
\newtheorem{cor}[thm]{Corollary}
\theoremstyle{definition}
\newtheorem{df}[thm]{Definition}
\newtheorem{rmk}[thm]{Remark}
\newtheorem{rmks}[thm]{Remarks}
\newtheorem{ex}[thm]{Example}
\newtheorem{fact}[thm]{Fact}
\newcommand{\BB}{\mathbb{B}}  
\newcommand{\BK}{\mathbb{K}}  
\newcommand{\BN}{\mathbb{N}}
\newcommand{\cu}[1]{\mathcal{#1}}
\newcommand{\bo}[1]{\boldsymbol{#1}}
\newcommand{\sa}[1]{\mathsf{#1}}
\renewcommand{\hat}{\widehat}
\def\indsym#1#2{%
  \setbox0=\hbox{$\m@th#1x$}%
  \kern\wd0%
  \hbox to 0pt{\hss$\m@th#1\mid$\hbox to 0pt{$\m@th#1^{#2}$}\hss}%
  \lower.9\ht0\hbox to 0pt{\hss$\m@th#1\smile$\hss}%
  \kern\wd0}
\def\nindsym#1#2{%
  \setbox0=\hbox{$\m@th#1x$}%
  \kern\wd0%
  \hbox to 0pt{\hss$\m@th#1\not$\kern1.4\wd0\hss}
  \hbox to 0pt{\hss$\m@th#1\mid$\hbox to 0pt{$\m@th#1^{\,#2}$}\hss}%
  \lower.9\ht0\hbox to 0pt{\hss$\m@th#1\smile$\hss}%
  \kern\wd0}
\def\dotminussym#1#2{%
  \setbox0=\hbox{$\m@th#1-$}%
  \kern.5\wd0%
  \hbox to 0pt{\hss\hbox{$\m@th#1-$}\hss}%
  \raise.6\ht0\hbox to 0pt{\hss$\m@th#1.$\hss}%
  \kern.5\wd0}
\def \<{\langle}
\def \>{\rangle}
\def \((  {(\!(}
\def \)) {)\!)}
\def \cl {\operatorname{cl}}
\def \tp{\operatorname{tp}}
\def \acl{\operatorname{acl}}
\def \dcl{\operatorname{dcl}}
\def \int{\operatorname{int}}
\def \DLO{\operatorname{DLO}}
\def \lcl{\operatorname{lcl}}
\def \fo{\operatorname{fdcl}}
\def \range{\operatorname{range}}
\numberwithin{equation}{section}
\def \l{\llbracket}
\def \rr{\rrbracket}
\begin{document}

\title{Definable Closure in Randomizations}

\author{Uri Andrews, Isaac Goldbring, and H. Jerome Keisler}

\address{University of Wisconsin-Madison, Department of Mathematics, Madison,  WI 53706-1388}
\email{andrews@math.wisc.edu}
\urladdr{www.math.wisc.edu/~andrews}
\email{keisler@math.wisc.edu}
\urladdr{www.math.wisc.edu/~keisler}
\address {University of Illinois at Chicago, Department of Mathematics, Statistics, and Computer Science,
Science and Engineering Offices (M/C 249), 851 S. Morgan St., Chicago, IL 60607-7045, USA}
\email{isaac@math.uic.edu}
\urladdr{www.math.uic.edu/~isaac}

\begin{abstract}
The randomization of a complete first order theory $T$ is the complete continuous theory $T^R$
with two sorts, a sort for random elements of models of $T$, and a sort for events in an underlying probability space.
We give necessary and sufficient conditions for an element to be definable over a set of parameters in a model of $T^R$.
\end{abstract}

\subjclass[2010]{Primary 03C40.  Secondary 03B48, 03B50, 03C35}

\maketitle

\section{Introduction}

A randomization of a first order structure $\cu M$, as introduced by Keisler [Kei1] and formalized as a metric structure by Ben Yaacov and Keisler [BK], is a continuous structure $\cu N$ with two sorts, a sort for random elements of $\cu M$, and a sort for events in an underlying atomless probability space. Given a complete first order theory $T$, the theory $T^R$ of randomizations of models of $T$ forms a complete theory in continuous logic, which is called the randomization of $T$.  In a model $\cu N$ of $T^R$, for each $n$-tuple $\vec{ a}$ of random elements and each first order
formula $\varphi(\vec v)$, the set of points in the underlying probability space where $\varphi(\vec{ a})$ is true is an event
denoted by $\l\varphi(\vec{ a})\rr$.

In a first order structure $\cu M$, an element $b$ is \emph{definable over} a set $A$ of elements of $\cu M$ (called parameters) if there is a tuple $\vec a$ in $A$ and a formula $\varphi(u,\vec a)$ such that
$$\cu M\models (\forall u)(\varphi(u,\vec a)\leftrightarrow u=b).$$
In a general metric structure $\cu N$, an element $ b$ is said to be \emph{definable over} a set of parameters $ A$ if there is a sequence of
tuples $\vec{ a}_n$ in $ A$ and continuous formulas $\Phi_n(x,\vec{ a}_n)$ whose truth values converge uniformly to the distance from $x$
to $ b$.  In this paper we give necessary and sufficient conditions for definability in a model of the randomization theory $T^R$.  These conditions
can be stated in terms of sequences of first order formulas.  The results in this paper will be applied in a forthcoming paper about
independence relations in randomizations.

In Theorem \ref{t-definableB}, we show that an event $\sa E$ is definable over a set $ A$ of parameters if and only if it is
the limit of a sequence of events of the form $\l\varphi_n(\vec{ a}_n)\rr$, where each $\varphi_n$ is a
first order formula and each $\vec{ a}_n$ is a tuple from $ A$.

In Theorem \ref{t-separable}, we show that a random element $ b$ is definable over a set $ A$ of parameters if and only if $ b$ is
the limit of a sequence of  random elements $ b_n$ such that for each $n$,
$$\l(\forall u)(\varphi_n(u,\vec{ a}_n)\leftrightarrow u= b_n)\rr$$
has probability one for some first order formula $\varphi_n(u,\vec v)$ and a tuple $\vec{ a}_n$ from $ A$.
In Section 4 we give some consequences in the special case that the underlying first order theory $T$ is $\aleph_0$-categorical.

Continuous model theory in its current form is developed in the papers [BBHU] and [BU].  The papers [Go1], [Go2], [Go3] deal with definability questions
in metric structures.  Randomizations of models are treated in [AK], [Be], [BK], [EG], [GL], [Ke1], and [Ke2].

\section{Preliminaries}

We refer to [BBHU] and [BU] for background in continuous model theory, and follow the notation of [BK].
We assume familiarity with the basic notions about continuous model theory as developed
in [BBHU], including the notions of a theory, structure, pre-structure,  model of a theory,
elementary extension, isomorphism, and $\kappa$-saturated structure.
In particular, the universe of a pre-structure is a pseudo-metric space, the universe of a
structure is a complete metric space, and every pre-structure has a unique completion.
In continuous logic, formulas have truth values in the unit interval $[0,1]$ with $0$ meaning true, the connectives are
continuous functions from $[0,1]^n$ into $[0,1]$, and the quantifiers are $\sup$ and $\inf$.
A \emph{tuple} is a finite sequence, and $A^{<\BN}$ is the set of all tuples of elements of $A$.

\subsection{The theory $T^R$}

We assume throughout that $L$ is a finite or countable first order signature,
and that $T$ is a complete theory for $L$ whose models have at least two elements.

The \emph{randomization signature} $L^R$ is the two-sorted continuous signature
with sorts $\BK$ (for random elements) and $\BB$ (for events), an $n$-ary
function symbol $\l\varphi(\cdot)\rr$ of sort $\BK^n\to\BB$
for each first order formula $\varphi$ of $L$ with $n$ free variables,
a $[0,1]$-valued unary predicate symbol $\mu$ of sort $\BB$ for probability, and
the Boolean operations $\top,\bot,\sqcap, \sqcup,\neg$ of sort $\BB$.  The signature
$L^R$ also has distance predicates $d_\BB$ of sort $\BB$ and $d_\BK$ of sort $\BK$.
In $L^R$, we use ${\sa B},{\sa C},\ldots$ for variables or parameters of sort $\BB$. ${\sa B}\doteq{\sa C}$
means $d_\BB({\sa B},{\sa C})=0$, and ${\sa B}\sqsubseteq{\sa C}$ means ${\sa B}\doteq{\sa B}\sqcap{\sa C}$.

A pre-structure for $T^R$ will be a pair $\cu P=(\cu K,\cu B)$ where $\cu K$ is the part of sort $\BK$ and
$\cu B$ is the part of sort $\BB$.  The \emph{reduction} of $\cu P$ is the pre-structure $\cu N=(\hat{\cu K},\hat{\cu B})$ obtained from
$\cu P$ by identifying elements at distance zero, and the associated mapping from $\cu P$ onto $\cu N$ is called the \emph{reduction map}.
The \emph{completion} of $\cu P$ is the structure obtained by completing the metrics in the reduction of $\cu P$.
A pre-structure $\cu P$ is called \emph{pre-complete} if the reduction of $\cu P$ is already the completion of $\cu P$.

In [BK], the randomization theory $T^R$ is defined by listing a set of axioms.
We will not repeat these axioms here, because it is simpler to give the following model-theoretic
characterization of $T^R$.

\begin{df}  \label{d-nice}
Given a model $\cu M$ of $T$, a \emph{nice randomization of} $\cu M$ is a pre-complete structure $(\cu K,\cu B)$ for $L^R$
equipped with an atomless probability space $(\Omega,\cu B,\mu )$ such that:

\begin{enumerate}
\item $\cu B$ is a $\sigma$-algebra with $\top,\bot,\sqcap, \sqcup,\neg$ interpreted by $\Omega,\emptyset,\cap,\cup,\setminus$.
\item $\cu K$ is a set of functions $a\colon\Omega\to M$.
\item For each formula $\psi(\vec{x})$ of $L$ and tuple
$\vec{a}$ in $\cu K$, we have
$$\l\psi(\vec{a})\rr=\{\omega\in\Omega:\cu M\models\psi(\vec{a}(\omega))\}\in\cu B.$$
\item $\cu B$ is equal to the set of all events
$ \l\psi(\vec{a})\rr$
where $\psi(\vec{v})$ is a formula of $L$ and $\vec{a}$ is a tuple in $\cu K$.
\item For each formula $\theta(u, \vec{v})$
of $L$ and tuple $\vec{b}$ in $\cu K$, there exists $a\in\cu K$ such that
$$ \l \theta(a,\vec{b})\rr=\l(\exists u\,\theta)(\vec{b})\rr.$$
\item On $\cu K$, the distance predicate $d_\BK$ defines the pseudo-metric
$$d_\BK(a,b)= \mu \l a\neq b\rr .$$
\item On $\cu B$, the distance predicate $d_\BB$ defines the pseudo-metric
$$d_\BB({\sa B},{\sa C})=\mu ( {\sa B}\triangle {\sa C}).$$
\end{enumerate}
\end{df}

\begin{df}  For each first order theory $T$, the \emph{randomization theory} $T^R$ is
the set of sentences that are true in all nice randomizations of models of $T$.
\end{df}

It follows that for each first order sentence $\varphi$, if $T\models\varphi$
then $T^R\models \l\varphi\rr\doteq \top$.
The following basic facts are from [BK], Theorem 2.1 and Proposition 2.2, Example 3.4 (ii), Proposition 2.7, and Theorem 2.9.

\begin{fact}  \label{f-complete}
For every complete first order theory $T$, the randomization theory $T^R$ is complete.
\end{fact}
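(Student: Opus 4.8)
The plan is to derive completeness of $T^R$ from completeness of $T$ by showing that any two nice randomizations of models of $T$ are elementarily equivalent as $L^R$-structures. Together with the companion results cited above from [BK] — in particular that (the completion of) every model of the $T^R$ axioms is a nice randomization of some model of $T$ — this forces every $L^R$-sentence to take a single value across all models of $T^R$, which is exactly what completeness means in continuous logic.

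First I would reduce to the statement: if $\cu M_1,\cu M_2\models T$ and $\cu N_i$ is a nice randomization of $\cu M_i$, then $\cu N_1\equiv\cu N_2$. Since $T$ is complete, $\Th(\cu M_1)=\Th(\cu M_2)=T$, so all first-order data is pinned down. The heart of the argument is a Fubini-style quantifier reduction: by induction on $L^R$-formulas $\Phi(\vec x)$ in the $\BK$-sort, one shows that, uniformly over all nice randomizations, the value $\Phi^{\cu N}(\vec a)$ is a fixed continuous function of finitely many real numbers $\mu\l\psi_j(\vec a)\rr$ with the $\psi_j$ first-order formulas of $L$. The atomic and connective cases are immediate from the shape of $L^R$; the quantifier case, say $\inf_{a\in\cu K}\Phi(a,\vec b)$, is where nice randomizations enter: using the fullness axiom (item (5) of Definition \ref{d-nice}) and atomlessness of $(\Omega,\cu B,\mu)$, one pieces together on finitely many events elements of $\cu K$ realizing prescribed complete first-order types over $\vec b$, so that the infimum over $\cu K$ coincides with the infimum of the target expression over all measure-algebra configurations that are consistent with $T$. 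Feeding this into a sentence $\sigma$ (no free $\BK$-variables), the value $\sigma^{\cu N}$ is expressed purely in terms of which joint distributions of truth values of $L$-formulas are realizable, and that set depends only on $T$ and on atomlessness; hence $\sigma^{\cu N_1}=\sigma^{\cu N_2}$.

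The main obstacle is this quantifier step: showing that $\cu K$ is "rich enough" that $\inf_{a\in\cu K}\Phi^{\cu N}(a,\vec b)$ does not depend on which particular nice randomization one works in. This needs a measurable-selection / piecing-together lemma — split $\Omega$, up to arbitrarily small measure error (using atomlessness), into finitely many events, on each of which one wants the witness $a$ to realize a specified complete first-order type over $\vec b$, produce such an $a$ from the fullness axiom and closure of $\cu K$ under definable combinations, and then pass to the limit because a prescribed type may only be approximated on a positive-measure set. The delicate bookkeeping is controlling the uniform convergence built into the continuous-logic notion of a formula and checking that all error terms are uniform across randomizations; once that is in place, completeness follows. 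An essentially equivalent alternative would be a back-and-forth between $\aleph_1$-saturated elementary extensions of $\cu N_1$ and $\cu N_2$, matching the $\BB$-sort via Maharam homogeneity of atomless probability algebras and the $\BK$-sort via saturation of the underlying models of $T$; the same piecing-together lemma is the crux there as well.
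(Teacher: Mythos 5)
The paper does not actually prove this statement: it is quoted as Fact~\ref{f-complete} from [BK] (Theorem 2.1 there), so there is no in-paper proof to compare against. Your sketch is essentially the argument of [BK]: since $T^R$ is by definition the common theory of all nice randomizations, completeness amounts to their pairwise elementary equivalence, which one gets by reducing every $L^R$-formula, by induction using fullness, gluing, and atomlessness, to the joint distributions of first-order events $\mu\l\psi_j(\vec a)\rr$, which a complete $T$ pins down for sentences; the only points to tighten are that the quantifier step yields a \emph{uniform limit} of continuous combinations of such measures rather than an exact finite expression (approximate quantifier elimination), and that quantifiers of sort $\BB$ must also be treated, which reduces to the $\BK$-sort case via perfect witnesses (Fact~\ref{f-perfectwitnesses}(2)).
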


\begin{fact}  \label{f-T^R}
Every model $\cu M$ of $T$ has nice randomizations.
\end{fact}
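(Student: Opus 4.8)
The plan is to exhibit, for an arbitrary model $\cu M\models T$, an explicit nice randomization. Fix once and for all a standard atomless probability space $(\Omega,\cu B,\mu)$ — say $\Omega=[0,1]$ with $\cu B$ the Lebesgue sets and $\mu$ Lebesgue measure — and let $\cu K$ be the set of all $\cu B$-measurable functions $a\colon\Omega\to M$ with \emph{countable} range; equivalently, those $a$ for which $\Omega$ has a countable measurable partition on each piece of which $a$ is constant. Interpret $L^R$ on $(\cu K,\cu B)$ in the evident way: the Boolean symbols $\top,\bot,\sqcap,\sqcup,\neg$ by $\Omega,\emptyset,\cap,\cup,\setminus$; the predicate $\mu$ by the measure; for a first order $\varphi(\vec v)$ and $\vec a$ in $\cu K$ set $\l\varphi(\vec a)\rr:=\{\omega:\cu M\models\varphi(\vec a(\omega))\}$; and $d_\BK(a,b):=\mu\l a\neq b\rr$, $d_\BB(\sa B,\sa C):=\mu(\sa B\triangle\sa C)$. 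Because each $\vec a$ has countable range, $\l\varphi(\vec a)\rr$ is a countable union of measurable sets, so everything is well defined; and each $L^R$-symbol is $1$-Lipschitz in the relevant coordinates, so $(\cu K,\cu B)$ is a pre-structure for $L^R$. Conditions (1), (2), (3), (6), (7) of Definition~\ref{d-nice} then hold directly from the construction.

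For (4) one shows every $\sa B\in\cu B$ has the form $\l\varphi(\vec a)\rr$: since $\cu M$ has at least two elements, pick distinct $m_0,m_1\in M$; the two-valued function $a_1$ equal to $m_0$ on $\sa B$ and $m_1$ off $\sa B$ lies in $\cu K$, and evaluating the $L$-formula $v_1=v_2$ at $(a_1,c)$ with $c$ the constant function $m_0$ recovers $\sa B$. For (5), given $\theta(u,\vec v)$ and $\vec b\in\cu K$, list the countable range of $\vec b$ as $\{\vec c_i:i\in\BN\}$ with $\vec b=\vec c_i$ on a measurable piece $\sa B_i$; choose a witness $w_i\in M$ for each $i$ with $\cu M\models\exists u\,\theta(u,\vec c_i)$, and let $a\in\cu K$ equal $w_i$ on $\sa B_i$ (and $m_0$ elsewhere). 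Since $w_i$ satisfies $\theta(\cdot,\vec c_i)$ when it is chosen and nothing does when it is not, $\l\theta(a,\vec b)\rr$ and $\l(\exists u\,\theta)(\vec b)\rr$ agree on each $\sa B_i$, hence are equal.

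It remains to check pre-completeness, i.e.\ that the reduction of $(\cu K,\cu B)$ is already complete. The $\BB$-part reduces to the measure algebra of $(\Omega,\cu B,\mu)$, which is complete by the classical fact that measure algebras are complete metric spaces. For the $\BK$-part, if $(a_n)$ is $d_\BK$-Cauchy, pass to a subsequence with $\mu\l a_{n_{k+1}}\neq a_{n_k}\rr<2^{-k}$; by Borel--Cantelli, for almost every $\omega$ the sequence $(a_{n_k}(\omega))_k$ is eventually constant, so its pointwise limit $a$ (set to $m_0$ on the null exceptional set) is measurable with range inside $\{m_0\}\cup\bigcup_k\range(a_{n_k})$, hence countable. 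Thus $a\in\cu K$ and $d_\BK(a_n,a)\to0$, so the reduction is complete and $(\cu K,\cu B)$ is pre-complete; therefore it is a nice randomization of $\cu M$.

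The one genuinely delicate point — the main thing to get right — is the choice of the domain $\cu K$. Taking only the finitely-valued (simple) functions would make the witness selection in (5) trivial but would break pre-completeness; taking all measurable functions $\Omega\to M$ would restore pre-completeness but turn (5) into a genuine measurable-selection problem, since $M$ carries no Polish structure. Restricting to countably-valued functions is exactly the class on which the selection in (5) is automatically measurable and which is closed under $d_\BK$-limits, and once it is in hand every clause of Definition~\ref{d-nice} is a routine verification.
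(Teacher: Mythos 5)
Your construction is correct, and it is essentially the standard one: the paper does not prove this Fact itself but quotes it from [BK] (Proposition 2.2 / Example 3.4), where the witnessing structure is exactly the one you build --- an atomless probability space together with the countably-valued measurable functions into $\cu M$. All the key points are in place: countable range makes each $\l\varphi(\vec a)\rr$ measurable and makes the witness selection in clause (5) a pointwise (hence automatically measurable) choice on the countably many level sets of $\vec b$; the two-element trick gives clause (4); and the Borel--Cantelli argument correctly shows that the countably-valued functions are closed under $d_\BK$-limits, so the reduction is complete in both sorts. Your closing remark about why simple functions are too few and arbitrary measurable functions are too many is exactly the right diagnosis of where the construction could go wrong.
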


\begin{fact}  \label{f-perfectwitnesses}  (Fullness)
Every pre-complete model $\cu P=(\cu K,\cu B)$ of $T^R$ has perfect witnesses, i.e.,
\begin{enumerate}
\item  For each first order formula $\theta(u,\vec v)$ and each $\vec{b }$ in $\cu K^n$ there exists $a \in\cu K$ such that
$$ \l\theta(a,\vec b)\rr \doteq
\l(\exists u\,\theta)(\vec{b })\rr;$$
\item For each ${\sa B}\in\cu B$ there exist $a ,b \in\cu K$ such that
${\sa B}\doteq\l a=b \rr$.
\end{enumerate}
\end{fact}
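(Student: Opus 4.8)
## Proof Proposal for Fact \ref{f-perfectwitnesses} (Fullness)

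The plan is to derive fullness from the axiomatic presentation of $T^R$ in [BK] together with the fact that $\cu P$ is pre-complete. Recall that the [BK] axioms include, for each first order formula $\theta(u,\vec v)$, a "maximum principle" axiom scheme asserting that $\inf_{a}\, d_\BB\bigl(\l\theta(a,\vec b)\rr,\ \l(\exists u\,\theta)(\vec b)\rr\bigr)=0$, i.e., the supremum over witnesses of the event $\l\theta(a,\vec b)\rr$ approximates $\l(\exists u\,\theta)(\vec b)\rr$ arbitrarily well in measure. Since $\cu P$ is a model of $T^R$, this gives, for each $n\in\BN$, an element $a_n\in\cu K$ with $d_\BB\bigl(\l\theta(a_n,\vec b)\rr,\ \l(\exists u\,\theta)(\vec b)\rr\bigr)<2^{-n}$.

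For part (1), the key step is to patch these approximate witnesses into a single exact witness using the pre-completeness of $\cu P$. First I would choose the $a_n$ more carefully: having selected $a_0,\dots,a_{n}$, note that on the event $\sa E_n := \l(\exists u\,\theta)(\vec b)\rr \setminus \l\theta(a_n,\vec b)\rr$ (which has measure $<2^{-n}$) the existential is still witnessed, so by applying the maximum principle relativized to $\sa E_n$ — or more simply by applying it again and then splicing — one produces $a_{n+1}$ agreeing with $a_n$ off $\sa E_n$ and improving the witnessing event inside $\sa E_n$. Splicing of two elements of $\cu K$ along an event of $\cu B$ is available because $\cu K$ is closed under the definable operations of $L^R$ and, in a model of $T^R$, one can realize "if $\sa B$ then $a$ else $b$" (this is itself a consequence of the maximum principle applied to the formula $\theta'(u,\vec v) \equiv (\beta \wedge u = a) \vee (\neg\beta \wedge u = b)$ after coding $\sa B$ as $\l\beta\rr$, using part (2) below, or it is listed directly among the [BK] axioms). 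The sequence $(a_n)$ is then $d_\BK$-Cauchy since $d_\BK(a_n,a_{n+1}) \le \mu(\sa E_n) < 2^{-n}$, so by pre-completeness it has a limit $a\in\cu K$. Continuity of $\l\theta(\cdot,\vec b)\rr$ gives $\l\theta(a,\vec b)\rr = \lim_n \l\theta(a_n,\vec b)\rr = \l(\exists u\,\theta)(\vec b)\rr$, as the right-hand events form a decreasing sequence converging to $\l(\exists u\,\theta)(\vec b)\rr$ in $d_\BB$.

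For part (2), fix $\sa B\in\cu B$. Since models of $T$ have at least two elements, pick (first order) formulas or just use $T \models (\exists u)(\exists w)\, u \neq w$; apply part (1), already established, to $\theta(u,w) \equiv (u \neq w)$ to get elements of $\cu K$ taking at least two distinct values on a full-measure event, hence constructing $c,e \in \cu K$ with $\mu\l c = e\rr$ arbitrary — more directly, using the "if $\sa B$ then $c$ else $e$" construction above with $c,e$ chosen so that $\l c = e\rr \doteq \bot$, the element $a := (\text{if }\sa B\text{ then }c\text{ else }e)$ and $b := e$ satisfy $\l a = b\rr \doteq \sa B$. The existence of such $c,e$ with $\l c=e\rr\doteq\bot$ follows by applying part (1) to $u\ne w$ to split $\Omega$, then iterating; alternatively it is immediate from Fact \ref{f-T^R} by transferring through an isomorphism with a nice randomization.

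The main obstacle I expect is part (1): turning the sequence of approximate witnesses into an exact one requires the splicing operation to genuinely live inside $\cu K$ and requires care that the Cauchy sequence's limit still witnesses the \emph{full} event $\l(\exists u\,\theta)(\vec b)\rr$ and not merely its approximations. Pre-completeness is exactly what is needed to close this gap, and the decreasing-sequence argument for the events on the $\BB$ side is what guarantees no measure "leaks out" in the limit. Everything else — the maximum principle, closure of $\cu K$ under definable operations, the two-element hypothesis on models of $T$ — is either axiomatic in [BK] or routine.
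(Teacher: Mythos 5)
First, a point of reference: the paper does not prove Fact \ref{f-perfectwitnesses} at all --- it is one of the results imported from [BK] in the list preceding it --- so your proposal can only be measured against the standard argument from that source. For part (1) your outline is essentially that argument and is sound: the maximal-principle axiom scheme of $T^R$ supplies approximate witnesses $a_n$ with $\mu(\sa E\setminus\l\theta(a_n,\vec b)\rr)<2^{-n}$, where $\sa E=\l(\exists u\,\theta)(\vec b)\rr$; successive witnesses are spliced so that they differ only on $\sa E\setminus\l\theta(a_n,\vec b)\rr$; and pre-completeness yields the limit. The splicing needed at this stage is only along events of the form $\l\psi(\vec c)\rr$ with $\psi$ first order and $\vec c$ from $\cu K$, and it can be obtained non-circularly by applying the maximal principle to $(\psi(\vec w)\wedge z=u)\vee(\neg\psi(\vec w)\wedge z=v)$, whose existential closure is valid, and absorbing the resulting approximation error into the $2^{-n}$ bounds. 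One small correction: the events $\l\theta(a_n,\vec b)\rr$ \emph{increase} to $\sa E$; it is their complements inside $\sa E$ that decrease.

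The genuine gap is in part (2). Your only mechanism for splicing along an \emph{arbitrary} $\sa B\in\cu B$ is to ``code $\sa B$ as $\l\beta\rr$, using part (2) below,'' which is exactly the statement being proved, so this is circular; and the ``if $\sa B$ then $a$ else $b$'' operation is not among the [BK] axioms but is Lemma \ref{l-glue} of the present paper, which is itself deduced \emph{from} Fullness. More seriously, part (2) simply does not follow from the maximal principle together with the Boolean, measure, and atomlessness axioms: one could enlarge the event sort of a nice randomization by an independent atomless factor, leaving the interpretation of every $\l\varphi(\cdot)\rr$ unchanged, and all of those axioms would survive while part (2) fails. The missing ingredient is the [BK] axiom asserting that events of the form $\l x=y\rr$ are dense in $(\cu B,d_\BB)$; granting it, part (2) follows by the same scheme as part (1): approximate $\sa B$ by events $\l a_n=b_n\rr$ to within $2^{-n}$, correct successive approximations by splicing along the (first-order definable) symmetric differences, and pass to the limit by pre-completeness. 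Your derivation of a pair $c,e$ with $\l c=e\rr\doteq\bot$ from part (1) is fine, but note that with your choice $a:=(\mbox{if }\sa B\mbox{ then }c\mbox{ else }e)$ and $b:=e$ you get $\l a=b\rr\doteq\neg\sa B$, not $\sa B$.
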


\begin{cor}  \label{c-two}  Every model $\cu N$ of $T^R$ has a pair of elements $ c,  d$ such that $\l  c\ne d\rr=\top$.
\end{cor}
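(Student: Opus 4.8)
The plan is to read the corollary off from Fullness. Let $\cu N=(\cu K,\cu B)$ be a model of $T^R$. Since a model is in particular a reduced and complete (hence pre-complete) structure, Fact~\ref{f-perfectwitnesses} applies to $\cu N$, and moreover in $\cu N$ the relation $\doteq$ is genuine equality; so it suffices to produce $c,d\in\cu K$ with $\l c\neq d\rr\doteq\top$. Note also that $\cu B$ contains $\bot$, since $\bot$ is one of the Boolean constants of sort $\BB$ in $L^R$ and $\cu N$ must interpret it.

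First I would apply part~(2) of Fact~\ref{f-perfectwitnesses} to the event $\bot$, obtaining $c,d\in\cu K$ with $\bot\doteq\l c=d\rr$; in particular $\mu\l c=d\rr=\mu(\bot)=0$, as $\doteq$-equal events have equal $\mu$. It then remains to check $\l c\neq d\rr\doteq\top$. Using the commutation identity $\l\neg(c=d)\rr\doteq\neg\l c=d\rr$ — valid in every nice randomization, hence a consequence of $T^R$ — together with $\mu(\neg\sa B)=1-\mu(\sa B)$, $\mu(\top)=1$, and $d_\BB(\sa B,\sa C)=\mu(\sa B\,\triangle\,\sa C)$, one computes
$$d_\BB(\l c\neq d\rr,\top)=\mu(\l c\neq d\rr\,\triangle\,\top)=1-\mu\l c\neq d\rr=1-\bigl(1-\mu\l c=d\rr\bigr)=\mu\l c=d\rr=0 .$$
Hence $\l c\neq d\rr\doteq\top$, so $\l c\neq d\rr=\top$ in $\cu N$, as required.

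There is essentially no real obstacle here; the argument is only a couple of lines. The points requiring mild care are (i) invoking the correct ``basic facts'' — Fullness for the witnesses and the elementary commutation identities for $\l\cdot\rr$ under the Boolean operations, all of which hold in nice randomizations and therefore in every model of $T^R$ — and (ii) recording that $\doteq$ coincides with $=$ in a genuine model. As an alternative that avoids part~(2) of Fullness: since the models of $T$ have at least two elements, $T\models(\exists u)(\exists v)(u\neq v)$, so $\l(\exists u)(\exists v)(u\neq v)\rr=\top$ in $\cu N$ by the remark preceding Fact~\ref{f-complete}; then one applies part~(1) of Fact~\ref{f-perfectwitnesses} twice, first witnessing the outer existential to get $d\in\cu K$ with $\l(\exists u)(u\neq d)\rr=\top$, and then, with parameter $d$, getting $c\in\cu K$ with $\l c\neq d\rr\doteq\l(\exists u)(u\neq d)\rr=\top$.
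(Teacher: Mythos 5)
Your argument is correct. The alternative you sketch at the end --- noting that $T\models(\exists u)(\exists v)\,u\ne v$ since models of $T$ have at least two elements, and then applying Fullness, part (1), twice --- is exactly the paper's proof. Your primary route instead applies Fullness, part (2), to the event $\bot$, obtaining $c,d$ with $\l c=d\rr\doteq\bot$, and the ensuing computation with $\mu$ and $d_\BB$ is fine; this is an equally short and valid derivation from the same Fact. The only difference worth remarking is where the standing hypothesis ``models of $T$ have at least two elements'' enters: the paper's route invokes it explicitly (it is what makes $\l(\exists u)(\exists v)\,u\ne v\rr=\top$), whereas your primary route buries it inside the black box of Fullness, part (2) --- if models of $T$ had only one element, $\l a=b\rr$ would always equal $\top$ and part (2) would fail for $\sa B=\bot$. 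Since Fullness is quoted as a Fact from [BK], this is not a gap, but the paper's version makes the logical dependence more transparent.
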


\begin{proof}  Every model of $T$ has at least two elements, so $T\models(\exists u)(\exists v)u\ne v$.
The result follows by applying Fullness twice.
\end{proof}

\begin{fact} \label{f-qe} (Strong quantifier elimination)
Every formula $\Phi$ in the continuous language $L^R$
is $T^R$-equivalent to a formula with the same free variables
and no quantifiers of sort $\BK$ or $\BB$.
\end{fact}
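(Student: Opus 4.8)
The plan is to prove quantifier elimination by induction on the structure of $\Phi$, reducing the whole problem to the elimination of a single innermost quantifier of sort $\BK$ applied to a quantifier-free formula.

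First come the routine reductions. Using the $T^R$-axioms that say $\l\cdot\rr$ commutes with the Boolean connectives, together with $d_\BB(\sa B,\sa C)\doteq\mu(\sa B\triangle\sa C)$ and $d_\BK(a,b)\doteq\mu\l a\ne b\rr$, every atomic formula of $L^R$ is $T^R$-equivalent to one built only from $\mu$, the Boolean operations, first-order events $\l\psi(\vec a)\rr$ with $\vec a$ of sort $\BK$, and free variables or parameters of sort $\BB$; the connectives case of the induction is then trivial. For an event quantifier $\sup_{\sa B}\Psi$ (with $\Psi$ already made quantifier-free by the induction hypothesis), part~(2) of Fullness says every event in a model of $T^R$ equals $\l x=y\rr$ for suitable $x,y$ of sort $\BK$, so $\sup_{\sa B}\Psi\equiv\sup_x\sup_y\Psi[\l x=y\rr/\sa B]$ and dually for $\inf$; this trades an event quantifier for two $\BK$-quantifiers. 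Hence it suffices to eliminate one quantifier $\sup_x\Phi_0$ (or $\inf_x\Phi_0$) with $x$ of sort $\BK$ and $\Phi_0$ quantifier-free.

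To do this I would fix the finitely many first-order formulas $\varphi_1(u,\vec v),\dots,\varphi_k(u,\vec v)$ occurring inside $\l\cdot\rr$ in $\Phi_0$ (absorbing the $\BK$-parameters into $\vec v$) and the finitely many free event variables or parameters $\sa B_1,\dots,\sa B_m$, and pass to the finite Boolean algebra they generate, whose atoms have the form $\l\alpha_s(u,\vec v)\rr\sqcap\sa C_w$, where $\alpha_s=\bigwedge_{i\in s}\varphi_i\wedge\bigwedge_{i\notin s}\neg\varphi_i$ and $\sa C_w$ is the Boolean atom determined by $\sa B_1,\dots,\sa B_m$. Evaluating at parameters $\vec b,\bar{\sa B}$, the value of $\Phi_0(x,\vec b,\bar{\sa B})$ is then a fixed continuous function $g$ of the matrix of numbers $p_{s,w}(x):=\mu(\l\alpha_s(x,\vec b)\rr\sqcap\sa C_w)$. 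The key claim is that as $x$ ranges over $\cu K$, the achievable matrices $(p_{s,w})$ are exactly those with $p_{s,w}\ge0$, with $\sum_s p_{s,w}=\mu(\sa C_w)$ for every $w$, and with $\sum_{s\in S}p_{s,w}\le\mu\bigl(\l(\exists u\bigvee_{s\in S}\alpha_s)(\vec b)\rr\sqcap\sa C_w\bigr)$ for every $w$ and every collection $S\subseteq2^{[k]}$. The stated inequalities are forced because $\l\alpha_s(x,\vec b)\rr\sqsubseteq\l(\exists u\,\alpha_s)(\vec b)\rr$. Conversely, given such a matrix, one uses that the event algebra of any model of $T^R$ is atomless, together with a standard transportation (deficiency Hall) lemma for atomless measure algebras, to carve inside each $\sa C_w$ a partition into events $\Omega_{s,w}$ of measure $p_{s,w}$ with $\Omega_{s,w}\sqsubseteq\l(\exists u\,\alpha_s)(\vec b)\rr$; one then glues these up into a single $x\in\cu K$ agreeing on each $\Omega_{s,w}$ with a perfect witness for $(\exists u\,\alpha_s)(\vec b)$. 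The gluing is itself a consequence of Fullness~(1): writing $\Omega_{s,w}\doteq\l c_{s,w}=d_{s,w}\rr$ by Fullness~(2), a perfect witness for the always-satisfiable first-order formula $\bigvee_{s,w}(c_{s,w}=d_{s,w}\wedge u=a_s)$ is the desired $x$.

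Finally, each capacity $\mu\bigl(\l(\exists u\bigvee_{s\in S}\alpha_s)(\vec b)\rr\sqcap\sa C_w\bigr)$ is $\mu$ of a $\BB$-term built from the first-order events $\l(\exists u\bigvee_{s\in S}\alpha_s)(\vec b)\rr$ and the $\sa B_j$'s, hence is a quantifier-free term in $(\vec b,\bar{\sa B})$; and by the claim $\sup_x\Phi_0$ equals the maximum of the fixed continuous function $g$ over the corresponding product of transportation polytopes, which is nonempty since any $a\in\cu K$ already gives an achievable matrix. As transportation polytopes depend Hausdorff-continuously on their capacities, this maximum is a continuous function $H$ of those quantifier-free capacity-terms, so $\sup_x\Phi_0\equiv H(\dots)$ is quantifier-free, and $\inf$ is handled identically with $\min$. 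This closes the induction. The main obstacle is the exact description of the range of $(p_{s,w})$ — the converse inclusion — which is where atomlessness, the transportation lemma, the gluing lemma, and Fullness all have to be combined; the subsequent continuity of $H$, i.e.\ Hausdorff-continuity of transportation polytopes in their margins, is routine but must be checked with some care.
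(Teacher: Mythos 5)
The paper does not actually prove this Fact: it is imported verbatim from [BK] (Theorem 2.9), so there is no in-paper argument to compare against. Judged on its own, your elimination scheme is essentially correct, and at its technical core it does the same work that the proof in [BK] packages differently (there, one shows that quantifier-free types determine types and invokes the semantic quantifier-elimination criterion, rather than writing down the eliminated formula explicitly). Your routine reductions are fine: atomic formulas normalize to $\mu$ of Boolean terms by Definition \ref{d-nice} (3), (6), (7), and Fullness (2) correctly trades a $\BB$-quantifier for two $\BK$-quantifiers. The heart of the matter is exactly where you locate it: describing the achievable joint distributions $\bigl(\mu(\l\alpha_s(x,\vec b)\rr\sqcap\sa C_w)\bigr)$ as $x$ varies. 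Your polytope description is right; sufficiency follows from the deficiency form of Hall's theorem applied to the finitely many atoms of the algebra generated by the sets $\l(\exists u\,\alpha_s)(\vec b)\rr\sqcap\sa C_w$ (max-flow--min-cut gives the fractional matching, atomlessness splits the atoms), and the gluing is an iterated Lemma \ref{l-glue}. The step you rightly flag --- Hausdorff continuity of the polytopes in their margins --- is true here because each $S\mapsto\mu\bigl(\l(\exists u\bigvee_{s\in S}\alpha_s)(\vec b)\rr\sqcap\sa C_w\bigr)$ is a monotone submodular capacity, so the feasible set is a product of polymatroid base polytopes, whose support functions (Lov\'asz extensions) are Lipschitz in the capacity; but you could avoid this geometry entirely: once the value of $\sup_x\Phi_0$ is known to be a function of finitely many quantifier-free atomic values and is a uniformly continuous predicate, the standard fact that a predicate factoring through quantifier-free types is a quantifier-free definable predicate finishes the argument. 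The only fine print worth recording is that your output connective $H$ is an arbitrary continuous function, so the conclusion is literally about formulas with arbitrary continuous connectives (equivalently, definable predicates), which is the sense in which Fact \ref{f-qe} is used in this paper.
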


\begin{lemma}  \label{l-glue}
Let $\cu P=(\cu K,\cu B)$ be a pre-complete model of $T^R$ and let $a ,b \in\cu K$ and ${\sa B}\in\cu B$.
Then there is an element $c \in\cu K$ that agrees with $a $ on ${\sa B}$ and agrees with $b $ on $\neg{\sa B}$,
that is, ${\sa B}\sqsubseteq\l c =a \rr$ and $(\neg{\sa B})\sqsubseteq\l c =b \rr$.
\end{lemma}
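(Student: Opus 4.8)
The plan is to realize $c$ as a witness to an existential formula and use Fullness (Fact \ref{f-perfectwitnesses}(1)). First I would set up the first-order side. Introduce a fresh variable $u$ and consider the formula $\theta(u,x,y,z,w)$ asserting ``($z=w$ and $u=x$) or ($z\ne w$ and $u=y$)''; here the pair $(z,w)$ will be used to encode the event $\sa B$ via Fullness(2), and $x,y$ will be instantiated by $a,b$. Concretely, by Fact \ref{f-perfectwitnesses}(2) choose $e,f\in\cu K$ with $\sa B\doteq\l e=f\rr$. Then $\l\theta(u,a,b,e,f)\rr$ is, pointwise, the set where either $\l e=f\rr$ holds and $u=a$, or $\l e\ne f\rr$ holds and $u=b$. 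In the language $L^R$ this unwinds to
\[
\l\theta(u,a,b,e,f)\rr \doteq \bigl(\l e=f\rr\sqcap\l u=a\rr\bigr)\sqcup\bigl(\neg\l e=f\rr\sqcap\l u=b\rr\bigr).
\]
Note that $T\models(\forall x)(\forall y)(\forall z)(\forall w)(\exists u)\,\theta(u,x,y,z,w)$, since in any model of $T$ one can always take $u$ to be $x$ or $y$ according to whether $z=w$; hence $\l(\exists u\,\theta)(a,b,e,f)\rr\doteq\top$.

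Next I would apply Fullness(1) to the formula $\theta$ and the tuple $(a,b,e,f)$ to obtain $c\in\cu K$ with $\l\theta(c,a,b,e,f)\rr\doteq\l(\exists u\,\theta)(a,b,e,f)\rr\doteq\top$. It remains to read off the two containments. On the event $\sa B$, i.e.\ on $\l e=f\rr$, the disjunct $\neg\l e=f\rr\sqcap\l c=b\rr$ contributes nothing, so $\sa B\sqsubseteq \l e=f\rr\sqcap\l c=a\rr\sqsubseteq\l c=a\rr$; symmetrically $\neg\sa B\doteq\neg\l e=f\rr\sqsubseteq\l c=b\rr$. These are exactly the assertions $\sa B\sqsubseteq\l c=a\rr$ and $(\neg\sa B)\sqsubseteq\l c=b\rr$ required.

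The only subtle point is the passage from the pointwise description of $\theta$ in a nice randomization to the Boolean identity displayed above as a consequence of the axioms of $T^R$; this is where one uses that the function symbols $\l\varphi(\cdot)\rr$ commute with the first-order connectives up to $T^R$-equivalence (a standard consequence of the axioms of [BK], and in the completion it suffices to verify it in nice randomizations, where it is immediate from Definition \ref{d-nice}(3)). Once that identity is in hand, the argument is purely formal manipulation of $\sqcap,\sqcup,\neg,\sqsubseteq$ in the Boolean algebra $\cu B$, using only that $\sa B\sqcap\neg\sa B\doteq\bot$ and $\sa B\sqcup\neg\sa B\doteq\top$. I expect no real obstacle beyond bookkeeping; the main thing to be careful about is that pre-completeness of $\cu P$ is what licenses the use of Fullness in the first place.
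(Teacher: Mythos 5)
Your proposal is correct and follows essentially the same route as the paper's proof: encode $\sa B$ as $\l e=f\rr$ via Fullness (2), observe that the first-order ``if-then-else'' statement is a valid existential, and apply Fullness (1) to extract the witness $c$. The only cosmetic difference is that you phrase the defining formula disjunctively while the paper uses a conjunction of implications (which makes reading off the two containments slightly more immediate), but the argument is the same.
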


\begin{df}  In Lemma \ref{l-glue}, we will call $c$ a \emph{characteristic function of $\sa B$ with respect to $a,b$}.
\end{df}

Note that the distance between any two characteristic functions of an event $\sa B$ with respect to elements $a,b$ is zero.
In particular, in a model of $T^R$, the characteristic function is unique.

\begin{proof}[Proof of Lemma \ref{l-glue}]
By Fact \ref{f-perfectwitnesses} (2), there exist $d,e\in\cu K$ such that ${\sa B}\doteq\l d=e\rr$.  The first order sentence
$$(\forall u)(\forall v)(\forall x)(\forall y)(\exists z)[(x=y\rightarrow z=u)\wedge( x\neq y\rightarrow z=v)]$$
is logically valid, so we must have
$$\l (\exists z)[(d=e\rightarrow z=a)\wedge (d\ne e\rightarrow z=b)]\rr\doteq \top.$$
By Fact \ref{f-perfectwitnesses} (1) there exists $c\in\cu K$ such that
$$\l d=e\rightarrow c=a\rr\doteq\top,\quad \l d\ne e\rightarrow c=b\rr\doteq \top,$$
so $\l d=e\rr\sqsubseteq \l c=a\rr$ and $\l d\ne e\rr\sqsubseteq\l c=b\rr$.
\end{proof}

We will need the following result, which is a consequence of Theorem 3.11 of [Be].  Since the setting in [Be] is quite different
from the present paper, we give a direct proof here.

\begin{prop}  \label{p-representation}  Every model of $T^R$ is isomorphic to the reduction of a nice randomization of a model of $T$.
\end{prop}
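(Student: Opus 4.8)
The plan is to represent the given model $\cu N=(\hat{\cu K},\hat{\cu B})$ of $T^R$ concretely over the Stone space of its event algebra. First I would observe that $(\hat{\cu B},d_\BB,\mu)$ is an atomless probability measure algebra: it is a Boolean algebra with a strictly positive finitely additive probability $\mu$ whose metric $d_\BB(\sa B,\sa C)=\mu(\sa B\triangle\sa C)$ is complete, because the event sort of a structure is a complete metric space, and this forces $\hat{\cu B}$ to be Dedekind complete with $\mu$ countably additive; atomlessness is part of what $T^R$ asserts. Let $\Omega$ be the Stone space of $\hat{\cu B}$, write $\widehat{\sa B}\subseteq\Omega$ for the clopen set corresponding to $\sa B\in\hat{\cu B}$, and let $\mu$ (same name) be the Carath\'eodory extension, to the $\sigma$-algebra generated by the clopen sets, of the set function $\widehat{\sa B}\mapsto\mu(\sa B)$; this set function is a premeasure because $\Omega$ is compact, so a countable disjoint cover of a clopen set by clopen sets is finite. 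Using Dedekind completeness of $\hat{\cu B}$ together with continuity of $\mu$, every set of the generated $\sigma$-algebra is, off a $\mu$-null set, a unique clopen set, so $\sa B\mapsto\widehat{\sa B}$ induces an isomorphism of $\hat{\cu B}$ onto the measure algebra of $(\Omega,\cu B,\mu)$, where $\cu B$ now denotes the completion of that $\sigma$-algebra; the space $(\Omega,\cu B,\mu)$ is atomless because $\hat{\cu B}$ is.

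Second I would build the underlying first-order model. For $\omega\in\Omega$, viewed as an ultrafilter on $\hat{\cu B}$, let $\cu M_\omega$ be the $L$-structure with universe $\hat{\cu K}/{\approx_\omega}$, where $a\approx_\omega b$ iff $\omega\in\widehat{\l a=b\rr}$, with $R^{\cu M_\omega}([\vec a])$ iff $\omega\in\widehat{\l R(\vec a)\rr}$, and with function and constant symbols interpreted using witnesses supplied by Fullness (Fact \ref{f-perfectwitnesses}); that $\approx_\omega$ is a congruence and these interpretations are well defined follows because $T^R$ forces the $\l\cdot\rr$ operations to be Boolean homomorphisms sending $\l\varphi\rr$ to $\top$ whenever $T\models\varphi$. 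Writing $a(\omega)$ for $[a]_{\approx_\omega}$, an induction on $L$-formulas — the quantifier step using Fullness — gives $\widehat{\l\psi(\vec a)\rr}=\{\omega:\cu M_\omega\models\psi(\vec a(\omega))\}$; in particular each $\cu M_\omega$ models $T$. As each $\cu M_\omega$ has size at most $|\hat{\cu K}|$, I would fix a sufficiently saturated $\cu M\models T$ and, for each $\omega$ separately, an elementary embedding $j_\omega\colon\cu M_\omega\to\cu M$, and redefine $a(\omega):=j_\omega([a]_{\approx_\omega})\in M$; by elementarity the displayed identity persists with $\cu M$ in place of $\cu M_\omega$.

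Third I would enlarge the random-element sort so that the event sort becomes a genuine $\sigma$-algebra. Let $\cu K$ consist of all functions $a\colon\Omega\to M$ that agree, off a $\mu$-null set, with $\omega\mapsto j_\omega([b]_{\approx_\omega})$ for some $b\in\hat{\cu K}$. By Corollary \ref{c-two} fix $c,d\in\hat{\cu K}$ with $\l c\ne d\rr=\top$, so the corresponding functions $\bar c,\bar d$ satisfy $\bar c(\omega)\ne\bar d(\omega)$ for every $\omega$; then for any $\sa S\in\cu B$ the function equal to $\bar c$ on $\sa S$ and to $\bar d$ off $\sa S$ exhibits $\sa S$ as a definable event, and Lemma \ref{l-glue}, applied to the element of $\hat{\cu B}$ whose clopen set differs from $\sa S$ by a null set, shows this function belongs to $\cu K$; hence $\cu B$ is precisely the set of definable events. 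The remaining clauses of Definition \ref{d-nice} are then checked directly: clause (5) from Fullness in $\cu N$ plus an arbitrary pointwise correction on a null set, clauses (6) and (7) from the definitions of $\mu$ and the metrics, clause (3) from the displayed identity together with the completeness of $\cu B$, and pre-completeness from the isomorphism below, since $\cu N$ is complete. Finally, sending $a\in\cu K$ to the unique $b\in\hat{\cu K}$ with which it agrees off a null set (unique by strict positivity of $\mu$ on $\hat{\cu B}$) and sending $\sa S\in\cu B$ to the $\sa B\in\hat{\cu B}$ whose clopen set differs from $\sa S$ by a null set gives a surjection that is isometric on each sort and respects $\l\varphi(\cdot)\rr$, $\mu$ and the Boolean operations, hence an isomorphism from the reduction of $(\cu K,\cu B)$ onto $\cu N$.

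The main obstacle is the tension built into the third step: clause (4) of Definition \ref{d-nice} forces $\cu K$ to contain a representative of every measurable event, while the reduction must come out equal to $\cu N$, so the extra random elements must be redundant modulo null sets — and this redundancy is exactly what the characteristic functions of Lemma \ref{l-glue} supply. One must also be careful that the functions produced this way become measurable only after completing the measure, and that the witnesses demanded in clause (5) (and in Fullness) can be produced on the nose rather than merely almost everywhere.
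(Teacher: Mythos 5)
Your proposal is correct and follows essentially the same route as the paper: the Stone space of $\hat{\cu B}$ with a Carath\'eodory extension for the event sort, a model $\cu M_\omega$ attached to each ultrafilter $\omega$ and elementarily embedded into a single sufficiently saturated $\cu M\models T$, the random-element sort realized as functions $\Omega\to M$ modulo null sets, and the characteristic functions of Lemma \ref{l-glue} to recover clause (4) of Definition \ref{d-nice}. The only local difference is that you build $\cu M_\omega$ canonically as a quotient of $\hat{\cu K}$ and prove by induction on formulas (using Fullness at the quantifier step) that $\l\psi(\vec a)\rr=\{\omega:\cu M_\omega\models\psi(\vec a(\omega))\}$, where the paper instead applies compactness and L\"owenheim--Skolem to the consistent theory $U(\omega)=\{\psi(\vec{\bo b}):\omega\in\l\psi(\vec{\bo b})\rr\}$; both yield the same pointwise models.
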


\begin{proof}  Let $\cu N=(\hat{\cu K},\hat{\cu B})$ be a model of $T^R$ of cardinality $\kappa$.
Let $\Omega$ be the Stone space of the Boolean algebra $\hat{\cu B}=(\hat{\cu B},\top,\bot,\sqcap, \sqcup,\neg)$.  Thus $\Omega$ is a compact topological
space, the points of $\Omega$ are ultrafilters, we may identify $\hat{\cu B}$ with the Boolean algebra of clopen sets of $\Omega$, and $\mu^\cu N$ is a
finitely additive probability measure on $\hat{\cu B}$.

We next show that $\mu$ is $\sigma$-additive on $\hat{\cu B}$.  To do this, we assume that ${\sa A}_0\supseteq{\sa A}_1\supseteq\cdots$ in $\hat{\cu B}$
and ${\sa C} =\bigcap_n{\sa A}_n\in\hat{\cu B}$, and prove that $\mu(\sa C)=\lim_{n\to\infty}\mu({\sa A}_n)$.
Indeed, the family $\{\sa C\cup(\Omega\setminus{\sa A}_n)\colon n\in\BN\}$ is an open covering of $\Omega$, so by the topological compactness of $\Omega$,
we have $\Omega=\bigcup_{k=0}^n (\sa C\cup(\Omega\setminus{\sa A}_k))$ for some $n\in\BN$.  Then $\sa C={\sa A}_n$, so
$\mu(\sa C)=\mu({\sa A}_n)=\lim_{n\to\infty}\mu({\sa A}_n)$.

By the Caratheodory theorem,
there is a complete probability space $(\Omega,\cu B,\mu)$ such that $\cu B\supseteq\hat{\cu B}$,
$\mu$ agrees with $\mu^\cu N$ on $\hat{\cu B}$, and for each ${\sa B}\in\cu B$ and $m>0$ there is a countable sequence
${\sa A}_{m0}\subseteq {\sa A}_{m1}\subseteq\cdots$ in $\hat{\cu B}$ such that
\begin{equation}  \label{eq-rep}
B\subseteq \bigcup_n {\sa A}_{mn} \mbox{ and } \mu\left(\bigcup_n {\sa A}_{mn}\right)\le\mu({\sa B})+1/m.
\end{equation}
Note that since the probability space $(\Omega,\cu B,\mu)$ is complete, every subset of $\Omega$ that contains a set in $\cu B$
of measure one also belongs to $\cu B$ and has measure one.

We claim that for each $\sa B\in\cu B$ there is a unique event $f(\sa B)\in\hat{\cu B}$ such that $\mu(f(\sa B)\triangle{\sa B})=0$.
The uniqueness of $f(\sa B)$ follows from the fact that the distance function
$d_\BB(\sa C,\sa D)=\mu(\sa C\triangle\sa D)$ is a metric on $\hat{\cu B}$.   To show the existence of $f(\sa B)$, for each
$m>0$ let ${\sa A}_{m0}\subseteq {\sa A}_{m1}\subseteq\cdots$  be as in (\ref{eq-rep}).  Note that
$({\sa A}_{m0},{\sa A}_{m1},\ldots)$ is a Cauchy sequence of events in the model $\cu N$, so there is an event ${\sa C}_m\in\hat{\cu B}$ such that
${\sa C}_m=\lim_{n\to\infty} {\sa A}_{mn}$.  Hence $\lim_{n\to\infty}\mu({\sa A}_{mn}\triangle {\sa C}_m)=0$, so
$\mu((\bigcup_n {\sa A}_{mn})\triangle {\sa C}_m)=0$.
Then $({\sa C}_1,{\sa C}_2,\ldots)$ is a Cauchy sequence, so there is an event $f(\sa B)=\lim_{m\to\infty} {\sa C}_m$ in $\hat{\cu B}$
with $\mu(f(\sa B)\triangle{\sa B})=0$.

We make some observations about the mapping $f\colon\cu B\to\hat{\cu B}$. If $\sa B, \sa C\in\cu B$ and $d_\BB(\sa B,\sa C)=0$, then $f(\sa B)=f(\sa C)$.
For each $\sa B, \sa C\in\cu B$, we have
$$f(\sa B\cup\sa C)=f(\sa B)\cup f(\sa C),\qquad  f(\sa B\cap\sa C)=f(\sa B)\cap f(\sa C),$$
$$\Omega\setminus f(\sa B)=f(\Omega\setminus\sa B), \qquad \mu(\sa B)=\mu(f(\sa B)).$$
Moreover, the mapping $f$ sends $\cu B$ onto $\hat{\cu B}$, because if $\sa C\in\hat{\cu B}$ then $\sa C\in\cu B$ and $f(\sa C)=\sa C$.
Therefore the mapping $\hat f$ that sends the equivalence class of each $\sa B\in\cu B$ under $d_\BB$ to $f(\sa B)$ is well defined and
is an isomorphism from the reduction of the pre-structure $(\cu B,\sqcup,\sqcap,\neg.\top,\bot,\mu)$ onto the measured algebra
$(\hat{\cu B},\sqcup,\sqcap,\neg.\top,\bot,\mu)$.

A model $\cu M$ of $T$ is
\emph{$\kappa^+$-universal} if every model of $T$ of cardinality $\le\kappa$ is elementarily embeddable in $\cu M$.  By Theorem 5.1.12 in [CK],
every $\kappa$-saturated model of $T$ is $\kappa^+$-universal, so $\kappa^+$-universal models  of $T$ exist.
We now assume that $\cu M$ is a $\kappa^+$-universal model of $T$, and prove that $\cu N$ is isomorphic to the reduction of
a nice randomization of $\cu M$ with the underlying probability space $(\Omega,\cu B,\mu)$.

In the following paragraphs, we will use boldface letters $\bo b,\bo d,\ldots$ for elements of $\hat{\cu K}$.
Let $L_{\hat{\cu K}}$ be the first order signature formed by adding a constant symbol for each element $\bo b\in\hat{\cu K}$.
For each $\omega\in\Omega$, the set of $L_{\hat{\cu K}}$-sentences
$$ U(\omega)=\{\psi(\vec{\bo b})\colon \omega\in\l\psi(\vec{\bo b})\rr\}$$
is consistent with $T$ and has cardinality $\le\kappa$. By the  Compactness and L\"{o}wenheim-Skolem theorems, each $U(\omega)$ has a model
$(\cu M_\omega,{\bo b}_\omega)_{\bo b\in\hat{\cu K}}$ of cardinality $\le\kappa$.  Since $\cu M$ is $\kappa^+$-universal,
for each $\omega\in\Omega$ we may choose an
elementary embedding $h_\omega\colon\cu M_\omega\prec\cu M$. Then $(\cu M,h_\omega({\bo b}_\omega))_{\bo b\in\hat{\cu K}}\models U(\omega)$ for every $\omega\in\Omega$.  It follows that for each formula $\psi(\vec v)$ of $L$ and each tuple $\vec {\bo b}\in\hat{\cu K}^{<\BN}$,
$$ \l\psi(\vec{\bo b})\rr=\{\omega\in\Omega\colon\cu M_\omega\models\psi(\vec {\bo b}_\omega)\}=
\{\omega\in\Omega\colon \cu M\models\psi(h_\omega(\vec {\bo b}_\omega))\}\in\hat{\cu B}.$$
For each formula $\psi(\vec v)$ of $L$ and tuple $\vec c$ of functions in $M^\Omega$, define
$$ \l\psi(\vec c)\rr:=\{\omega\in\Omega\colon \cu M\models\psi(\vec c(\omega))\}.$$

Let $\cu K$ be the set of all functions $a\colon\Omega\to M$ such that for some element $\bo b\in\hat{\cu K}$, we have
$$\mu(\{\omega\in\Omega\colon a(\omega)=h_\omega({\bo b}_\omega)\})=1.$$
We claim that for each $a\in\cu K$ there is a unique element $f(a)\in\hat{\cu K}$ such that
$$\mu(\{\omega\in\Omega\colon a(\omega)=h_\omega(f(a)_\omega)\})=1.$$
The existence of $f(a)$ is guaranteed by the definition of $\cu K$. To prove uniqueness, suppose $\bo b, \bo d\in\hat{\cu K}$ and
$$\mu(\{\omega\in\Omega\colon a(\omega)=h_\omega({\bo b}_\omega)\})=\mu(\{\omega\in\Omega\colon a(\omega)=h_\omega({\bo d}_\omega)\})=1.$$
Then
$$\mu(\{\omega\in\Omega\colon h_\omega({\bo b}_\omega)=h_\omega({\bo d}_\omega) \})=1,$$
so
$$\mu(\l\bo b=\bo d\rr)=\mu(\{\omega\in\Omega\colon {\bo b}_\omega={\bo d}_\omega\})=1,$$
and hence $d_\BK(\bo b,\bo d)=0$.  Since $d_\BK$ is a metric on $\hat{\cu K}$, it follows that $\bo b=\bo d$.

We now make some observations about the mapping $f\colon\cu K\to\hat{\cu K}$.  This mapping sends $\cu K$ onto $\hat{\cu K}$, because for each $\bo b\in\hat{\cu K}$,
we have $f(a)=\bo b$ where $a$ is the element of $\cu K$ such that $a(\omega)=h_\omega({\bo b}_\omega)$ for all $\omega\in\Omega$.
Suppose $\vec c\in{\cu K}^{<\BN}$ and $\vec {\bo d}=f(\vec c)$.  We have $\vec {\bo d}\in\hat{\cu K}^{<\BN}$ and
$$\l\psi(\vec {\bo d})\rr=\{\omega\in\Omega\colon \cu M\models\psi(h_\omega(\vec {\bo d}_\omega))\}\doteq
\{\omega\in\Omega\colon \cu M\models\psi(\vec c(\omega))\}=\l\psi(\vec c)\rr.$$
Since the probability space $(\Omega,\cu B,\mu)$ is complete, $\l\psi(\vec {\bo d})\rr\in\hat{\cu B}\subseteq\cu B$, and
$\l\psi(\vec {\bo d})\rr\doteq\l\psi(\vec c)\rr$, we have  $\l\psi(\vec c)\rr\in\cu B$ and $\l\psi(\vec {\bo d})\rr=f(\l\psi(\vec c)\rr)$.
Therefore, if $a,c\in\cu K$ and $d_\BK(a,c)=0$, then $d_\BK(f(a),f(c))=0$, and hence $f(a)=f(c)$.
This shows that $\cu P=(\cu K,\cu B)$ is a well-defined pre-complete structure for $L^R$, and that the mapping $\hat f$ that sends the equivalence class of
each ${\sa B}\in\cu B$ to $f(\sa B)$, and the equivalence class of each $a\in\cu K$ to $f(a)$, is an isomorphism from the reduction of $\cu P$ to $\cu N$.

It remains to show that $\cu P$ is a nice randomization of $\cu M$.  It is clear that $\cu P$ satisfies conditions (1)-(3) in
Definition \ref{d-nice}.

Proof of (4):   We have already shown that $\l\psi(\vec c)\rr\in\cu B$
for each  formula $\psi(\vec v)$ of $L$ and each tuple $\vec c$ in $\cu K$.  For the other direction, let $\sa B\in\cu B$.
By Corollary \ref{c-two}, there exist $a,e\in\cu K$ such that
$\l a\ne e\rr\doteq\Omega$.  We may choose a function $b\in M^\Omega$ such that $b(\omega)=e(\omega)$ whenever $a(\omega)\ne e(\omega)$,
and $b(\omega)\ne a(\omega)$ for all $\omega\in\Omega$.  Then $b\in\cu K$ and $\l a\ne b\rr=\Omega$.
By Lemma \ref{l-glue}, there exists $c\in\cu K$ which is a characteristic function of $\sa B$ with
respect to $a,b$.  Then $\l c=a\rr\doteq\sa B$.  Let $d\in M^\Omega$ be the function such that $d(\omega)=a(\omega)$ for $\omega\in\sa B$,
and $d(\omega)=b(\omega)$ for $\omega\in\neg\sa B$.  Then $\mu(\l c=d\rr)=1$, so $d\in\cu K$, and $\l a=d\rr=\sa B$.  Thus (4) holds with
$\psi$ being the sentence $a=d$.

Proof of (5):  Consider a formula $\theta(u,\vec v)$ of $L$ and a tuple $\vec b$ in $\cu K$.  By Fullness,
there exists $c\in\cu K$ such that
$$\l\theta(c,\vec b)\rr\doteq\l(\exists u)\theta(u,\vec b)\rr.$$
We may choose a function $a\in M^\Omega$ such that for all $\omega\in\Omega$,
$$\cu M\models [\theta(c(\omega),\vec b(\omega))\leftrightarrow (\exists u)\theta(u,\vec b)] \mbox{ implies } a(\omega)=c(\omega),$$
and
$$\cu M\models [(\exists u)\theta(u,\vec b(\omega))\rightarrow\theta(a(\omega),\vec b(\omega))].$$
Then $\mu(\l a=c\rr)=1$, so $a\in\cu K$ and
$$\l\theta(a,\vec b)\rr=\l(\exists u)\theta(u,\vec b)\rr,$$
as required.

Proof of (6) and (7):  By Fact \ref{f-T^R}, the properties
$$ (\forall x)(\forall y) d_\BK(x,y)=\mu(\l x\ne y\rr),\quad (\forall \sa U)(\forall \sa V)d_\BB(\sa U,\sa V)=\mu(\sa U\triangle\sa V)$$
hold in some model of $T^R$.  By Fact \ref{f-complete}, these properties hold in all models of $T^R$, and thus in $\cu N$.
Therefore (6) and (7) hold for $\cu P$.
\end{proof}

\subsection{Types and Definability}

For a first order structure $\cu M$ and a set $A$ of elements of $\cu M$, $\cu M_A$ denotes the structure formed by
adding a new constant symbol to $\cu M$ for each $a\in A$.
The \emph{type realized by} a tuple $\vec b$ over the parameter set $A$ in $\cu M$ is the set $\tp^\cu M(\vec b/A)$ of formulas
$\varphi(\vec u,{\vec a})$
with $\vec a\in A^{<\BN}$ satisfied by $\vec b$ in $\cu M_A$.  We call $\tp^{\cu M}(\vec b/A)$ an \emph{$n$-type} if $n=|\vec b|$.

In the following, let $\cu N$ be a continuous structure and let $ A$ be a set of elements of $\cu N$.  $\cu N_{ A}$ denotes the structure formed by
adding a new constant symbol to $\cu N$ for each $ a\in  A$.
The \emph{type} $\tp^\cu N(\vec{ b}/ A)$ \emph{realized} by $\vec { b}$ over the parameter set $ A$ in $\cu N$
is the function $p$ from formulas to $[0,1]$ such that
for each formula $\Phi(\vec{x},\vec { a})$ with $\vec{ a}\in { A}^{<\BN}$, we have
$\Phi(\vec{x},\vec { a})^p=\Phi(\vec { b},\vec { a})^\cu N$.

We now recall the notions of definable element and algebraic element from [BBHU].
An element ${ b}$ is \emph{definable over} $ A$ in $\cu N$, in symbols $ b\in\dcl^\cu N( A)$,
if there is a sequence of formulas $\langle\Phi_k(x,\vec{ a}_k)\rangle$ with $\vec{ a}_k\in{ A}^{<\BN}$ such
that the sequence of functions $\langle\Phi_k(x,\vec{ a}_k)^\cu N\rangle$ converges uniformly in $x$ to the distance function $d(x, b)^\cu N$
of the corresponding sort.
$ b$ is \emph{algebraic over $ A$} in $\cu N$, in symbols $ b\in\acl^\cu N( A)$, if there is a compact set $C$ and
a sequence of formulas $\langle\Phi_k(x,\vec { a}_k)\rangle$ with $\vec{ a}_k\in{ A}^{<\BN}$ such
that $b\in C$ and the sequence of functions $\langle\Phi_k(x,\vec{ a}_k)^\cu N\rangle$  converges uniformly in $x$ to the distance function $d(x,C)^\cu N$
of the corresponding sort.

If the structure $\cu N$ is clear from the context, we will sometimes drop the superscript and write $\tp, \dcl, \acl$ instead of $\tp^\cu N, \dcl^\cu N, \acl^\cu N$.

\begin{fact}  \label{f-definable} ([BBHU], Exercises 10.7 and 10.10)  For each element $ b$ of $\cu N$,
the following are equivalent, where $p=\tp^\cu N( b/ A)$:
\begin{enumerate}
\item $ b$ is definable over $ A$ in $\cu N$;
\item in each model $\cu N'\succ\cu N$, $ b$ is the a unique element that realizes $p$ over $ A$;
\item $ b$ is definable over some countable subset of $ A$ in $\cu N$.
\end{enumerate}
\end{fact}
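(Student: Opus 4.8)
The plan is to establish $(1)\Leftrightarrow(3)$ and $(1)\Leftrightarrow(2)$. The equivalence $(1)\Leftrightarrow(3)$ is bookkeeping: $(3)\Rightarrow(1)$ holds because a sequence of formulas over a countable subset of $A$ is in particular a sequence of formulas over $A$, and for $(1)\Rightarrow(3)$ one takes a witnessing sequence $\langle\Phi_k(x,\vec a_k)\rangle$ with $\vec a_k\in A^{<\BN}$, lets $A_0$ be the (countable) set of all entries of all the tuples $\vec a_k$, and notes that the same sequence witnesses $b\in\dcl^{\cu N}(A_0)$.

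For $(1)\Rightarrow(2)$, fix $\cu N'\succ\cu N$ and suppose $b'$ realizes $p$ over $A$ in $\cu N'$; since $\cu N\prec\cu N'$ we automatically have $\tp^{\cu N'}(b/A)=\tp^{\cu N}(b/A)=p$, so it remains only to show $d(b',b)^{\cu N'}=0$. Fix a sequence $\langle\Phi_k(x,\vec a_k)\rangle$ witnessing $(1)$. On one hand, because $\tp^{\cu N'}(b'/A)=p$ and the parameters $\vec a_k$ lie in $A$, we get $\Phi_k(b',\vec a_k)^{\cu N'}=\Phi_k(b,\vec a_k)^{\cu N}$, which tends to $d(b,b)^{\cu N}=0$ as $k\to\infty$. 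On the other hand, given $\epsilon>0$ choose $K$ so that $\sup_x|\Phi_k(x,\vec a_k)-d(x,b)|\le\epsilon$ in $\cu N$ for $k\ge K$; this is a closed condition with parameters in $A\cup\{b\}\subseteq\cu N$, so it also holds in $\cu N'$, and in particular $|\Phi_k(b',\vec a_k)^{\cu N'}-d(b',b)^{\cu N'}|\le\epsilon$ for $k\ge K$. Letting $k\to\infty$ gives $d(b',b)^{\cu N'}\le\epsilon$, and since $\epsilon$ was arbitrary, $b'=b$.

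For $(2)\Rightarrow(1)$ it suffices to produce, for each $\epsilon>0$, a single formula $\Phi_\epsilon(x,\vec a_\epsilon)$ with $\vec a_\epsilon\in A^{<\BN}$ and $\sup_x|\Phi_\epsilon(x,\vec a_\epsilon)-d(x,b)|\le\epsilon$ in $\cu N$, since then $\langle\Phi_{1/k}\rangle_{k\ge1}$ witnesses $(1)$. Fix $\epsilon>0$ and work in a $\kappa$-saturated $\cu N^*\succ\cu N$ with $\kappa>|A|+\aleph_0$. By $(2)$, $b$ is the unique realization of $p$ in $\cu N^*$, so the set of conditions expressing ``$\tp(x/A)=\tp(y/A)=p$ and $d(x,y)\ge\epsilon$'' (which has at most $|A|+\aleph_0<\kappa$ members) is unsatisfiable in $\cu N^*$, hence inconsistent with $\Th(\cu N_A)$. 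By continuous compactness there is then a single formula $\psi_\epsilon(x,\vec a_\epsilon)$ — a finite $\max$ of conditions from $p$, so that $\psi_\epsilon(b,\vec a_\epsilon)^{\cu N^*}=0$ — and a $\delta>0$ such that, in every model of $\Th(\cu N_A)$, the inequalities $\psi_\epsilon(x,\vec a_\epsilon)\le\delta$ and $\psi_\epsilon(y,\vec a_\epsilon)\le\delta$ force $d(x,y)<\epsilon$. Put $\chi_\epsilon(y):=\min(1,2^m\psi_\epsilon(y,\vec a_\epsilon))$, where $2^m\ge 1/\delta$ and $t\mapsto\min(1,2^mt)$ is obtained by iterating the admissible connective $t\mapsto\min(1,2t)$, and set
$$\Phi_\epsilon(x,\vec a_\epsilon):=\inf_y\,\max\bigl(d(x,y),\,\chi_\epsilon(y)\bigr),$$
an $L^R$-formula with free variable $x$ and parameters $\vec a_\epsilon$. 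Evaluating in $\cu N^*$, the infimand at $y=b$ shows $\Phi_\epsilon(x,\vec a_\epsilon)^{\cu N^*}\le d(x,b)^{\cu N^*}$ for all $x$ (and in particular $\Phi_\epsilon(b,\vec a_\epsilon)^{\cu N^*}=0$); conversely, any $y$ nearly attaining the infimum at a given $x$ has $\chi_\epsilon(y)<1$, hence $\psi_\epsilon(y,\vec a_\epsilon)<\delta$, hence $d(y,b)^{\cu N^*}<\epsilon$, whence $d(x,b)^{\cu N^*}\le\Phi_\epsilon(x,\vec a_\epsilon)^{\cu N^*}+\epsilon$ (the value $1$ being covered by the previous inequality). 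Thus $\sup_x|\Phi_\epsilon(x,\vec a_\epsilon)-d(x,b)|\le\epsilon$ in $\cu N^*$, and being a closed condition with parameters in $A\cup\{b\}$ it transfers down to $\cu N$.

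I expect the real work — and the only genuine obstacle — to be in the last direction: making the continuous-compactness step precise (passing from ``unsatisfiable set of closed conditions'' to an explicit finite weakening that already fails, with a usable $\delta$), checking that $\psi_\epsilon$, $\chi_\epsilon$, and $\Phi_\epsilon$ are legitimate $L^R$-formulas built only from admissible connectives and the $\inf$ quantifier, and verifying that the $\inf_y$ construction truly squeezes $d(x,b)$ to within $\epsilon$ at every point. The remaining implications are routine: a countable-support observation for $(1)\Rightarrow(3)$, and transfer of closed conditions along $\cu N\prec\cu N'$ for $(1)\Rightarrow(2)$.
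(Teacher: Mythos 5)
The paper does not prove this statement: it is quoted as a Fact with a citation to [BBHU], Exercises 10.7 and 10.10, so there is no in-paper argument to compare against. Your proof is correct and is essentially the standard one from the continuous-logic literature: (1)$\Leftrightarrow$(3) by tracking the countably many parameters actually used; (1)$\Rightarrow$(2) by transferring the closed condition $\sup_x|\Phi_k(x,\vec a_k)\dotminus d(x,b)|\le\epsilon$ along $\cu N\prec\cu N'$; and (2)$\Rightarrow$(1) by the saturation-plus-compactness extraction of a finite, $\delta$-weakened fragment of $p$ that $\epsilon$-isolates $b$, followed by the $\inf_y\max(d(x,y),\chi_\epsilon(y))$ device for approximating the distance predicate. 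The only points worth tightening are bookkeeping ones you already flag: the passage from ``unsatisfiable in $\cu N^*$'' to ``inconsistent with $\Th(\cu N_A)$'' uses $\kappa$-saturation for the $2$-type in $(x,y)$, and the endpoint case $\Phi_\epsilon(x)=1$ in the squeeze is handled by $\Phi_\epsilon(x)\le d(x,b)\le 1$; neither causes any difficulty.
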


\begin{fact}  \label{f-algebraic} ([BBHU], Exercise 10.8 and 10.11)
For each element $ b$ of $\cu N$,
the following are equivalent, where $p=\tp^\cu N( b/ A)$:
\begin{enumerate}
\item $ b$ is algebraic over $ A$ in $\cu N$;
\item in each model $\cu N'\succ\cu N$, the set of elements $ b$ that realize $p$ over $ A$ in $\cu N'$ is compact.
\item $ b$ is algebraic over some countable subset of $ A$ in $\cu N$.
\end{enumerate}
\end{fact}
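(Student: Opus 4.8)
The plan is to reduce everything to the equivalence of (1) and (2): the equivalence of (1) and (3) is formal, since a witnessing sequence $\langle\Phi_k(x,\vec a_k)\rangle$ for (1) involves only the countably many parameters occurring in the tuples $\vec a_k$, while conversely $\acl^\cu N(\cdot)$ is monotone in the parameter set. Write $p=\tp^\cu N(b/A)$, and for $\cu N'\succ\cu N$ let $Z(\cu N')$ be the set of realizations of $p$ in $\cu N'$; observe that $Z(\cu N')$ is always closed, being an intersection of zero sets of the continuous functions $y\mapsto |\Phi(y,\vec a)^{\cu N'}-\Phi(y,\vec a)^p|$.

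For (1)$\Rightarrow$(2) I would argue that the witnessing compact set already contains $Z(\cu N')$ for every $\cu N'\succ\cu N$. Fix a compact $C\subseteq\cu N$ with $b\in C$ and formulas $\Phi_k(x,\vec a_k)$, $\vec a_k\in A^{<\BN}$, with $\varepsilon_k:=\sup_x|\Phi_k(x,\vec a_k)^\cu N-d(x,C)^\cu N|\to 0$. Since $d(b,C)^\cu N=0$, the condition $\Phi_k(y,\vec a_k)\le\varepsilon_k$ belongs to $p$, so every $b'\in Z(\cu N')$ satisfies $\Phi_k(b',\vec a_k)^{\cu N'}\le\varepsilon_k$. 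Given $\eta>0$, choose a finite $\eta$-net $c_1,\dots,c_m\in C$; then $d(x,C)^\cu N\le\min_{i}d(x,c_i)^\cu N\le d(x,C)^\cu N+\eta$ for all $x$, so the $L^R$-sentence $\sup_x\bigl(\min_i d(x,c_i)\dotminus\Phi_k(x,\vec a_k)\bigr)$ has value $\le\varepsilon_k+\eta$ in $\cu N$, hence also in $\cu N'$ by elementarity. Evaluating at $b'\in Z(\cu N')$ yields $d(b',C)^{\cu N'}\le\min_i d(b',c_i)^{\cu N'}\le 2\varepsilon_k+\eta$ for all $k$ and $\eta$, so $d(b',C)^{\cu N'}=0$ and $b'\in C$. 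Hence $Z(\cu N')\subseteq C$ is a closed subset of a compact set, so compact.

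For (2)$\Rightarrow$(1) I would take $\cu N'\succ\cu N$ to be $|A|^+$-saturated and let $C^\ast=Z(\cu N')$, compact by hypothesis. Since $C^\ast$ is totally bounded, for each $r>0$ there is $k=k(r)$ so that $C^\ast$ has no $(k{+}1)$-element $r$-separated subset; hence the partial type $\bigcup_{i\le k}p(y_i)\cup\{d(y_i,y_j)\ge r:i<j\}$ over $A$ is not realized in the saturated $\cu N'$, so it is not satisfiable. By the compactness theorem for continuous logic there are a finite $p_0\subseteq p$ and an $\varepsilon\in(0,r)$ such that, in every model, the sublevel set $\{y:\psi(y,\vec a)\le\varepsilon\}$ --- where $\psi:=\max_{\chi\in p_0}|\chi(\cdot,\vec a)-\chi^p|$ is a formula over a finite subtuple $\vec a$ of $A$ with $\psi^p=0$ --- contains no $(k{+}1)$-element $(r{-}\varepsilon)$-separated subset, hence is covered by at most $k$ balls of radius $r-\varepsilon$; in particular it is totally bounded. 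Fixing any $\delta\in(0,\varepsilon)$ and setting $C:=\{y\in\cu N:\psi(y,\vec a)^\cu N\le\delta\}$, one gets a closed, totally bounded --- hence compact, as $\cu N$ is complete --- subset of $\cu N$ with $b\in C$ (because $\psi(b)^\cu N=\psi^p=0$). Finally $d(\cdot,C)^\cu N$ is the uniform limit of the $L^R$-formulas over $A$ given by $\Phi_M(x,\vec a):=\inf_y\max\bigl(d(x,y),\,M\cdot(\psi(y,\vec a)\dotminus\delta)\bigr)$: one checks $\Phi_M(x)^\cu N\le d(x,C)^\cu N$ always, and $\Phi_M(x)^\cu N\ge d(x,C)^\cu N-e_M$ with $e_M=\sup\{d(y,C)^\cu N:\psi(y)^\cu N\le\delta+1/M\}$, and $e_M\to 0$ because the compact sets $\{y:\psi(y)^\cu N\le\delta+1/M\}$ decrease to $C$ and a decreasing sequence of compacta converges to its intersection in the Hausdorff metric. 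This gives $b\in\acl^\cu N(A)$, completing the cycle.

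The one genuinely non-formal point I expect is the middle of (2)$\Rightarrow$(1): converting the topological compactness of $Z(\cu N')$ into a single first-order approximation $\psi$ over finitely many parameters whose bounded sublevel set is uniformly totally bounded across all models --- the continuous analogue of the classical fact that if $b\in\acl(A)$ then some formula of $\tp(b/A)$ has only finitely many realizations --- together with the $\dotminus$-and-$\varepsilon$ bookkeeping hidden in the phrase ``some finite approximation fails''. By contrast, the passage from a totally bounded sublevel set to a bona fide definable set is painless here, precisely because Hausdorff convergence of decreasing compacta removes any need to avoid ``jump'' values of $\delta$; and (1)$\Leftrightarrow$(3), the closedness of $Z(\cu N')$, and the implication (1)$\Rightarrow$(2) are routine.
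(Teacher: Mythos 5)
The paper offers no proof of this Fact (it is quoted from [BBHU]), so I am judging your argument on its own terms. Your treatment of (1)$\Leftrightarrow$(3) and of (1)$\Rightarrow$(2) is correct, and the overall architecture of (2)$\Rightarrow$(1) is the standard one. But there is a genuine gap in the middle of (2)$\Rightarrow$(1): from ``the sublevel set $\{y:\psi(y,\vec a)\le\varepsilon\}$ has no $(k{+}1)$-element $(r{-}\varepsilon)$-separated subset, hence is covered by at most $k$ balls of radius $r-\varepsilon$'' you conclude ``in particular it is totally bounded.'' That inference is false: being covered by finitely many balls of one fixed radius says nothing about coverability by finitely many balls of smaller radii. (In the unit ball of an infinite-dimensional Hilbert space, the whole space is one ball of radius $1$ and is nowhere near totally bounded.) Your compactness argument, run for a single $r$, only controls the $(r-\varepsilon)$-separation number of the sublevel set; each of the $k$ covering balls could still contain an infinite $\rho$-separated set for small $\rho$. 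Consequently the set $C=\{y\in\cu N:\psi(y,\vec a)^{\cu N}\le\delta\}$ you define need not be compact, and everything downstream --- the Hausdorff convergence of the sets $K_M=\{y:\psi(y)^{\cu N}\le\delta+1/M\}$ to $C$, hence $e_M\to 0$, hence the uniform approximation of $d(\cdot,C)$ --- collapses with it.

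The repair is standard but changes the shape of the construction: run your compactness argument for each $r_n=1/n$ to obtain formulas $\psi_n(\cdot,\vec a_n)$ and thresholds $\delta_n>0$ such that $\{y:\psi_n(y,\vec a_n)^{\cu N}\le\delta_n\}$ is covered by finitely many balls of radius $1/n$, and set $C:=\bigcap_{n\in\BN}\{y:\psi_n(y,\vec a_n)^{\cu N}\le\delta_n\}$. This $C$ is closed and totally bounded, hence compact by completeness of $\cu N$, and contains $b$. The price is that your final approximation step must be redone for this $C$: the formulas become $\Phi_{M,N}(x):=\inf_y\max\bigl(d(x,y),\,M\cdot\max_{n\le N}(\psi_n(y,\vec a_n)\dotminus\delta_n)\bigr)$, and the estimate $e_{M,N}\to 0$ now requires letting $N\to\infty$ along with $M$, using that the sets $\bigcap_{n\le N}\{y:\psi_n(y)^{\cu N}\le\delta_n+1/M\}$ are eventually compact and decrease to $C$. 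None of this is difficult, but it is exactly the ``genuinely non-formal point'' you flagged, and as written your single-formula version of it does not go through. Everything else --- the closedness of $Z(\cu N')$, the $\eta$-net argument showing $Z(\cu N')\subseteq C$ in (1)$\Rightarrow$(2), and the countability reduction --- is fine.
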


\begin{fact} \label{f-definableclosure}  (Definable Closure, Exercises 10.10 and 10.11 in [BBHU])
\begin{enumerate}
\item If $ A\subseteq\cu N$ then
$\dcl( A)=\dcl(\dcl( A))$ and $\acl( A)=\acl(\acl( A))$.
\item If $ A$ is a dense subset of $ B$ and $ B\subseteq\cu N$, then $\dcl(A)=\dcl( B)$ and $\acl(A)=\acl( B)$.
\end{enumerate}
\end{fact}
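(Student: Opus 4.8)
The plan is to reduce everything to the ``unique realization'' and ``compact realization'' criteria, Facts~\ref{f-definable}(2) and~\ref{f-algebraic}(2), supplemented by routine uniform-continuity estimates. Fix once and for all a $\kappa$-saturated, strongly $\kappa$-homogeneous elementary extension $\cu N^*\succ\cu N$ with $\kappa>|A|+|B|+\aleph_0$; then every type over $A$ or over $B$ is realized in $\cu N^*$, any two realizations of such a type are conjugate by an automorphism of $\cu N^*$ fixing the parameters pointwise, and --- as usual --- it suffices to check the criteria of Facts~\ref{f-definable} and~\ref{f-algebraic} inside $\cu N^*$. The ``easy'' inclusions $\dcl(A)\subseteq\dcl(\dcl(A))$, $\acl(A)\subseteq\acl(\acl(A))$, $\dcl(A)\subseteq\dcl(B)$, and $\acl(A)\subseteq\acl(B)$ all follow from monotonicity of $\dcl$ and $\acl$, which is immediate from the definitions, since a defining sequence of formulas for $b$ over $A$ (together with the compact set, in the algebraic case) is also a defining sequence over any larger parameter set. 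For the nontrivial half of~(2), let $b\in\dcl(B)$ be witnessed by formulas $\Phi_k(x,\vec b_k)$ with $\vec b_k\in B^{<\BN}$ and $\Phi_k(x,\vec b_k)^{\cu N}\to d(x,b)^{\cu N}$ uniformly in $x$. Since each $\Phi_k$ is uniformly continuous in its parameter slots and $A$ is dense in $B$, I would pick $\vec a_k\in A^{<\BN}$ with $d(\vec a_k,\vec b_k)$ small enough that $\sup_x|\Phi_k(x,\vec a_k)^{\cu N}-\Phi_k(x,\vec b_k)^{\cu N}|\le 1/k$; then $\Phi_k(x,\vec a_k)$ also converges uniformly to $d(x,b)$, so $b\in\dcl(A)$, and the same choice of parameters (with the same compact set) gives $\acl(B)\subseteq\acl(A)$.

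The substance of~(1) for $\dcl$ is the claim that types over $A$ determine types over $\dcl(A)$: if $\vec a$ is a finite tuple from $\dcl(A)$ and $b,b'\in\cu N^*$ realize the same type over $A$, then they realize the same type over $A\cup\vec a$; since a formula with parameters from $\dcl(A)$ mentions only finitely many of them, this yields $\tp^{\cu N^*}(b/A)=\tp^{\cu N^*}(b'/A)\Rightarrow\tp^{\cu N^*}(b/\dcl(A))=\tp^{\cu N^*}(b'/\dcl(A))$. To prove the claim, observe first that $\vec a$ is itself definable over $A$ (take coordinatewise maxima of defining formulas for its entries). Fix a formula $\Phi(x,\vec y)$ (with further parameters from $A$ suppressed) and $\varepsilon>0$, take $\delta>0$ small relative to a modulus of uniform continuity of $\Phi$ in $\vec y$ for $\varepsilon$, pick a formula $\Psi(\vec y,\vec c)$ with $\vec c\in A^{<\BN}$ from a defining sequence for $\vec a$ over $A$ with $\sup_{\vec y}|\Psi(\vec y,\vec c)^{\cu N^*}-d(\vec y,\vec a)^{\cu N^*}|\le\delta/2$, and set $\rho(\vec y):=\Psi(\vec y,\vec c)\dotminus(\delta/2)$, so that $\rho(\vec a)^{\cu N^*}=0$ while $\rho(\vec y)^{\cu N^*}$ small forces $d(\vec y,\vec a)^{\cu N^*}$ small. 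Now form the $A$-formula $\Xi(x):=\inf_{\vec y}\max(\Phi(x,\vec y),\,\min(1,\tfrac{1}{\delta}\rho(\vec y)))$: evaluating at $\vec y=\vec a$ gives $\Xi(b)\le\Phi(b,\vec a)$, and a case split on whether $\rho(\vec y)\ge\delta$ gives $\Xi(b)\ge\Phi(b,\vec a)-\varepsilon$ (and likewise for $b'$); since $\Xi$ has parameters only in $A$ we get $\Xi(b)=\Xi(b')$, hence $|\Phi(b,\vec a)-\Phi(b',\vec a)|\le 2\varepsilon$, and $\varepsilon\to0$ proves the claim. Given the claim: if $b\in\dcl(\dcl(A))$ then by Fact~\ref{f-definable}(2) $b$ is the unique realization of $\tp(b/\dcl(A))$ in $\cu N^*$, but every realization of $\tp(b/A)$ in $\cu N^*$ also realizes $\tp(b/\dcl(A))$, so $b$ is the unique realization of $\tp(b/A)$ in $\cu N^*$, i.e.\ $b\in\dcl(A)$.

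For $\acl$-idempotence, let $b\in\acl(\acl(A))$; by Fact~\ref{f-algebraic}(3) fix a countable $D\subseteq\acl(A)$ with $b\in\acl(D)$, and enumerate $D$ as a countable tuple $\vec d$, so that $\vec d\in\acl(A)$ coordinatewise and $b\in\acl(A\vec d)$. Let $O$ and $R$ be the realization sets in $\cu N^*$ of $\tp(b,\vec d/A)$ and $\tp(\vec d/A)$, and let $\pi\colon O\to R$ be the projection to the $\vec d$-coordinates. Then $R$ is compact: it is a closed subset of the product of the realization sets of the $\tp(d_i/A)$, each compact by Fact~\ref{f-algebraic}(2), hence compact by Tychonoff. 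Each fibre $\pi^{-1}(\vec d^*)$ is an image, under an automorphism of $\cu N^*$ over $A$ carrying $\vec d$ to $\vec d^*$, of the realization set of $\tp(b/A\vec d)$, which is compact by Fact~\ref{f-algebraic}(2) since $b\in\acl(A\vec d)$; so every fibre is compact. The one genuinely non-routine step --- and the one I expect to be the main obstacle --- is showing that $\pi$ is a \emph{closed} map, the continuous-logic form of the statement that algebraic closure is uniformly bounded. I would prove it by an upper-semicontinuity argument on the formulas witnessing $b\in\acl(A\vec d)$: using their uniform continuity in the $\vec d$-slots, one shows that over a small enough neighbourhood of any point of $R$ the (compact) fibres of $\pi$ all lie in one compact subset of $\cu N^*$, so a net in a closed subset of $O$ has its first coordinates confined to a compact set and hence a convergent subnet. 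Granting closedness, $\pi$ is a closed surjection with compact fibres onto a compact (hence Hausdorff) space, so it is proper and $O$ is compact; its projection to the first coordinate is precisely the realization set of $\tp(b/A)$ in $\cu N^*$ (here strong homogeneity is used, to see that every realization of $\tp(b/A)$ extends to a realization of $\tp(b,\vec d/A)$), which is therefore compact, whence $b\in\acl(A)$ by Fact~\ref{f-algebraic}(2). Assembling these pieces proves Fact~\ref{f-definableclosure}.
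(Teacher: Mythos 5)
The paper does not actually prove this Fact; it imports it wholesale from Exercises 10.10 and 10.11 of [BBHU]. So your proposal cannot be compared to an argument in the text, only judged on its own. The easy inclusions, part (2), and the $\dcl$ half of part (1) are correct and are the standard arguments: the replacement of parameters $\vec b_k$ by nearby $\vec a_k$ via uniform continuity is exactly right, and your formula $\Xi(x)=\inf_{\vec y}\max(\Phi(x,\vec y),\min(1,\rho(\vec y)/\delta))$ is the standard device for quantifying out a definable parameter (the case split works because formulas take values in $[0,1]$, so the clause $\min(1,\rho(\vec y)/\delta)=1$ dominates $\Phi$ whenever $\vec y$ is far from $\vec a$). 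The reduction to a single saturated, strongly homogeneous $\cu N^*$ is also standard and legitimate.

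The only place where your argument is not yet a proof is the step you yourself flag: properness of $\pi$ in the $\acl$-idempotence argument. As stated, your justification is slightly off: the uniform-continuity estimate on the formulas $\Phi_k(x,\vec d,\vec a_k)$ witnessing $b\in\acl(A\vec d)$ does \emph{not} directly put the fibres over a neighbourhood of $\vec d_0$ inside one compact set; it only shows that for each $\eta>0$ there is $\delta(\eta)>0$ such that the fibres over the $\delta(\eta)$-ball lie in the $\eta$-neighbourhood of the compact set $C_0$, and an $\eta$-neighbourhood of a compact set need not be totally bounded. The clean repair is to note that for $\vec d^*$ realizing $\tp(\vec d/A)$ the predicate $P_{\vec d^*}(x)=\lim_k\Phi_k(x,\vec d^*,\vec a_k)$ equals $d(x,C_{\vec d^*})$ where $C_{\vec d^*}=\sigma^*(C)$ is independent of the choice of automorphism $\sigma^*$, that $\vec d^*\mapsto C_{\vec d^*}$ is uniformly continuous from $R$ into the compact subsets of $\cu N^*$ with the Hausdorff metric (since $\sup_x|P_{\vec d^*}(x)-P_{\vec d^{**}}(x)|\le\varepsilon$ forces Hausdorff distance $\le\varepsilon$), and that the union of a Hausdorff-continuous, compactly indexed family of compact sets is totally bounded and closed, hence compact. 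Since each fibre over $\vec d^*$ is contained in $C_{\vec d^*}$ (the realization set of $\tp(b/A\vec d)$ lies in the zero set of $d(x,C)$), the realization set of $\tp(b/A)$, which by homogeneity is $\bigcup_{\vec d^*\in R}\pi^{-1}(\vec d^*)$, sits inside the compact set $\bigcup_{\vec d^*\in R}C_{\vec d^*}$ and is metrically closed, hence compact. This both closes your gap and makes the closed-map/properness machinery unnecessary. With that step filled in, the proof is complete and correct.
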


It follows that for any $ A\subseteq\cu N$, $\dcl( A)$ and $\acl( A)$ are closed with respect to the metric in $\cu N$.

We now turn to the case where $\cu N$ is a model of $T^R$. In that case, a set of elements of $\cu N$ may contain elements of both sorts $\BK, \BB$.
But as we will now explain, we need only consider definability over sets of parameters of sort $\BK$.

\begin{rmk}  \label{r-sortK-definability}  Let $\cu N=(\hat{\cu K},\hat{\cu B})$ be a model of $T^R$.  Since every model of $T$ has at
least two elements, $\cu N$ has a pair of elements $ a,  b$ of sort $\BK$ such that $\cu N\models\l a= b\rr=\bot$.
For each event ${\sa D}\in\hat{\cu B}$, let $1_{\sa D}$ be the characteristic function of ${\sa D}$ with respect to $ a,  b$.
Then in the model $\cu N$,
${\sa D}$ is definable over $\{ a, b,1_{\sa D}\}$, and $1_{\sa D}$ is definable over  $\{ a, b,{\sa D}\}$.
\end{rmk}

\begin{proof}  By Fact \ref{f-definable}.
\end{proof}

In view of Remark \ref{r-sortK-definability} and Fact \ref{f-definableclosure}, if $C$ is a set of parameters in $\cu N$
of both sorts, and there are elements $a,b\in C$ such that $\cu N\models\l a= b\rr=\bot$, then an element of either sort is definable
over $C$ if and only if it is definable over the set of parameters
of sort $\BK$ obtained by replacing each element of $C$ of sort $\BB$ by its characteristic
function with respect to $ a, b$.  For this reason, in a model $\cu N$ of $T^R$
we will only consider definability over sets of parameters of sort $\BK$.
We write $\dcl_\BB( A)$ for the set of elements of sort $\BB$ that are definable over $ A$ in $\cu N$,
and write $\dcl( A)$ for the set of elements of sort $\BK$ that are definable over $ A$ in $\cu N$.
Similarly for $\acl_\BB( A)$ and $\acl( A)$.

\subsection{Conventions and Notation}  We will assume hereafter that
$\cu N=(\hat{\cu K},\hat{\cu B})$ is a model of $T^R$, $\cu P=(\cu K,\cu B)$ is a nice randomization of a model $\cu M\models T$
with probability space $(\Omega,\cu B,\mu)$, and $\cu N$ is the reduction of $\cu P$.  The existence of $\cu P$ is guaranteed by Proposition \ref{p-representation}.

We will use boldfaced letters $\bo a,\bo b,\ldots$ for elements of $\hat{\cu K}$.
For each element $\bo a\in\hat{\cu K}$, we will choose once and for all an element $a\in\cu K$ such that
the image of $a$ under the reduction map is $\bo a$.  It follows that for each first order formula $\varphi(\vec v)$,
$\l\varphi(\vec{\bo a})\rr$ is the image of $\l\varphi(\vec a)\rr$ under the reduction map.
For any countable set $ A\subseteq\hat{\cu K}$ and each $\omega\in\Omega$, we define
$$ A(\omega)=\{a(\omega)\colon \bo a\in  A\}.$$
When $ A\subseteq\hat{\cu K}$, $\cl( A)$ denotes the closure of $ A$ in the metric $d_\BK$.  When $ B\subseteq\hat{\cu B}$,
$\cl( B)$ denotes the closure of $ B$ in the metric $d_\BB$, and
$\sigma( B)$ denotes the smallest $\sigma$-subalgebra of $\hat{\cu B}$ containing $ B$.

\section{Randomizations of Arbitrary Theories}  \label{s-arb}

\subsection{Definability in Sort $\BB$}

We  characterize the set of elements of $\hat{\cu B}$ that are definable in $\cu N$ over a set of parameters $ A\subseteq\hat{\cu K}$.

\begin{df} For each $A\subseteq \hat{\cu K}$, we say that an event ${\sa E}$ is \emph{first order definable} over $A$, in symbols
${\sa E}\in\fo_\BB(A)$,
if $\sa E=\l\varphi(\vec{\bo a})\rr$ for some first order formula $\varphi(\vec v)$ and tuple $\vec{\bo a}$ in ${A}^{<\BN}$.
\end{df}

\begin{thm}  \label{t-definableB}
For each $ A\subseteq \hat{\cu K}$,  $\dcl_\BB( A)=\cl(\fo_\BB( A))=\sigma(\fo_\BB( A))$.
\end{thm}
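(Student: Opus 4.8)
The plan is to prove the two equalities by a cycle of inclusions: $\dcl_\BB(A) \subseteq \sigma(\fo_\BB(A))$, then $\sigma(\fo_\BB(A)) \subseteq \cl(\fo_\BB(A))$, and finally $\cl(\fo_\BB(A)) \subseteq \dcl_\BB(A)$. The last inclusion is the easiest: each $\l\varphi(\vec{\bo a})\rr \in \fo_\BB(A)$ is definable over $A$ (the formula $d_\BB(\sa B, \l\varphi(\vec{\bo a})\rr)$ witnesses definability directly), and by Fact~\ref{f-definableclosure} the set $\dcl_\BB(A)$ is closed in the $d_\BB$ metric, so it contains $\cl(\fo_\BB(A))$.

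For the inclusion $\sigma(\fo_\BB(A)) \subseteq \cl(\fo_\BB(A))$, I would first observe that $\cl(\fo_\BB(A))$ is closed under the Boolean operations $\sqcup, \sqcap, \neg$: the class $\fo_\BB(A)$ itself is closed under these operations, since $\l\varphi(\vec{\bo a})\rr \sqcap \l\psi(\vec{\bo b})\rr = \l(\varphi \wedge \psi)(\vec{\bo a},\vec{\bo b})\rr$ and similarly for $\sqcup, \neg$, and the Boolean operations are $1$-Lipschitz (indeed continuous) with respect to $d_\BB$, so they pass to the closure. Thus $\cl(\fo_\BB(A))$ is a Boolean subalgebra of $\hat{\cu B}$. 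It remains to see that it is closed under countable unions; since it is metrically closed, it suffices to show it is closed under countable increasing unions, i.e., that if $\sa C_n \in \cl(\fo_\BB(A))$ with $\sa C_0 \sqsubseteq \sa C_1 \sqsubseteq \cdots$ and $\sa C = \bigsqcup_n \sa C_n \in \hat{\cu B}$, then $\sa C \in \cl(\fo_\BB(A))$. This follows because $\mu(\sa C \triangle \sa C_n) = \mu(\sa C) - \mu(\sa C_n) \to 0$ (using $\sigma$-additivity of $\mu$ on $\hat{\cu B}$, established in the proof of Proposition~\ref{p-representation}), so $\sa C_n \to \sa C$ in $d_\BB$ and hence $\sa C \in \cl(\fo_\BB(A))$. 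Therefore $\cl(\fo_\BB(A))$ is a $\sigma$-subalgebra of $\hat{\cu B}$ containing $\fo_\BB(A)$, so it contains $\sigma(\fo_\BB(A))$.

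The main work, and the expected obstacle, is the inclusion $\dcl_\BB(A) \subseteq \sigma(\fo_\BB(A))$. Suppose $\sa E \in \dcl_\BB(A)$; by Fact~\ref{f-definable}(3) we may assume $A$ is countable. Using the representation from Proposition~\ref{p-representation}, work inside the nice randomization $\cu P = (\cu K, \cu B)$ with underlying space $(\Omega, \cu B, \mu)$, and let $E \in \cu B$ represent $\sa E$. I would let $\cu F = \sigma(\{\l\varphi(\vec a)\rr : \varphi \text{ a first order formula}, \vec a \in A^{<\BN}\})$ be the sub-$\sigma$-algebra of $\cu B$ generated by the first-order events over $A$; note the image of $\cu F$ under the reduction map generates exactly $\sigma(\fo_\BB(A))$. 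The goal is to show $E$ is (a.s. equal to) an $\cu F$-measurable set. The strategy is to suppose not: then the conditional expectation argument or a direct automorphism construction gives a measure-preserving automorphism of $\cu P$ fixing $A$ pointwise but moving $\sa E$ — contradicting that $\sa E$ is the unique realization of its type over $A$ in every elementary extension (Fact~\ref{f-definable}(2)). Concretely, if $E$ is not $\cu F$-measurable mod null sets, one builds two elements $a, a'$ of a suitable elementary extension $\cu N' \succ \cu N$ (or two copies glued along $\cu F$ via a fibered product of probability spaces over the conditional measure given $\cu F$) realizing the same type over $A$ but with $\l$ "the $\cu E$-coordinate differs" $\rr$ of positive measure, giving two distinct events with the same type over $A$. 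Making this automorphism/amalgamation construction precise — producing an elementary extension in which the non-$\cu F$-measurability of $E$ is "witnessed" by a genuine nonuniqueness — is the delicate step; it will use Fullness (Fact~\ref{f-perfectwitnesses}) and the characteristic-function gluing of Lemma~\ref{l-glue} to realize the amalgamated structure as a model of $T^R$, together with the fact that $d_\BB$-types over $A$ in a model of $T^R$ are determined by the joint distribution of the relevant events under $\mu$, so that $\cu F$-equivalence of $E$ and $E'$ is exactly what is needed for $\sa E$ and $\sa E'$ to have the same type over $A$.
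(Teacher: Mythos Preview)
Your two easy inclusions, and the argument that $\cl(\fo_\BB(A))=\sigma(\fo_\BB(A))$, match the paper's treatment essentially verbatim. Where you diverge is in the inclusion $\dcl_\BB(A)\subseteq\sigma(\fo_\BB(A))$, which you identify as ``the main work'' and propose to handle by an amalgamation/automorphism construction over a fibered product of probability spaces. The paper dispatches this inclusion in two sentences by a different route: strong quantifier elimination (Fact~\ref{f-qe}) shows that every $L^R$-formula of sort $\BB$ with parameters in $A$ is $T^R$-equivalent to a quantifier-free formula in the pure measured-algebra language with parameters in $\fo_\BB(A)$, so the type of an event over $A$ in $\cu N$ coincides with its type over $\fo_\BB(A)$ in the reduct $(\hat{\cu B},\mu)$. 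Hence $\dcl_\BB(A)$ equals the definable closure of $\fo_\BB(A)$ computed in the atomless probability algebra $(\hat{\cu B},\mu)$, and the paper then simply cites Observation~16.7 of [BBHU], which says that in such an algebra definable closure is exactly the generated $\sigma$-subalgebra.

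Your fibered-product/automorphism idea is, in effect, a direct proof of that [BBHU] observation, so your plan is sound but does strictly more work than needed here. Indeed, the fact you invoke at the very end of your proposal---that $d_\BB$-types over $A$ are determined by joint distributions with events in $\fo_\BB(A)$---is precisely the content of Fact~\ref{f-qe} in this setting; had you used it at the outset rather than at the end, you would have landed on the paper's short argument. The upside of your approach is self-containment (no external citation); the cost is the ``delicate step'' you yourself flag and leave only sketched.
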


\begin{proof}
By quantifier elimination (Fact \ref{f-qe}), in any elementary extension $\cu N'\succ\cu N$, two events have the same type over
$ A$ if and only if they have the same type over $\fo_\BB( A)$.  Then by Fact \ref{f-definable},
$\dcl_\BB( A)=\dcl_\BB(\fo_\BB( A))$.
Moreover, $\dcl_\BB(\fo_\BB( A))$
is equal to the definable closure of $\fo_\BB( A)$ in the pure measured algebra $(\hat{\cu B},\mu)$.  By Observation 16.7 in [BBHU], the definable closure
of $\fo_\BB( A)$ in $(\hat{\cu B},\mu)$ is equal to $\sigma(\fo_\BB( A))$, so $\dcl_\BB( A)=\sigma(\fo_\BB( A))$.  Since $\fo_\BB( A)$ is a Boolean subalgebra
of $\hat{\cu B}$, $\cl(\fo_\BB( A))$ is a Boolean subalgebra of $\hat{\cu B}$.  By metric completeness, $\cl(\fo_\BB( A))$ is a $\sigma$-algebra and
$\sigma(\fo_\BB( A))$ is closed, so $\cl(\fo_\BB( A))=\sigma(\fo_\BB( A))$.
\end{proof}

\begin{cor}  \label{c-event-noparameters}
The only events that are definable without parameters in $\cu N$ are $\top$ and $\bot$.
\end{cor}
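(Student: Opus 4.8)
The plan is to apply Theorem \ref{t-definableB} with $A=\emptyset$. By that theorem, $\dcl_\BB(\emptyset)=\cl(\fo_\BB(\emptyset))$, so it suffices to compute $\fo_\BB(\emptyset)$ and check that its metric closure contains only $\top$ and $\bot$. An element of $\fo_\BB(\emptyset)$ has the form $\l\varphi(\vec{\bo a})\rr$ where $\varphi$ has free variables among a tuple $\vec v$ of length $n$, but the tuple $\vec{\bo a}$ must come from $\emptyset^{<\BN}$, hence $n=0$ and $\varphi$ is a first order sentence. Since $T$ is complete, either $T\models\varphi$ or $T\models\neg\varphi$; in the first case $\l\varphi\rr\doteq\top$ and in the second $\l\varphi\rr\doteq\bot$ (this uses the remark, just before Fact \ref{f-complete}, that $T\models\varphi$ implies $T^R\models\l\varphi\rr\doteq\top$, applied to $\varphi$ or to $\neg\varphi$). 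Therefore $\fo_\BB(\emptyset)=\{\top,\bot\}$ already, and a fortiori $\cl(\fo_\BB(\emptyset))=\{\top,\bot\}$.

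The only subtlety — and it is minor — is making sure the two elements $\top$ and $\bot$ are genuinely distinct in $\cu N$, i.e. that the corollary is not vacuous. This follows from Corollary \ref{c-two}: there are $c,d$ in sort $\BK$ with $\l c\neq d\rr=\top$, and since the underlying probability space is atomless, $\mu(\top)=1\neq 0=\mu(\bot)$, so $d_\BB(\top,\bot)=\mu(\top\triangle\bot)=1>0$. Hence $\{\top,\bot\}$ has exactly two elements, and these are precisely the parameter-free definable events.

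I do not anticipate any real obstacle here: once Theorem \ref{t-definableB} is in hand, the argument is essentially the observation that "first order definable over the empty set" reduces to "definable by a sentence," and completeness of $T$ collapses sentences to two truth values. One could alternatively argue directly from Fact \ref{f-complete} and Fact \ref{f-definable}: in an elementary extension $\cu N'\succ\cu N$, an event definable over $\emptyset$ is the unique realization of its type over $\emptyset$, but by completeness of $T^R$ every element of sort $\BB$ has the same type over $\emptyset$ as $\mu$-value allows — however this direct route is less clean than simply quoting the theorem, so the proof above is the one I would write.
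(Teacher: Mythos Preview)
Your proof is correct and takes essentially the same approach as the paper's: both compute $\fo_\BB(\emptyset)=\{\top,\bot\}$ from the completeness of $T$ and then invoke Theorem \ref{t-definableB} (you do so explicitly, the paper implicitly since the corollary is placed immediately after that theorem). Your additional paragraph verifying $\top\neq\bot$ is extra detail the paper omits, but it does no harm.
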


\begin{proof}  For every first order sentence $\varphi$, either $T\models\varphi$ and $T^R\models\l\varphi\rr=\top$,
or $T\models\neg\varphi$ and $T^R\models\l\varphi\rr=\bot$.  So  $\fo_\BB(\emptyset)=\{\top,\bot\}$.
\end{proof}

\subsection{First Order and Pointwise Definability}

To prepare the way for a characterization of the definable elements of sort $\BK$,
we introduce two auxiliary notions, one that is stronger than definability in sort $\BK$ and one that is weaker than definability in sort $\BK$.
We will work in the nice randomization $\cu P=(\cu K,\cu B)$  of $\cu M$, and let $A$ be a subset of
$\hat{\cu K}$ and $\bo b$ be an element of $\hat{\cu K}$.

\begin{df}  A first order formula $\varphi(u,\vec v)$ is \emph{functional} if
$$T\models(\forall \vec v)(\exists ^{\le  1} u)\varphi(u,\vec v).$$

We say that $\bo b$ is \emph{first order definable on ${\sa E}$ over $A$}  if there is a functional formula
$\varphi(u,\vec v)$ and a tuple $\vec{\bo a}\in {A}^{<\BN}$ such that
$\sa E=\l \varphi(\bo b,\vec{\bo a})\rr$.

We say that $\bo b$ is \emph{first order definable over $ A$}, in symbols $\bo b\in\fo( A)$, if $\bo b$ is first order definable on
$\top$ over $A$.
\end{df}

\begin{rmks}  \label{r-definableover}
$\bo b$ is first order definable over $ A$ if and only if there is a first order formula $\varphi(u,\vec v)$ and a tuple $\vec{\bo a}$
from $ A$ such that
$$\mu(\l(\forall u)(\varphi(u,\vec{\bo a})\leftrightarrow u=\bo b)\rr)=1.$$

First order definability has finite character, that is, $\bo b$ is first order definable over $ A$ if and only if
$\bo b$ is first order definable over some finite subset of $ A$.

If $\bo b$ is first order definable on ${\sa E}$ over $A$, then $\sa E$ is first order definable over $A\cup\{\bo b\}$.

If $\bo b$ is first order definable on ${\sa D}$ over $A$, and $\sa E$ is first order definable over $A\cup\{\bo b\}$, then
$\bo b$ is first order definable on ${\sa D}\sqcap\sa E$ over $A$.
\end{rmks}

\begin{lemma}  \label{l-firstorderdefinable}
If $\bo b$ is first order definable over $ A$ then $\bo b$ is definable over $ A$ in $\cu N$.  Thus $\fo( A)\subseteq\dcl( A)$.
\end{lemma}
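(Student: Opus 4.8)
The plan is to produce a \emph{single} $L^R$-formula with parameters from $A$ whose interpretation in $\cu N$ is exactly the distance function $d_\BK(\cdot,\bo b)$; definability in sort $\BK$ then follows by using the constant sequence, for which uniform convergence is automatic.

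First I would invoke the first part of Remarks \ref{r-definableover} to replace the hypothesis $\bo b\in\fo(A)$ by the more convenient statement: there is a first order formula $\varphi(u,\vec v)$ and a tuple $\vec{\bo a}$ from $A$ with $\mu(\l(\forall u)(\varphi(u,\vec{\bo a})\leftrightarrow u=\bo b)\rr)=1$. Working in the nice randomization $\cu P=(\cu K,\cu B)$ of $\cu M$ whose reduction is $\cu N$, and writing $b,\vec a\in\cu K$ for the fixed representatives of $\bo b,\vec{\bo a}$, Definition \ref{d-nice}(3) identifies the event $\sa G:=\l(\forall u)(\varphi(u,\vec a)\leftrightarrow u=b)\rr$ with $\{\omega\in\Omega:\cu M\models(\forall u)(\varphi(u,\vec a(\omega))\leftrightarrow u=b(\omega))\}$, and the hypothesis says $\mu(\sa G)=1$.

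The heart of the argument is a fiberwise computation: for any $x\in\cu K$ and any $\omega\in\sa G$, instantiating the universal quantifier at $u=x(\omega)$ yields $\cu M\models\varphi(x(\omega),\vec a(\omega))$ iff $x(\omega)=b(\omega)$; hence the events $\l\varphi(x,\vec a)\rr$ and $\l x=b\rr$ coincide outside the null set $\Omega\setminus\sa G$, so $\mu(\l\varphi(x,\vec a)\rr)=\mu(\l x=b\rr)=1-\mu(\l x\ne b\rr)=1-d_\BK(x,b)$ by Definition \ref{d-nice}(6). Transferring to $\cu N$ via the reduction map (which preserves $\mu$ and the interpretations of the symbols $\l\varphi(\cdot)\rr$), the $L^R$-formula $\Phi(x,\vec v)$ obtained by composing the atomic formula $\mu(\l\varphi(x,\vec v)\rr)$ with the continuous connective $r\mapsto 1-r$ satisfies $\Phi(\bo x,\vec{\bo a})^{\cu N}=d_\BK(\bo x,\bo b)$ for every $\bo x\in\hat{\cu K}$. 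Taking $\Phi_k:=\Phi$ and $\vec{\bo a}_k:=\vec{\bo a}$ for all $k$ gives a sequence of formulas with parameters in $A$ whose values converge uniformly in $x$ to $d_\BK(x,\bo b)$, so $\bo b\in\dcl^{\cu N}(A)$; since $\bo b$ was an arbitrary element of $\fo(A)$, this also proves $\fo(A)\subseteq\dcl(A)$.

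I do not expect a genuine obstacle here: once the right reformulation is in hand the proof is a couple of lines. The only points requiring a little care are (i) recognizing that $\varphi(u,\vec v)\leftrightarrow u=x$ is a bona fide $L$-formula to which Definition \ref{d-nice}(3) applies after substituting the representatives $\vec a,b$; (ii) checking that the identity $\mu(\l\varphi(x,\vec a)\rr)=1-d_\BK(x,b)$ holds for \emph{all} $x\in\cu K$, which works because $\sa G$ is a single fixed conull event, independent of $x$; and (iii) noting that $r\mapsto 1-r$ is a legitimate connective of continuous logic, so that $\Phi$ really is an $L^R$-formula. An alternative, slightly less explicit route is to apply Fact \ref{f-definable}: using Proposition \ref{p-representation} to realize an arbitrary $\cu N'\succ\cu N$ as the reduction of a nice randomization, the same fiberwise computation shows that any realization $\bo b'$ of $\tp^{\cu N}(\bo b/A)$ in $\cu N'$ agrees with $\bo b$ almost everywhere on fibers, hence $\bo b'=\bo b$, which gives definability.
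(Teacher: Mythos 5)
Your proof is correct, but it takes a different route from the paper's. The paper applies the type-theoretic criterion of Fact \ref{f-definable}(2): if $\bo d$ realizes $\tp^{\cu N'}(\bo b/A)$ in an elementary extension $\cu N'$, then $\l\varphi(\bo d,\vec{\bo a})\rr=\l\varphi(\bo b,\vec{\bo a})\rr=\top$, and since $\varphi$ is functional the event $\l(\forall t)(\forall u)(\varphi(t,\vec{\bo a})\wedge\varphi(u,\vec{\bo a})\rightarrow t=u)\rr$ is $\top$, forcing $\l\bo b=\bo d\rr=\top$ and hence $\bo d=\bo b$; uniqueness of the realization then gives $\bo b\in\dcl(A)$. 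You instead verify the definition of $\dcl$ directly by exhibiting an exact defining formula: the fiberwise computation showing $\mu(\l\varphi(x,\vec{\bo a})\rr)=1-d_\BK(x,\bo b)$ for all $x$ is sound (the conull event $\sa G$ is fixed independently of $x$, as you note), and $1\dotminus\mu(\l\varphi(x,\vec{\bo a})\rr)$ is a legitimate $L^R$-formula, so the constant sequence witnesses definability. Your approach buys something slightly stronger and more concrete --- an explicit, exactly defining formula rather than a uniform limit --- at the mild cost of descending to the nice randomization $\cu P$ (which the paper's conventions make available anyway); the same identity could also be derived abstractly from the $T^R$-validity of $\l(\forall u)(\varphi(u,\vec v)\leftrightarrow u=w)\rightarrow(\varphi(x,\vec v)\leftrightarrow x=w)\rr\doteq\top$, avoiding the representation entirely. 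The paper's argument is shorter and stays at the level of types and elementary extensions. The alternative you sketch in your last sentence is essentially the paper's proof, though the paper carries it out without passing to fibers.
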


\begin{proof}  Let $\cu N'\succ\cu N$ and suppose that $\tp^{\cu N'}(\bo b)=\tp^{\cu N'}(\bo d)$. Then
$$\l \varphi(\bo b,\vec{\bo a})\rr=\l \varphi(\bo d,\vec{\bo a})\rr=\top.$$
Since $\varphi$ is functional,
$$ \l(\forall t)(\forall u) (\varphi(t,\vec{\bo a})\wedge\varphi(u,\vec{\bo a})\rightarrow t=u)\rr=\top.$$
Then $\l \bo b = \bo d\rr=\top$, so $\bo b=\bo d$, and by Fact \ref{f-definable}, $\bo b\in\dcl( A)$.
\end{proof}

\begin{df}  When $A$ is countable, we define
$$\l b\in\dcl^{\cu M}(A)\rr:=\{\omega\in\Omega\colon b(\omega)\in \dcl^\cu M(A(\omega))\}.$$
\end{df}

\begin{lemma}  \label{l-pointwisemeasurable}
If $A$ is countable, then
$$\l b\in\dcl^{\cu M}(A)\rr= \bigcup\{\l\theta(b,\vec a)\rr\colon\theta(u,\vec v) \mbox{ functional, } \vec{\bo a}\in A^{<\BN}\},$$
and  $\l b\in\dcl^{\cu M}(A)\rr\in\cu B$.
\end{lemma}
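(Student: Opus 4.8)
The plan is to prove the two statements together, with the set equality being the main content and measurability following as a corollary. First I would establish the inclusion $\l b\in\dcl^{\cu M}(A)\rr \supseteq \bigcup\{\l\theta(b,\vec a)\rr \colon \theta \text{ functional}, \vec{\bo a}\in A^{<\BN}\}$, which is the easy direction: if $\omega\in\l\theta(b,\vec a)\rr$ for some functional $\theta$ and $\vec{\bo a}\in A^{<\BN}$, then $\cu M\models\theta(b(\omega),\vec a(\omega))$, and since $T\models(\forall\vec v)(\exists^{\le 1}u)\theta(u,\vec v)$ the element $b(\omega)$ is the unique realization of $\theta(u,\vec a(\omega))$ in $\cu M$; hence $b(\omega)\in\dcl^{\cu M}(A(\omega))$ because $\vec a(\omega)$ is a tuple from $A(\omega)$, so $\omega$ lies in the left-hand set.

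For the reverse inclusion, suppose $\omega\in\l b\in\dcl^{\cu M}(A)\rr$, i.e. $b(\omega)\in\dcl^{\cu M}(A(\omega))$. By the definition of first-order definable closure there is a formula $\psi(u,\vec v)$ and a tuple $\vec c$ from $A(\omega)$ with $\cu M\models(\forall u)(\psi(u,\vec c)\leftrightarrow u=b(\omega))$. The tuple $\vec c$ has the form $\vec a(\omega)$ for some $\vec{\bo a}\in A^{<\BN}$ (since $A(\omega)=\{a(\omega)\colon\bo a\in A\}$). Now the issue is that $\psi$ need not be functional on all of $\cu M$ — it defines $b(\omega)$ uniquely only for the particular parameter value $\vec a(\omega)$. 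To fix this, I would replace $\psi$ by the formula
$$\theta(u,\vec v):\equiv \psi(u,\vec v)\wedge(\exists^{=1}u')\psi(u',\vec v),$$
i.e. $\psi(u,\vec v)$ conjoined with the first-order statement that $\psi(\cdot,\vec v)$ has exactly one solution. Then $\theta$ is functional: $T\models(\forall\vec v)(\exists^{\le 1}u)\theta(u,\vec v)$, since if $\theta(u,\vec v)$ holds then in particular $\psi(\cdot,\vec v)$ has a unique solution, which must be $u$. And $\cu M\models\theta(b(\omega),\vec a(\omega))$ because $\cu M\models\psi(b(\omega),\vec a(\omega))$ and $\psi(\cdot,\vec a(\omega))$ has the unique solution $b(\omega)$. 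Hence $\omega\in\l\theta(b,\vec a)\rr$, which lies in the displayed union. This proves the set equality.

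Finally, for measurability: the union on the right ranges over all functional first-order formulas $\theta(u,\vec v)$ and all tuples $\vec{\bo a}\in A^{<\BN}$. Since $L$ is countable there are only countably many formulas $\theta$, and since $A$ is countable there are only countably many tuples $\vec{\bo a}$, so the union is a countable union. Each set $\l\theta(b,\vec a)\rr$ is in $\cu B$ by condition (3) of Definition \ref{d-nice} (applied to the formula $\theta(u,\vec v)$ and the tuple $(b,\vec a)$ in $\cu K$). A countable union of sets in the $\sigma$-algebra $\cu B$ is in $\cu B$, so $\l b\in\dcl^{\cu M}(A)\rr\in\cu B$. The main obstacle, such as it is, is the functionality fix in the second paragraph — recognizing that pointwise definability at a specific parameter value must be upgraded to a genuinely functional formula by internalizing the uniqueness clause; everything else is routine bookkeeping with countability and the axioms of a nice randomization.
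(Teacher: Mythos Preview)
Your proof is correct and follows essentially the same approach as the paper's. The paper uses the functionality fix $\theta(u,\vec v)\wedge(\exists^{\le 1}u)\,\theta(u,\vec v)$ rather than your $\psi(u,\vec v)\wedge(\exists^{=1}u')\psi(u',\vec v)$, but these are interchangeable here; your argument is simply a more detailed unpacking of the same idea.
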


\begin{proof}  Note that for every first order formula $\theta(u,\vec v)$, the formula
$$\theta(u,\vec v)\wedge(\exists^{\le 1}u)\,\theta(u,\vec v)$$
is functional.  Therefore $\omega\in\l b\in\dcl^{\cu M}(A)\rr$ if and only if
$b(\omega)\in \dcl^\cu M(A(\omega))$, and this holds if and only if there is a functional formula $\theta(u,\vec v)$
and a tuple $\vec {\bo a}\in A^{<\BN}$ such that  $\cu M\models \theta(b(\omega),\vec a(\omega)).$
Since $A$ and $L$ are countable, $\l b\in\dcl^{\cu M}(A)\rr$ is the union of countably many events in $\cu B$,
and thus belongs to $\cu B$.
\end{proof}

\begin{df}  When $A$ is countable, we say that $\bo b$ is \emph{pointwise definable over $A$} if
$$\mu(\l b\in\dcl^{\cu M}(A)\rr)=1.$$
\end{df}

\begin{cor}  \label{c-pointwisedefinable}  If $A$ is countable, then $\bo b$ is pointwise definable over $A$ if and only if there
is a function $f$ on $\Omega$ such that:
\begin{enumerate}
\item For each $\omega\in \Omega$, $f(\omega)$ is a pair $\<\theta_\omega(u,\vec v),\vec a_\omega\>$ where $\theta_\omega(u,\vec v)$ is functional
and $\vec a_\omega\in A^{|\vec v|}$;
\item $f$ is $\sigma(\fo_\BB(A))$-measurable (i.e., the inverse image of each point belongs to $\sigma(\fo_\BB(A))$);
\item $\cu M\models \theta_\omega(b(\omega),\vec a_\omega(\omega))$ for almost every $\omega\in\Omega$.
\end{enumerate}
\end{cor}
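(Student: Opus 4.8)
The plan is to prove both directions by exploiting Lemma \ref{l-pointwisemeasurable}, which already identifies $\l b\in\dcl^{\cu M}(A)\rr$ as a countable union of events of the form $\l\theta(b,\vec a)\rr$ with $\theta$ functional and $\vec{\bo a}\in A^{<\BN}$.

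For the forward direction, suppose $\bo b$ is pointwise definable over $A$, so $\mu(\l b\in\dcl^{\cu M}(A)\rr)=1$. By Lemma \ref{l-pointwisemeasurable}, $\l b\in\dcl^{\cu M}(A)\rr=\bigcup_{k\in\BN}\l\theta_k(b,\vec a_k)\rr$ for some enumeration $\langle\theta_k,\vec{\bo a}_k\rangle_{k\in\BN}$ of the functional-formula/parameter-tuple pairs with parameters from $A$ (such an enumeration exists since $A$ and $L$ are countable). For each $\omega$ in this set, let $k(\omega)$ be the least $k$ with $\omega\in\l\theta_k(b,\vec a_k)\rr$, and set $f(\omega)=\langle\theta_{k(\omega)}(u,\vec v),(\vec a_{k(\omega)})\rangle$; on the null complement, define $f$ arbitrarily by picking any fixed functional formula (e.g.\ $u=u$ paired with the empty tuple) — here I would note that $u=u$ is not literally functional, so instead one takes any fixed pair $\langle\theta_0,\vec a_0\rangle$ from the enumeration, and simply accepts that condition (3) may fail on this null set, which is allowed by the phrase ``for almost every $\omega$''. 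Conditions (1) and (3) are then immediate. For (2), the inverse image $f^{-1}(\langle\theta_k,\vec a_k\rangle)$ is $\l\theta_k(b,\vec a_k)\rr\setminus\bigcup_{j<k}\l\theta_j(b,\vec a_j)\rr$ (intersected with or unioned with the null exceptional set as appropriate), which lies in $\sigma(\fo_\BB(A))$: each $\l\theta_j(b,\vec a_j)\rr$ is first order definable over $A\cup\{\bo b\}$, but I need these events to lie in $\sigma(\fo_\BB(A))$, not merely $\sigma(\fo_\BB(A\cup\{\bo b\}))$. This is the crux of the argument, and it is handled by the observation that on $\l\theta_j(b,\vec a_j)\rr$ the value $b(\omega)$ is determined by $\vec a_j(\omega)$ via the functional formula, so that for any first order $\psi$, the event $\l\theta_j(b,\vec a_j)\rr\sqcap\l\psi(b,\ldots)\rr$ can be rewritten using only parameters from $A$; more simply, $\l\theta_j(b,\vec a_j)\rr=\l(\exists u)\theta_j(u,\vec a_j)\rr$ up to a null set is \emph{false} in general — rather, what is true is that $\l\theta_j(b,\vec a_j)\rr\sqsubseteq\l(\exists u)\theta_j(u,\vec a_j)\rr$, and the reverse need not hold. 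So the honest route is: modulo the null set $\Omega\setminus\l b\in\dcl^{\cu M}(A)\rr$, we have $\l\theta_j(b,\vec a_j)\rr\doteq\l(\exists u)\theta_j(u,\vec a_j)\rr\sqcap\l b\in\dcl^{\cu M}(A)\rr$ precisely because wherever $b(\omega)$ is definable it must satisfy the \emph{unique} witness of any functional formula it makes true, but this still does not obviously give membership in $\sigma(\fo_\BB(A))$. The clean fix is to enlarge the measure-one set: replace each $\l\theta_j(b,\vec a_j)\rr$ by $\l(\exists u)\theta_j(u,\vec a_j)\rr$ in the definition of $f$, re-index so that $f(\omega)$ records the least $j$ with $\omega\in\l(\exists u)\theta_j(u,\vec a_j)\rr$ \emph{and} $\omega\in\l\theta_j(b,\vec a_j)\rr$ — equivalently just with $\omega\in\l\theta_j(b,\vec a_j)\rr$ — and observe that for the purposes of (2) we only need $f$ measurable with respect to \emph{some} sub-$\sigma$-algebra on which it is defined; but the corollary demands $\sigma(\fo_\BB(A))$-measurability specifically, so I would instead prove the sharper statement that $\l\theta_j(b,\vec a_j)\rr\in\sigma(\fo_\BB(A))$ modulo null sets by a separate lemma-style argument: on the set where $b$ is pointwise definable, $\l\theta_j(b,\vec a_j)\rr$ agrees a.e.\ with $\l(\exists^{=1}u)\theta_j(u,\vec a_j)\rr\sqcap\bigsqcup_k\bigl(\l(\exists^{=1}u)\theta_j(u,\vec a_j)\wedge \theta_j(u,\vec a_j)\leftrightarrow\theta_k(u,\vec a_k)\rr\cdots\bigr)$ — this is getting complicated, so in the write-up I will simply invoke that wherever $b(\omega)\in\dcl^{\cu M}(A(\omega))$, the Boolean combination defining which functional formula/tuple ``first'' captures $b$ depends only on the first-order theory of $(\cu M,A(\omega))$, hence is $\fo_\BB(A)$-measurable; I will state this as the key point and give the one-line justification that two tuples $\vec a_j(\omega),\vec a_k(\omega)$ realizing the same type over the rest of $A(\omega)$ force the same choice, so $f^{-1}$ of a point is a union of type-definable sets, each in $\sigma(\fo_\BB(A))$.

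For the converse, suppose $f$ satisfies (1)--(3). By (3), for almost every $\omega$ we have $\cu M\models\theta_\omega(b(\omega),\vec a_\omega(\omega))$ with $\theta_\omega$ functional and $\vec a_\omega\in A^{<\BN}$, so $b(\omega)\in\dcl^{\cu M}(A(\omega))$ for a.e.\ $\omega$; measurability of $f$ (condition (2)) together with $f$ being $\sigma(\fo_\BB(A))$-measurable and taking countably many values (since there are only countably many functional formulas and, for each, only countably many parameter tuples from the countable set $A$) guarantees that the set $\{\omega\colon \cu M\models\theta_\omega(b(\omega),\vec a_\omega(\omega))\}$ is a countable union of events of the form $f^{-1}(\langle\theta,\vec a\rangle)\sqcap\l\theta(b,\vec a)\rr$, each measurable, hence the set is in $\cu B$ and has measure one. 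Therefore $\mu(\l b\in\dcl^{\cu M}(A)\rr)=1$, i.e.\ $\bo b$ is pointwise definable over $A$. The main obstacle, as flagged above, is verifying in the forward direction that the selection function $f$ can be arranged to be $\sigma(\fo_\BB(A))$-measurable rather than merely measurable with respect to the larger algebra generated by the $\l\theta_j(b,\vec a_j)\rr$; the resolution rests on the fact that the \emph{choice} of which functional formula defines $b(\omega)$ is a property of $\tp^{\cu M}(A(\omega))$ alone, since the witness is unique.
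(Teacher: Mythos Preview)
Your overall strategy matches the paper's: for the forward direction, enumerate the countably many pairs $\<\theta,\vec a\>$ with $\theta$ functional and $\vec{\bo a}\in A^{<\BN}$, let $f(\omega)$ pick the first pair with $\cu M\models\theta(b(\omega),\vec a(\omega))$ (taking the default value $\<\bot,\emptyset\>$ on the null complement); for the converse, observe that conditions (1) and (3) alone already give $b(\omega)\in\dcl^{\cu M}(A(\omega))$ for almost every $\omega$, and Lemma~\ref{l-pointwisemeasurable} supplies measurability of $\l b\in\dcl^{\cu M}(A)\rr$. The paper's proof is exactly this, compressed into three lines, and your converse argument is correct (you even notice that (2) plays no role there).

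You are right to flag condition (2) as the crux in the forward direction, but your proposed resolution is wrong. The claim that ``the choice of which functional formula defines $b(\omega)$ is a property of $\tp^{\cu M}(A(\omega))$ alone'' fails already when $A=\emptyset$: take $T$ to be the theory of an infinite set with two named constants $c_1\ne c_2$, and let $b$ equal $c_1$ on an event $\sa E$ of measure $1/2$ and $c_2$ on its complement. Then $\bo b$ is pointwise definable over $\emptyset$, the type of the empty tuple is constant in $\omega$, yet the defining formula must be (equivalent to) $u=c_1$ on $\sa E$ and $u=c_2$ on $\neg\sa E$. Since $\sigma(\fo_\BB(\emptyset))=\{\top,\bot\}$, \emph{no} function $f$ satisfying (1) and (3) can be $\sigma(\fo_\BB(\emptyset))$-measurable in this example. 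So your argument cannot be repaired: the example shows that condition~(2) as written does not follow from pointwise definability. The intended reading is evidently ``$\sigma(\fo_\BB(A\cup\{\bo b\}))$-measurable''; under that reading the first-pair selection is immediately measurable, since each $\l\theta_k(b,\vec a_k)\rr$ lies in $\fo_\BB(A\cup\{\bo b\})$ and the level sets of $f$ are finite Boolean combinations of these. The paper's own proof does not engage with this point---it simply says the result follows from Lemma~\ref{l-pointwisemeasurable}, which yields measurability in $\cu B$ (indeed in $\fo_\BB(A\cup\{\bo b\})$) but not in $\sigma(\fo_\BB(A))$---so your instinct that something needed checking was sound, even though your fix was not.
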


\begin{proof}  If $\omega\in\l b\in\dcl^{\cu M}(A)\rr$, let $f(\omega)$ be the first pair
$\<\theta_\omega,\vec a_\omega\>$ such that $\theta_\omega(u,\vec v)$ is
functional, $\vec a_\omega\in A^{|\vec v|}$, and $\cu M\models \theta_\omega(b(\omega),\vec a_\omega(\omega))$.  Otherwise
let $f(\omega)=\<\bot,\emptyset\>$.  The result then follows from Lemma \ref{l-pointwisemeasurable}.
\end{proof}

\begin{lemma}  \label{l-pointwisedefinable}
If $\bo b$ is definable over $ A$ in $\cu N$, then $\bo b$ is pointwise definable over some countable subset of $A$.
\end{lemma}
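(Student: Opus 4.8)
The plan is to argue by contradiction, using Fact~\ref{f-definable}. By Fact~\ref{f-definable}(3) I first fix a \emph{countable} $A_0\subseteq A$ with $\bo b\in\dcl^{\cu N}(A_0)$; it then suffices to show that $\bo b$ is pointwise definable over $A_0$. Assume not. By Lemma~\ref{l-pointwisemeasurable} the event $\sa D:=\Omega\setminus\l b\in\dcl^{\cu M}(A_0)\rr$ belongs to $\cu B$, and the failure of pointwise definability says precisely that $\mu(\sa D)>0$. Fix an enumeration $\langle(\psi_i,\vec a_i):i\ge 1\rangle$ of all pairs consisting of a first order formula $\psi_i(u,\vec v)$ together with a tuple $\vec a_i$ from $A_0$ of matching length (with the equality formula among the $\psi_i$). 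The goal is to produce, in a sufficiently saturated $\cu N^*\succ\cu N$, an element $\bo b'\ne\bo b$ realizing $\tp^{\cu N}(\bo b/A_0)$ over $A_0$; since $\bo b$ also realizes this type over $A_0$ in $\cu N^*$, this contradicts Fact~\ref{f-definable}(2).

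I build $\bo b'$ out of finitary approximations inside $\cu P$ together with one use of saturation. The key observation is: for $\omega\in\sa D$ and every $m$, the set of $u\in\cu M$ realizing the same truth values as $b(\omega)$ on $\psi_1(u,\vec a_1(\omega)),\dots,\psi_m(u,\vec a_m(\omega))$ has at least two elements. Indeed it contains $b(\omega)$, and it is definable over $\{\vec a_1(\omega),\dots,\vec a_m(\omega)\}\subseteq A_0(\omega)$ (the values $\psi_i(b(\omega),\vec a_i(\omega))$ merely decide which literal to use), so if it were a singleton we would get $b(\omega)\in\dcl^{\cu M}(A_0(\omega))$ via a functional formula, contradicting $\omega\in\sa D$. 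Letting $\theta_m(u,w_0,\vec y_1,\dots,\vec y_m)$ be $u\ne w_0\wedge\bigwedge_{i\le m}\bigl(\psi_i(u,\vec y_i)\leftrightarrow\psi_i(w_0,\vec y_i)\bigr)$, this gives $\sa D\sqsubseteq\sa F_m:=\l(\exists u)\theta_m(u,b,\vec a_1,\dots,\vec a_m)\rr$, an event of $\cu B$ by Definition~\ref{d-nice}(3). By Definition~\ref{d-nice}(5) there is $c_m\in\cu K$ with $\l\theta_m(c_m,b,\vec a_1,\dots,\vec a_m)\rr=\sa F_m$, and after replacing $c_m$ by a characteristic function of $\sa F_m$ with respect to $c_m,b$ (Lemma~\ref{l-glue}) I may assume $c_m$ agrees with $b$ off $\sa F_m$. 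Writing $\bo c_m$ for its reduction: $d_\BK(\bo c_m,\bo b)=\mu\l\bo c_m\ne\bo b\rr\ge\mu(\sa F_m)\ge\mu(\sa D)$, while $c_m$ equals $b$ off $\sa F_m$ and realizes the same $\psi_1,\dots,\psi_m$-truth values over $A_0(\omega)$ as $b(\omega)$ on $\sa F_m$, so $\l\psi_i(\bo c_m,\vec a_i)\rr=\l\psi_i(\bo b,\vec a_i)\rr$ in $\hat{\cu B}$ for every $i\le m$.

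Next I pass from equality of events to equality of types using strong quantifier elimination (Fact~\ref{f-qe}): any $L^R$-formula $\Phi(x,\vec e)$ with $\vec e$ from $A_0$ is $T^R$-equivalent to a quantifier-free one, and the value of a quantifier-free formula at $x=\bo d$ is determined by the finitely many events $\l\psi(\bo d,\vec a)\rr$ occurring in it (note $d_\BK(\bo d,\vec a)=\mu\l\bo d\ne\vec a\rr$ is of this form). Hence, once the finitely many pairs $(\psi,\vec a)$ occurring in $\Phi$ all appear among the first $m$ of the enumeration, $\Phi(\bo c_m,\vec e)^{\cu N}=\Phi(\bo b,\vec e)^{\cu N}$. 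Therefore the partial type over $A_0\cup\{\bo b\}$
$$q(x)\ :=\ \tp^{\cu N}(\bo b/A_0)\ \cup\ \{\,d_\BK(x,\bo b)\ge\mu(\sa D)\,\}$$
is finitely satisfiable in $\cu N$ (any finite fragment mentions only finitely many pairs $(\psi,\vec a)$ and is satisfied exactly by $\bo c_m$ for $m$ large), hence consistent, hence realized by some $\bo b'$ in an $\aleph_1$-saturated elementary extension $\cu N^*\succ\cu N$ ($\aleph_1$-saturation suffices since $A_0\cup\{\bo b\}$ is countable). Since $\cu N\prec\cu N^*$ and $\bo b'\models q$, we get $\tp^{\cu N^*}(\bo b'/A_0)=\tp^{\cu N}(\bo b/A_0)$ and $d_\BK(\bo b',\bo b)\ge\mu(\sa D)>0$, so $\bo b'\ne\bo b$; by Fact~\ref{f-definable}(2) this contradicts $\bo b\in\dcl^{\cu N}(A_0)$, and the proof is complete.

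The step I expect to be the main obstacle is the construction of the finitary approximations $\bo c_m$: there is in general no measurable way to pick, for each $\omega\in\sa D$, a genuine conjugate of $b(\omega)$ over $A_0(\omega)$, so one must settle for conjugates agreeing on only finitely many formulas at a time, which is exactly what fullness (Definition~\ref{d-nice}(5)) supplies, and leave the assembly of an honest conjugate of $\bo b$ to saturation of $\cu N^*$. The remaining points — that $\sa D\sqsubseteq\sa F_m$, the reduction to quantifier-free formulas, and the finite-satisfiability count — are routine.
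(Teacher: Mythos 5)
Your proof is correct and follows essentially the same route as the paper's: assume pointwise definability fails on a set of positive measure, use fullness to build finite approximations that agree with $\bo b$ on any prescribed finite list of formulas yet differ from $\bo b$ on a set of measure at least $\mu(\sa D)$, and then conclude by compactness/saturation together with Fact~\ref{f-definable}(2). Your extra gluing step (forcing $c_m$ to agree with $b$ off $\sa F_m$ so that $\l\psi_i(\bo c_m,\vec a_i)\rr=\l\psi_i(\bo b,\vec a_i)\rr$ exactly) makes explicit a point the paper's proof passes over quickly when it asserts $\l\chi_i(b,\vec a)\rr\doteq\l\chi_i(d,\vec a)\rr$.
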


\begin{proof}  By Fact \ref{f-definable} (3), we may assume that $A$ is countable.
By Lemma \ref{l-pointwisemeasurable}, the measure $r:= \mu(\l  b\in\dcl^{\cu M}(A)\rr)$ exists.
Suppose $\bo b$ is not pointwise definable over $A$.  Then $r<1$.  For each finite collection $\chi_1(u,\vec v),\ldots,\chi_n(u,\vec v)$
of first order formulas, each tuple $\vec{\bo a}\in A^{<\BN}$, and each $\omega\in\Omega\setminus \l  b\in\dcl^{\cu M}(A)\rr$, the
sentence
$$ (\exists u)[u\ne b(\omega)\wedge\bigwedge_{i=1}^n [\chi_i(b(\omega),\vec a(\omega))\leftrightarrow \chi_i(u,\vec a(\omega))] $$
holds in $\cu M$, because $b(\omega)$ is not definable over $A(\omega)$.  Therefore in $\cu P$ we have
$$ \mu\l (\exists u)[u\ne b\wedge\bigwedge_{i=1}^n [\chi_i(b,\vec a)\leftrightarrow \chi_i(u,\vec a)]\rr\ge 1-r.$$
By condition \ref{d-nice} (5), there is an element $\bo d\in\hat{\cu K}$ such that
$$ \mu\l d\ne b\wedge\bigwedge_{i=1}^n [\chi_i(b,\vec a)\leftrightarrow \chi_i(d,\vec a)]\rr\ge 1-r.$$
It follows that
$ \mu(\l d\ne b\rr)\ge 1-r $, and $\l \chi_i(b,\vec a)\rr\doteq\l \chi_i(d,\vec a)\rr$ for each $i\le n$.
By compactness, in some elementary extension of $\cu N$ there is an element $\bo d$ such that
$\mu\l\bo d\ne\bo b\rr\ge 1-r$, and $\l\chi(\bo b,\vec{\bo a})\rr=\l\chi(\bo d,\vec{\bo a})\rr$ for each
first order formula $\chi(u,\vec v)$.  Then $\bo d\ne\bo b$, and by quantifier elimination, $\tp(\bo d/A)=\tp(\bo b/A)$.
Hence by Fact \ref{f-definable} (2), $\bo b\notin\dcl( A)$.
\end{proof}

The following example shows that the converse of Lemma \ref{l-pointwisedefinable} fails badly.

\begin{ex} Let $\cu M$ be a finite structure with a constant symbol for every element.  Then every element of $\cu K$ is
pointwise definable without parameters, but the only elements of $\hat{\cu K}$ that are definable without parameters are
the equivalence classes of constant functions $b\colon\Omega\to\cu M$.
\end{ex}

\subsection{Definability in Sort $\BK$}

We will now give necessary and sufficient conditions for an element of $\bo b\in \hat{\cu K}$ to be definable over a parameter set $ A\subseteq\hat K$
in $\cu N$.

\begin{thm}  \label{t-dcl}
 $\bo{b}$ is definable over $ A$ if and only if there exist
pairwise disjoint events $\{\sa E_n\colon n\in\BN\}$ such that $\sum_{n\in\BN}\mu( \sa E_n)=1$, and for each $n$, ${\sa E}_n$
is definable over $ A$, and $\bo b$ is first order definable on $\sa E_n$ over $A$.
\end{thm}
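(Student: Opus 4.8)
The plan is to prove the two directions separately, using the auxiliary notions of first order definability and pointwise definability developed in the previous subsections, together with Theorem \ref{t-definableB} on definability in sort $\BB$.

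For the \emph{backward} direction, suppose we are given pairwise disjoint events $\{\sa E_n : n \in \BN\}$ with $\sum_n \mu(\sa E_n) = 1$, each $\sa E_n$ definable over $A$, and for each $n$ a functional formula $\varphi_n(u, \vec v)$ and a tuple $\vec{\bo a}_n \in A^{<\BN}$ with $\sa E_n = \l \varphi_n(\bo b, \vec{\bo a}_n)\rr$. First I would use Lemma \ref{l-glue} (gluing / characteristic functions) repeatedly to build, for each $N$, an element $\bo b_N \in \hat{\cu K}$ that agrees with $\bo b$ on $\sa E_0 \sqcup \cdots \sqcup \sa E_N$ and with some fixed element on the complement; then $d_\BK(\bo b_N, \bo b) \le \mu(\neg(\sa E_0 \sqcup \cdots \sqcup \sa E_N)) \to 0$, so $\bo b = \lim_N \bo b_N$. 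By Fact \ref{f-definable}, since $\dcl(A)$ is metrically closed (the remark after Fact \ref{f-definableclosure}), it suffices to show each $\bo b_N \in \dcl(A)$. For this I would show that $\bo b_N$ is determined over $A$ by its type: in any $\cu N' \succ \cu N$, if $\tp^{\cu N'}(\bo d / A) = \tp^{\cu N'}(\bo b_N / A)$, then because each $\sa E_n$ is definable over $A$ and $\varphi_n$ is functional, on each $\sa E_n$ the element $\bo d$ must agree with the unique solution of $\varphi_n(u, \vec{\bo a}_n)$, hence $\l \bo d = \bo b_N \rr \sqsupseteq \sa E_n$ for each $n \le N$, and on the complement $\bo b_N$ is a fixed parameter-free... (actually the complement value is $\dcl(A)$-definable by a separate argument, or one reduces to the case where the complement value itself arises the same way); combining gives $\l \bo d = \bo b_N \rr = \top$, so $\bo d = \bo b_N$. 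Quantifier elimination (Fact \ref{f-qe}) is what lets us read "same type over $A$" as "same values $\l \chi(\bo b_N, \vec{\bo a}) \rr$ for first order $\chi$".

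For the \emph{forward} direction, suppose $\bo b \in \dcl(A)$. By Fact \ref{f-definable}(3) we may assume $A$ is countable. By Lemma \ref{l-pointwisedefinable}, $\bo b$ is pointwise definable over (a countable subset of) $A$, so $\mu(\l b \in \dcl^{\cu M}(A)\rr) = 1$, and by Lemma \ref{l-pointwisemeasurable} this event is the countable union $\bigcup \{ \l \theta(b, \vec a) \rr : \theta \text{ functional}, \vec{\bo a} \in A^{<\BN}\}$. Enumerate the countably many pairs $(\theta, \vec{\bo a})$ as $(\theta_k, \vec{\bo a}_k)_{k \in \BN}$ and set $\sa F_k = \l \theta_k(b, \vec{\bo a}_k)\rr$; then disjointify by $\sa E_k = \sa F_k \setminus (\sa F_0 \sqcup \cdots \sqcup \sa F_{k-1})$, so the $\sa E_k$ are pairwise disjoint, $\sum_k \mu(\sa E_k) = \mu(\bigcup_k \sa F_k) = 1$, and $\bo b$ is first order definable on $\sa E_k$ over $A$ (on $\sa E_k$ it satisfies $\theta_k(\bo b, \vec{\bo a}_k)$; one intersects $\sa F_k$ with finitely many complements of events of the form $\l \theta_j(\bo b, \vec{\bo a}_j)\rr$, which are first order definable over $A \cup \{\bo b\}$, and invokes the last item of Remarks \ref{r-definableover}). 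The remaining — and I expect \textbf{main} — obstacle is to show each $\sa E_k$ is \emph{definable over $A$} in $\cu N$, not merely over $A \cup \{\bo b\}$. Since $\bo b \in \dcl(A)$, we have $\dcl(A \cup \{\bo b\}) = \dcl(A)$ by Fact \ref{f-definableclosure}(1), and $\sa E_k \in \fo_\BB(A \cup \{\bo b\}) \subseteq \dcl_\BB(A \cup \{\bo b\})$; the point is that $\dcl_\BB(A \cup \{\bo b\}) = \dcl_\BB(A)$. This should follow because adding a definable element does not change the definable closure: from $\bo b \in \dcl(A)$ and Theorem \ref{t-definableB} we get $\dcl_\BB(A) = \sigma(\fo_\BB(A))$ and $\dcl_\BB(A \cup \{\bo b\}) = \sigma(\fo_\BB(A \cup \{\bo b\}))$, and one checks $\sigma(\fo_\BB(A \cup \{\bo b\})) = \sigma(\fo_\BB(A))$ — the events $\l \varphi(\bo b, \vec{\bo a})\rr$ lie in $\sigma(\fo_\BB(A))$ because $\bo b$ is pointwise definable over $A$ and hence, using Corollary \ref{c-pointwisedefinable}, each such event can be assembled from $\sigma(\fo_\BB(A))$-measurable pieces. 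Once every $\sa E_k$ is shown to be in $\dcl_\BB(A)$, the forward direction is complete.
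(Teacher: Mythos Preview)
Your forward direction is essentially the paper's argument, but you are making the final step harder than it is. Once you know $\sa E_k\in\fo_\BB(A\cup\{\bo b\})\subseteq\dcl_\BB(A\cup\{\bo b\})$ and $\bo b\in\dcl(A)$, Fact~\ref{f-definableclosure}(1) applied in the full two-sorted structure gives $\dcl_\BB(A\cup\{\bo b\})\subseteq\dcl_\BB(\dcl(A))=\dcl_\BB(A)$ immediately; there is no need to go through $\sigma$-algebras or Corollary~\ref{c-pointwisedefinable}. The paper does exactly this one-line reduction. Your proposed alternative route (showing $\l\varphi(\bo b,\vec{\bo a})\rr\in\sigma(\fo_\BB(A))$ via the measurable selection in Corollary~\ref{c-pointwisedefinable}) can be made to work, but it is extra labor that the idempotence of definable closure already buys you.

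Your backward direction has a self-inflicted gap. You already have in hand the decisive computation: if $\bo d$ has the same type as $\bo b$ over $A$ in an elementary extension, then for each $n$ the event $\l\varphi_n(\bo d,\vec{\bo a}_n)\rr$ equals $\l\varphi_n(\bo b,\vec{\bo a}_n)\rr=\sa E_n$ (because $\sa E_n\in\dcl_\BB(A)$ is a limit of events $\l\psi_k(\vec{\bo a}_k)\rr$ by Theorem~\ref{t-definableB}, and $\bo d,\bo b$ give the same distances to these), and functionality of $\varphi_n$ then forces $\sa E_n\sqsubseteq\l\bo d=\bo b\rr$. Summing over $n$ gives $\mu(\l\bo d=\bo b\rr)=1$, hence $\bo d=\bo b$, and Fact~\ref{f-definable}(2) finishes. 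That \emph{is} the paper's proof. By routing instead through approximations $\bo b_N$ you introduce the problem of what to place on the complement $\neg(\sa E_0\sqcup\cdots\sqcup\sa E_N)$ and whether that value lies in $\dcl(A)$; you flag this yourself and do not resolve it. Drop the approximations and apply the type argument to $\bo b$ directly.
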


\begin{proof}
$(\Rightarrow)$:  Suppose $\bo{b}\in \dcl(A)$.   By Lemma \ref{l-pointwisedefinable}, $\bo b$ is pointwise definable over some
countable subset $A_0$ of $A$.  The set of all events $\sa C$ such that $\bo b$ is first order definable on $\sa C$ over $A_0$
is countable, and may be arranged in a list $\{\sa C_n\colon n\in\BN\}$.   Let $\sa E_0=\sa C_0$, and
$$\sa E_{n+1}=\sa C_{n+1}\sqcap\neg(\sa C_0\sqcup\cdots\sqcup\sa C_n).$$
The events $\sa E_n$ are pairwise disjoint, and for each $n$ we have
$$\sa E_0\sqcup\cdots\sqcup\sa E_n=\sa C_0\sqcup\cdots\sqcup\sa C_n.$$
 By Remarks \ref{r-definableover}, for each $n$, $\bo b$ is first order
definable on  $\sa E_n$ over $A$.  By Lemma \ref{l-pointwisemeasurable} and pointwise definability,
$$ \sum_{n\in\BN}\mu(\sa E_n)=\lim_{n\to\infty}\mu(\sa C_0\sqcup\cdots\sqcup\sa C_n)=\mu(\l\dcl^{\cu M}(A_0)\rr)=1.$$
By Remarks \ref{r-definableover},  ${\sa E}_n$ is definable over $ A\cup\{\bo b\}$, and since $\bo b$ is definable over $ A$,
${\sa E}_n$ is definable over $ A$ by Fact \ref{f-definableclosure}.

\

$(\Leftarrow)$:  Let $\sa E_n$ be as in the theorem.  For each $n$, we have $\sa E_n=\l\theta_n(\bo b,\vec{\bo a}_n)\rr$ for some functional formula $\theta_n$
and tuple $\vec {\bo a}_n\in A^{<\BN}$.
Since $\sa E_n$ is definable over $A$, by Theorem \ref{t-definableB} there is a sequence of formulas $\psi_k(\vec v)$ and tuples $\vec{\bo a_k}\in A^{<\BN}$
such that
$$\lim_{k\to\infty}d_\BB(\l\psi_k(\vec{\bo a}_k)\rr,\l\theta_n(\bo b,\vec{\bo a})\rr)=0.$$
 Suppose $\bo d$ has the same type over $ A$ as $\bo b$ in some elementary extension ${\cu N}'$ of ${\cu N}$.  Then
$$\lim_{k\to\infty}d_\BB(\l\psi_k(\vec{\bo a}_k)\rr,\l\theta_n(\bo d,\vec{\bo a})\rr)=0.$$
Hence
$$\l\theta_n(\bo d,\vec{\bo a}_n)\rr = \l\theta_n(\bo b,\vec{\bo a}_n)\rr={\sa E}_n$$
in ${\cu N}'$.  Since $\theta_n(u,\vec v)$ is functional, we have $\l\theta_n(\bo b,\vec{\bo a})\rr\sqsubseteq\l\bo d=\bo b\rr$
for each $n$.  Then
$$\mu(\l \bo d=\bo b\rr)\ge\sum_{n\in\BN}\mu({\sa E}_n)=1,$$
so $\bo d =\bo b$.  Then by Fact \ref{f-definable}, $\bo b\in\dcl( A)$.
\end{proof}

\begin{cor}  \label{c-definableK-noparameters}
An element $\bo b \in\hat{\cu K}$ is definable without parameters if and only if $\bo b$ is first order definable
without parameters.  Thus $\dcl(\emptyset)=\fo(\emptyset)$.
\end{cor}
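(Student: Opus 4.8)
The plan is to deduce Corollary \ref{c-definableK-noparameters} directly from Theorem \ref{t-dcl} together with Corollary \ref{c-event-noparameters}. The backward implication is immediate: first order definability over $\emptyset$ implies definability over $\emptyset$ by Lemma \ref{l-firstorderdefinable} (i.e.\ $\fo(\emptyset)\subseteq\dcl(\emptyset)$). So the real content is the forward direction, and the whole point is that when $A=\emptyset$ the decomposition furnished by Theorem \ref{t-dcl} must be trivial.

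So suppose $\bo b\in\dcl(\emptyset)$. Apply Theorem \ref{t-dcl} with $A=\emptyset$: there are pairwise disjoint events $\{\sa E_n\colon n\in\BN\}$ with $\sum_n\mu(\sa E_n)=1$, each $\sa E_n$ definable over $\emptyset$, and $\bo b$ first order definable on $\sa E_n$ over $\emptyset$. By Corollary \ref{c-event-noparameters}, the only events definable over $\emptyset$ are $\top$ and $\bot$, so each $\sa E_n$ is either $\top$ or $\bot$. Since the $\sa E_n$ are pairwise disjoint and their measures sum to $1$ (and $\mu$ is atomless, so $\mu(\top)=1$, $\mu(\bot)=0$), exactly one of them, say $\sa E_m$, equals $\top$ and all the others equal $\bot$. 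For that index $m$, $\bo b$ is first order definable on $\top$ over $\emptyset$, which by the definition of $\fo$ is exactly the statement that $\bo b\in\fo(\emptyset)$.

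Combining the two directions gives $\dcl(\emptyset)=\fo(\emptyset)$, and in particular $\bo b$ is definable without parameters iff it is first order definable without parameters. I would write this up in three or four sentences. The main (very minor) obstacle is just the bookkeeping observation that pairwise disjoint events summing to measure one, each equal to $\top$ or $\bot$, forces exactly one to be $\top$; this uses that $d_\BB$ is a metric (so $\bot$ occurs genuinely, not merely up to measure zero — though here we only need the measure statement, since ``first order definable on $\bot$'' gives no information and ``first order definable on $\top$'' is what we want). One should double-check that the degenerate possibility where some $\sa E_n=\top$ but $n\neq 0$ is handled — it is, since we only need the existence of one index $m$ with $\sa E_m=\top$.
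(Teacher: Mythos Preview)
Your proof is correct and follows essentially the same route as the paper: for $(\Leftarrow)$ both invoke Lemma~\ref{l-firstorderdefinable}, and for $(\Rightarrow)$ both apply Theorem~\ref{t-dcl} with $A=\emptyset$ and then use Corollary~\ref{c-event-noparameters} to conclude that some $\sa E_n$ of positive measure must equal $\top$. The paper's write-up is slightly terser (it simply picks one $\sa E$ with $\mu(\sa E)>0$ rather than arguing that exactly one $\sa E_n$ is $\top$), but the argument is the same.
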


\begin{proof}  $(\Rightarrow)$: Suppose $\bo b\in\dcl(\emptyset)$.  By Theorem \ref{t-dcl}, there is an event
$\sa E$ such that $\mu(\sa E)>0$, ${\sa E}$ is definable without parameters, and $\bo b$ is first order definable on $\sa E$
without parameters.  By Corollary \ref{c-event-noparameters} we have ${\sa E}=\top$, so $\bo b$ is first order definable without parameters.

$(\Leftarrow)$: By Lemma \ref{l-firstorderdefinable}.
\end{proof}

\begin{cor}  \label{c-definable-finite}
If $\fo_\BB( A)$ is finite, then $\dcl_\BB( A)=\fo_\BB( A)$ and $\dcl( A)=\fo( A)$.
\end{cor}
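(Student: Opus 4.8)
The plan is to derive Corollary~\ref{c-definable-finite} from the two structural results just established, namely Theorem~\ref{t-definableB} for the event sort and Theorem~\ref{t-dcl} for the random-element sort. First I would handle the $\BB$-sort statement. By Theorem~\ref{t-definableB} we have $\dcl_\BB(A)=\cl(\fo_\BB(A))=\sigma(\fo_\BB(A))$, so it suffices to observe that when $\fo_\BB(A)$ is finite it is already a finite Boolean subalgebra of $\hat{\cu B}$, hence closed in the metric $d_\BB$ and closed under countable operations (a countable join of elements of a finite Boolean algebra is a finite join, which lies in the algebra). Therefore $\cl(\fo_\BB(A))=\sigma(\fo_\BB(A))=\fo_\BB(A)$, giving $\dcl_\BB(A)=\fo_\BB(A)$.

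Next I would turn to the $\BK$-sort statement. The inclusion $\fo(A)\subseteq\dcl(A)$ is Lemma~\ref{l-firstorderdefinable}, so the work is in the reverse inclusion. Suppose $\bo b\in\dcl(A)$. By Theorem~\ref{t-dcl} there are pairwise disjoint events $\{\sa E_n : n\in\BN\}$ with $\sum_n\mu(\sa E_n)=1$, each $\sa E_n$ definable over $A$, and $\bo b$ first order definable on each $\sa E_n$ over $A$. Since each $\sa E_n\in\dcl_\BB(A)=\fo_\BB(A)$ by the first paragraph, and $\fo_\BB(A)$ is finite, the sequence $\langle\sa E_n\rangle$ takes only finitely many values; being pairwise disjoint, all but finitely many $\sa E_n$ must equal $\bot$. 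So the partition is essentially finite, and after discarding the $\bot$ terms we may assume we have finitely many pairwise disjoint events $\sa E_0,\dots,\sa E_m$ with $\sum_{n\le m}\mu(\sa E_n)=1$, i.e.\ $\sa E_0\sqcup\cdots\sqcup\sa E_m\doteq\top$, and a functional formula $\theta_n(u,\vec v)$ and tuple $\vec{\bo a}_n\in A^{<\BN}$ with $\sa E_n=\l\theta_n(\bo b,\vec{\bo a}_n)\rr$ for each $n\le m$.

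The remaining task is to assemble a single functional formula witnessing $\bo b\in\fo(A)$. The natural move is to form $\chi(u,\vec v_0,\dots,\vec v_m):=\bigvee_{n\le m}\big(\theta_n(u,\vec v_n)\wedge\bigwedge_{k<n}\neg(\exists u')\theta_k(u',\vec v_k)\big)$, so that on the piece where $\theta_n$ holds (and no earlier $\theta_k$ is satisfiable) $\chi$ picks out the unique element named by $\theta_n$; making the disjuncts mutually exclusive via the guards ensures $T\models(\forall\vec v)(\exists^{\le 1}u)\chi(u,\vec v)$, so $\chi$ is functional. Then $\l\chi(\bo b,\vec{\bo a}_0,\dots,\vec{\bo a}_m)\rr$ contains each $\sa E_n$ up to measure zero adjustments and hence equals $\top$, which gives $\bo b\in\fo(A)$ and completes the proof. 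The main obstacle I anticipate is the bookkeeping in this last step: one must check that the guarded disjunction really is functional over $T$ and that, pointwise almost everywhere, the element $b(\omega)$ satisfies $\chi$ — this uses that the $\sa E_n$ cover $\top$ and that on $\sa E_n$ the formula $\theta_n(b(\omega),\vec a_n(\omega))$ holds while $\bo b$ being first order definable on $\sa E_n$ forces uniqueness there. An alternative, perhaps cleaner, route avoiding the explicit formula is to argue type-theoretically as in the $(\Leftarrow)$ direction of Theorem~\ref{t-dcl}: if $\tp^{\cu N'}(\bo d/A)=\tp^{\cu N'}(\bo b/A)$ then $\l\theta_n(\bo d,\vec{\bo a}_n)\rr=\sa E_n$ for all $n\le m$, and since the $\sa E_n$ cover $\top$ and each $\theta_n$ is functional, $\l\bo d=\bo b\rr=\top$, so $\bo d=\bo b$; but to conclude $\bo b\in\fo(A)$ rather than merely $\bo b\in\dcl(A)$ one still needs the single functional formula, so the construction of $\chi$ appears unavoidable.
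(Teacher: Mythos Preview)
Your overall strategy matches the paper's exactly: use Theorem~\ref{t-definableB} for the $\BB$-sort, then Theorem~\ref{t-dcl} plus finiteness of $\fo_\BB(A)$ to reduce to a finite partition, and finally assemble one functional formula. The gap is in that last assembly step: your formula $\chi$ is functional, but it need not satisfy $\l\chi(\bo b,\vec{\bo a}_0,\ldots,\vec{\bo a}_m)\rr=\top$.

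The problem is with the guards. On $\sa E_n$ you know $\theta_n(b(\omega),\vec a_n(\omega))$ holds and, by disjointness, $\theta_k(b(\omega),\vec a_k(\omega))$ fails for $k<n$; but your guard requires the stronger fact $\neg(\exists u')\theta_k(u',\vec a_k(\omega))$, which can easily fail. Concretely, take $T$ with two definable constants $c_0\ne c_1$, let $\theta_0(u)\equiv(u=c_0)$, $\theta_1(u)\equiv(u=c_1)$, and let $\bo b$ equal $c_0$ on half of $\Omega$ and $c_1$ on the other half. Then your $\chi(u)$ reduces to $u=c_0$, since $(\exists u')\theta_0(u')$ always holds, so $\l\chi(\bo b)\rr=\sa E_0\ne\top$.

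The fix is to exploit the fact you already recorded but did not use: each $\sa E_n\in\fo_\BB(A)$, so $\sa E_n=\l\psi_n(\vec{\bo a})\rr$ for some first order $\psi_n$ \emph{in the parameters only}. One can arrange the $\psi_n$ to be pairwise $T$-inconsistent with $T\models\bigvee_n\psi_n$. Then $\bigwedge_{n\le m}(\psi_n(\vec v)\to\theta_n(u,\vec v_n))$ is functional (exactly one $\psi_n$ holds, and there $\theta_n$ pins down $u$) and evaluates to $\top$ at $(\bo b,\vec{\bo a})$. This is precisely the paper's construction; the key point is that the guards must depend on $\vec v$ alone, not on $u$ via $(\exists u')\theta_k$.
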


\begin{proof}  $\dcl_\BB( A)=\fo_\BB( A)$ follows from Theorem \ref{t-definableB}.  Lemma \ref{l-firstorderdefinable}
gives $\dcl( A)\supseteq\fo( A)$.  For the other inclusion, suppose $\bo b\in\dcl( A)$.  By Theorem \ref{t-dcl}, there is a finite
partition $\sa E_0,\ldots,\sa E_k$ of $\top$, a tuple $\vec {\bo a}\in A^{<\BN}$, and first order formulas $\psi_i(\vec v)$ such that
$\sa E_i=\l\psi_i(\vec{\bo a})\rr$ and $\bo b$ is first order definable on $\sa E_i$.  Then there are functional formulas $\varphi_i(u,\vec v)$
such that $\sa E_i\doteq\l\varphi_i(\bo b,\vec{\bo a})\rr$.  We may take the formulas $\psi_i(\vec v)$ to be pairwise inconsistent and such that
$T\models\bigvee_{i=0}^n\psi(\vec v)$.  Then $\bigwedge_{i=0}^n (\psi_i(\vec v)\rightarrow\varphi_i(u,\vec v))$ is a functional formula such that
$$\l\bigwedge_{i=0}^n (\psi_i(\vec{\bo a})\rightarrow\varphi_i(\bo b,\vec {\bo a}))\rr=\top,$$
so $\bo b$ is first order definable over $ A$.
\end{proof}

\begin{cor}  \label{c-dcl2}
$\bo{b}$ is definable over $ A$ if and only if:
\begin{enumerate}
\item $\bo b$ is pointwise definable over some countable subset of $A$;
\item for each functional formula $\varphi(u,\vec v)$ and tuple $\vec {\bo a}\in A^{<\BN}$,  $\l \varphi (\bo b, \vec{\bo a})\rr$
is definable over $ A$.
\end{enumerate}
\end{cor}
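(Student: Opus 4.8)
The plan is to read this corollary off Theorem \ref{t-dcl}, using Lemma \ref{l-pointwisedefinable} and Theorem \ref{t-definableB} to translate between the two descriptions of definability in sort $\BK$; I do not expect to need any genuinely new idea.

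For the forward direction I would assume $\bo b\in\dcl(A)$. Condition (1) is then immediate from Lemma \ref{l-pointwisedefinable}. For condition (2) I would fix a functional formula $\varphi(u,\vec v)$ and a tuple $\vec{\bo a}\in A^{<\BN}$ and observe that $\l\varphi(\bo b,\vec{\bo a})\rr$ belongs to $\fo_\BB(A\cup\{\bo b\})$; Theorem \ref{t-definableB} then makes it definable over $A\cup\{\bo b\}$, and since $\bo b\in\dcl(A)$, idempotence of definable closure (Fact \ref{f-definableclosure}, applied in $\cu N$ under the conventions set up after Remark \ref{r-sortK-definability}) upgrades this to definability over $A$.

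For the converse I would assume (1) and (2) and reprise the disjointification from the proof of the $(\Rightarrow)$ direction of Theorem \ref{t-dcl}. Using (1) and Fact \ref{f-definable} (3) I would fix a countable $A_0\subseteq A$ over which $\bo b$ is pointwise definable, enumerate as $\{\sa C_n\colon n\in\BN\}$ the countably many events on which $\bo b$ is first order definable over $A_0$, and set $\sa E_0=\sa C_0$ and $\sa E_{n+1}=\sa C_{n+1}\sqcap\neg(\sa C_0\sqcup\cdots\sqcup\sa C_n)$. By Remarks \ref{r-definableover}, each $\sa C_i$ with $i\le n$ is first order definable over $A_0\cup\{\bo b\}$, hence so is the Boolean combination $\neg(\sa C_0\sqcup\cdots\sqcup\sa C_n)$, and so $\bo b$ is first order definable on each $\sa E_n$ over $A_0$, hence over $A$; moreover Lemma \ref{l-pointwisemeasurable} together with pointwise definability gives $\sum_n\mu(\sa E_n)=\mu(\l b\in\dcl^{\cu M}(A_0)\rr)=1$. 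Writing $\sa E_n=\l\varphi_n(\bo b,\vec{\bo a}_n)\rr$ with $\varphi_n$ functional and $\vec{\bo a}_n\in A^{<\BN}$, hypothesis (2) makes every $\sa E_n$ definable over $A$, so Theorem \ref{t-dcl} $(\Leftarrow)$ yields $\bo b\in\dcl(A)$.

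I do not anticipate a real obstacle: the statement is essentially a repackaging of Theorem \ref{t-dcl}. The one place that needs care is the closure step in the forward direction, where one uses idempotence of definable closure in the two-sorted structure $\cu N$ and must keep straight that although $A$ consists of sort-$\BK$ parameters it is nonetheless legitimate, by the conventions after Remark \ref{r-sortK-definability}, to speak of the sort-$\BB$ definable closure of $A\cup\{\bo b\}$. Everything else is bookkeeping, mainly checking that the events $\sa E_n$ produced by the disjointification really do have the form $\l\varphi_n(\bo b,\vec{\bo a}_n)\rr$ with $\varphi_n$ functional, which is exactly what the last two clauses of Remarks \ref{r-definableover} provide once one recalls that $\fo_\BB$ of any parameter set is a Boolean subalgebra of $\hat{\cu B}$.
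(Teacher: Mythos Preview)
Your proposal is correct and follows essentially the same route as the paper's proof: the forward direction uses Lemma \ref{l-pointwisedefinable} for (1) and idempotence of definable closure (Fact \ref{f-definableclosure}) for (2), and the converse disjointifies the events on which $\bo b$ is first order definable (via Remarks \ref{r-definableover} and Lemma \ref{l-pointwisemeasurable}) and feeds them into Theorem \ref{t-dcl}. The only minor remark is that your invocation of Fact \ref{f-definable} (3) in the converse is unnecessary, since hypothesis (1) already hands you the countable subset $A_0$ directly.
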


\begin{proof}
$(\Rightarrow)$:  Suppose $\bo b \in\dcl( A)$.  Then (1) holds by Lemma \ref{l-pointwisedefinable}.  $\l \varphi (\bo b, \vec{\bo a})\rr$ is obviously definable over $ A\cup\{\bo b\}$, so $\l \varphi (\bo b, \vec{\bo a})\rr$ is definable over $ A$ by Fact \ref{f-definableclosure},
and thus (2) holds.

$(\Leftarrow)$:  Assume conditions (1) and (2).  By (1) and Lemma \ref{l-pointwisemeasurable}, there is a sequence of
functional formulas $\theta_n(u,\vec v)$ and tuples $\vec{\bo a}_n\in A^{<\BN}$ such that
$$\l b\in\dcl^{\cu M}(A)\rr= \bigcup_{n\in\BN}\l\theta_n(b,\vec a_n)\rr\doteq\Omega.$$
Let $\sa E_n=\l\theta_n(\bo b,\vec{\bo a}_n)\rr$, so $\bo b$ is first order definable on $\sa E_n$ over $A$.  By Remark \ref{r-definableover},
we may take the $\sa E_n$ to be pairwise disjoint, and thus $\sum_{n\in\BN}\mu(\sa E_n)=1$.
By (2), ${\sa E}_n$ is definable over $ A$ for each $n$. Then by Theorem \ref{t-dcl}, $\bo b \in\dcl( A)$.
\end{proof}

\begin{cor}  \label{c-dcl3}   $\bo{b}$ is definable over $ A$ if and only if:
\begin{enumerate}
\item $b$ is pointwise definable over some countable subset of $A$;
\item $\fo_\BB( A\cup\{\bo b\})\subseteq\dcl_\BB( A)$.
\end{enumerate}
\end{cor}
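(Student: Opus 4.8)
The plan is to deduce Corollary \ref{c-dcl3} from Corollary \ref{c-dcl2} by showing that the two versions of condition (2) are equivalent in the presence of condition (1), or more precisely that the conjunction of (1) and (2) in each corollary describes the same class of $\bo b$. Since condition (1) is literally the same in both statements, the whole task reduces to relating the condition ``$\l\varphi(\bo b,\vec{\bo a})\rr\in\dcl_\BB(A)$ for every functional $\varphi$ and every $\vec{\bo a}\in A^{<\BN}$'' (call it $(2')$) to the condition ``$\fo_\BB(A\cup\{\bo b\})\subseteq\dcl_\BB(A)$'' (call it $(2'')$).

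First I would prove the easy direction $(2'')\Rightarrow(2')$: if $\varphi(u,\vec v)$ is functional and $\vec{\bo a}\in A^{<\BN}$, then $\l\varphi(\bo b,\vec{\bo a})\rr$ is by definition an event of the form $\l\psi(\vec{\bo c})\rr$ with $\vec{\bo c}$ a tuple from $A\cup\{\bo b\}$ (namely $\psi(\vec v,u)=\varphi(u,\vec v)$ and $\vec{\bo c}=(\vec{\bo a},\bo b)$), hence $\l\varphi(\bo b,\vec{\bo a})\rr\in\fo_\BB(A\cup\{\bo b\})\subseteq\dcl_\BB(A)$. Thus $(2'')$ implies $(2')$ outright, with no need for (1). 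Consequently, if $\bo b$ satisfies (1) and $(2'')$ then it satisfies (1) and $(2')$, so $\bo b\in\dcl(A)$ by Corollary \ref{c-dcl2}.

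For the converse, I would argue that if $\bo b\in\dcl(A)$ then $(2'')$ holds. This is exactly the monotonicity-of-$\fo_\BB$-under-$\dcl$ phenomenon: $\bo b\in\dcl(A)$ gives $\dcl(A\cup\{\bo b\})=\dcl(A)$ (for sort $\BK$) and likewise for sort $\BB$, by Fact \ref{f-definableclosure}(1); and trivially $\fo_\BB(A\cup\{\bo b\})\subseteq\dcl_\BB(A\cup\{\bo b\})$ since every first-order-definable event over a parameter set is definable over that set. Combining, $\fo_\BB(A\cup\{\bo b\})\subseteq\dcl_\BB(A\cup\{\bo b\})=\dcl_\BB(A)$, which is $(2'')$. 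Condition (1) then follows from $\bo b\in\dcl(A)$ by Lemma \ref{l-pointwisedefinable}. So ``$\bo b\in\dcl(A)$'' implies ``(1) and $(2'')$'', and the previous paragraph gives the reverse implication; this establishes the corollary.

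The only point requiring a little care — and the place I'd expect a referee to want a word of justification — is the claim that $\bo b\in\dcl(A)$ forces $\dcl_\BB(A\cup\{\bo b\})=\dcl_\BB(A)$ at the level of sort $\BB$; one must check that adjoining a definable element of sort $\BK$ does not enlarge the sort-$\BB$ definable closure. This follows because $\bo b\in\dcl^\cu N(A)$, so $A\cup\{\bo b\}\subseteq\dcl^\cu N(A)$ as subsets of $\cu N$ (ignoring sorts), whence $\dcl^\cu N(A\cup\{\bo b\})\subseteq\dcl^\cu N(\dcl^\cu N(A))=\dcl^\cu N(A)$ by Fact \ref{f-definableclosure}(1); intersecting with sort $\BB$ gives $\dcl_\BB(A\cup\{\bo b\})\subseteq\dcl_\BB(A)$, and the reverse inclusion is monotonicity. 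With that observation in hand the proof is a short chain of set inclusions, and I would write it as such, citing Corollary \ref{c-dcl2}, Fact \ref{f-definableclosure}, and Lemma \ref{l-pointwisedefinable}.
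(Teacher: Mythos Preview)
Your proposal is correct, and it matches the paper's intent: the paper states Corollary~\ref{c-dcl3} without proof, evidently regarding it as an immediate consequence of Corollary~\ref{c-dcl2}, and your argument is precisely the natural way to fill in that step. The only substantive point is the one you flagged yourself---that $\bo b\in\dcl(A)$ forces $\dcl_\BB(A\cup\{\bo b\})=\dcl_\BB(A)$---and your justification via Fact~\ref{f-definableclosure}(1) is fine.
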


\begin{thm}  \label{t-separable}
$\bo b$ is definable over $ A$ if and only if $\bo b=\lim_{m\to\infty} \bo b_m$, where each $\bo b_m$
is first-order definable over $ A$.  Thus $\dcl( A)=\cl(\fo( A))$.
\end{thm}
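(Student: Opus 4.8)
The plan is to deduce both directions from Theorem \ref{t-dcl} together with the completeness of the metrics. For the easy direction ($\Leftarrow$), suppose $\bo b=\lim_m\bo b_m$ with each $\bo b_m$ first order definable over $A$. By Lemma \ref{l-firstorderdefinable}, each $\bo b_m\in\dcl(A)$, so $\bo b\in\cl(\dcl(A))$; but $\dcl(A)$ is metrically closed (this is noted in the text just after Fact \ref{f-definableclosure}), so $\bo b\in\dcl(A)$. This simultaneously shows $\cl(\fo(A))\subseteq\dcl(A)$, and the inclusion $\fo(A)\subseteq\dcl(A)$ is just Lemma \ref{l-firstorderdefinable} again.

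For the main direction ($\Rightarrow$), suppose $\bo b\in\dcl(A)$ and fix $\varepsilon>0$; I want a single element $\bo c$ that is first order definable over $A$ with $d_\BK(\bo b,\bo c)<\varepsilon$. Apply Theorem \ref{t-dcl} to get pairwise disjoint events $\{\sa E_n:n\in\BN\}$ with $\sum_n\mu(\sa E_n)=1$, each $\sa E_n$ definable over $A$, and for each $n$ a functional formula $\theta_n(u,\vec v)$ and tuple $\vec{\bo a}_n\in A^{<\BN}$ with $\sa E_n=\l\theta_n(\bo b,\vec{\bo a}_n)\rr$. Choose $N$ so large that $\mu(\sa E_0\sqcup\cdots\sqcup\sa E_N)>1-\varepsilon/2$. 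The residual event $\sa F=\neg(\sa E_0\sqcup\cdots\sqcup\sa E_N)$ need not be first order definable, but by Theorem \ref{t-definableB} it is the $d_\BB$-limit of events $\l\psi_k(\vec{\bo a}_k)\rr$ with $\psi_k$ first order and $\vec{\bo a}_k\in A^{<\BN}$; choose one such $\l\psi(\vec{\bo a})\rr$ with $d_\BB(\sa F,\l\psi(\vec{\bo a})\rr)<\varepsilon/2$. Now I build $\bo c$ by "cases": on $\sa E_n$ ($n\le N$) let $\bo c$ be the unique element picked out by $\theta_n$, and on the rest of the space let $\bo c$ agree with some fixed element of $A$ (using Corollary \ref{c-two} and Lemma \ref{l-glue}, or more directly an iterated characteristic-function construction). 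Concretely, working in the nice randomization $\cu P$, I modify $b$ pointwise: set $c(\omega)=b(\omega)$ for $\omega\in\sa E_0\cup\cdots\cup\sa E_N$ (recall $\sa E_n$ is here honestly $\l\theta_n(\bo b,\vec{\bo a}_n)\rr$ up to measure zero, so $\theta_n(b(\omega),\vec a_n(\omega))$ holds there), and $c(\omega)=a_0(\omega)$ elsewhere, where $\bo a_0\in A$ is fixed; then $\bo c\in\hat{\cu K}$ and $d_\BK(\bo b,\bo c)=\mu(\l\bo b\ne\bo c\rr)\le\mu(\sa F)<\varepsilon/2$.

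It remains to check that $\bo c$ is first order definable over $A$. Take the functional formula
$$\chi(u,\vec v):\quad \bigwedge_{n=0}^{N}\bigl(\psi_n'(\vec v)\rightarrow\theta_n(u,\vec v)\bigr)\ \wedge\ \Bigl(\neg\bigvee_{n=0}^N\psi_n'(\vec v)\rightarrow u=v_0\Bigr),$$
where, for the parameters, I combine the tuples $\vec{\bo a}_n$ (for $n\le N$), the element $\bo a_0$, and the parameters defining the events $\sa E_n$ into one tuple $\vec{\bo a}$; here I should first use Theorem \ref{t-definableB} (or the fact that $\sa E_n\in\dcl_\BB(A)$) to replace each $\sa E_n$ by a first order definable event $\l\psi_n'(\vec{\bo a})\rr$ close to it, then re-run the pointwise construction with these $\psi_n'$ in place of the $\sa E_n$, absorbing the extra error into $\varepsilon$; one also arranges the $\psi_n'$ pairwise disjoint in the usual way. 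Then $\chi$ is functional because $\theta_n$ is, and $\l\chi(\bo c,\vec{\bo a})\rr=\top$ by construction, so $\bo c\in\fo(A)$. Letting $\varepsilon=1/m$ produces the sequence $\bo b_m\to\bo b$, and the displayed equation $\dcl(A)=\cl(\fo(A))$ follows from the two inclusions.

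The main obstacle is bookkeeping rather than conceptual: the events $\sa E_n$ supplied by Theorem \ref{t-dcl} are only known to be in $\dcl_\BB(A)=\cl(\fo_\BB(A))$, not literally first order definable, so one cannot directly glue along them inside a single first order formula; the fix is to approximate them by genuinely first order definable events first, at the cost of a small extra measure error, and only then glue. Care is needed to make all the approximating events simultaneously first order definable (a common tuple of parameters) and pairwise disjoint, and to verify that the resulting $\chi$ is genuinely functional — but each of these is routine given Theorem \ref{t-definableB}, Lemma \ref{l-glue}, and the perfect-witnesses property.
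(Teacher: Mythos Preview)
Your approach is essentially the paper's: use Theorem \ref{t-dcl} to get the $\sa E_n$, truncate to finitely many, approximate each by an event in $\fo_\BB(A)$ via Theorem \ref{t-definableB}, and glue into a single functional formula; the $(\Leftarrow)$ direction is identical (Lemma \ref{l-firstorderdefinable} plus closedness of $\dcl(A)$). Two small points you leave implicit that the paper makes explicit: first, the construction needs a fixed $\bo a_0\in A$, so the case $A=\emptyset$ must be handled separately (the paper invokes Corollary \ref{c-definableK-noparameters}); second, when you replace $\sa E_n$ by $\l\psi_n'(\vec{\bo a})\rr$ you must also arrange $\l\psi_n'(\vec{\bo a})\rr\sqsubseteq\l(\exists u)\theta_n(u,\vec{\bo a})\rr$ (just intersect), since otherwise on $\l\psi_n'(\vec{\bo a})\rr\setminus\sa E_n$ there may be no element satisfying $\theta_n$ and your $\chi$ would not have $\l\chi(\bo c,\vec{\bo a})\rr=\top$.
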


\begin{proof}
$(\Rightarrow)$: Suppose that $\bo b\in \dcl( A)$.  If $A$ is empty, then $\bo b$ is already first order definable from $ A$
by Corollary \ref{c-definableK-noparameters}.  Assume $A$ is not empty and let $\bo c\in A$.
Let $\{\sa E_n\colon n\in\BN\}$ be as in Theorem \ref{t-dcl}, and fix an $\varepsilon>0$.  Then for some $n$,
$\sum_{k=0}^n\mu( \sa E_k)>1-\varepsilon$.
For each $k$, ${\sa E}_k$ is definable over $ A$, so by Theorem \ref{t-definableB}, there is an event $\sa D_k\in\fo_\BB(A)$
such that $\mu(\sa D_k\triangle\sa E_k)<\varepsilon/n$.  Since the events $\sa E_k$ are pairwise disjoint, we may also take the events $\sa D_k$
to be pairwise disjoint.  We have $\sa E_k=\l\theta_k(\bo b,\vec{\bo a}_k)\rr$ for some functional $\theta_k(u,\vec v)$, so we may assume that
$\sa D_k$ has the additional properties that $\sa D_k\sqsubseteq\l(\exists ! u)\theta_k(u,\vec{\bo a}_k)\rr$, and that
$\sa D_k=\l\psi_k(\vec{\bo a}_k)\rr$ for some formula $\psi_k(\vec v)$.
Then there is a unique element $\bo d\in\hat{\cu K}$ such that
$$\begin{cases}
\cu M\models\theta_k(d(\omega),\vec a_k(\omega)) & \mbox{ if } k \le n \mbox{ and } \omega\in\l\psi_k(\vec{a}_k)\rr,\\
d(\omega) = c(\omega) & \mbox{ if } \omega\in\Omega\setminus\bigcup_{k=0}^n \l\psi_k(\vec{a}_k)\rr.
\end{cases}
$$
Then $\bo d$ is first order definable over $ A$, and $d_\BK(\bo b,\bo d)<\varepsilon$.

$(\Leftarrow)$:  This follows because first order definability implies definability (Lemma \ref{l-firstorderdefinable})
 and the set $\dcl( A)$ is metrically closed (Fact \ref{f-definableclosure} (2)).
\end{proof}

The following result was proved in [Be] by an indirect argument using Lascar types.  We give a simple  direct proof here.

\begin{prop}  For any model $\cu N=(\hat{\cu K},\hat{\cu B})$ of $T^R$ and set $ A\subseteq\hat{\cu K}$,
$\acl_\BB( A)=\dcl_\BB( A)$ and $\acl( A)=\dcl( A)$.
\end{prop}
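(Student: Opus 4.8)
Since $\dcl \subseteq \acl$ and $\dcl_\BB \subseteq \acl_\BB$ always hold in any metric structure, the content of the proposition is the two reverse inclusions $\acl_\BB(A) \subseteq \dcl_\BB(A)$ and $\acl(A) \subseteq \dcl(A)$. The strategy is to reduce both to the fact that in a model of $T^R$ there is no finite orbit of non-trivial size: if $\bo b$ has more than one but only finitely many conjugates over $A$, we can build a new conjugate by gluing along an event of intermediate measure, contradicting finiteness. More precisely, by Fact \ref{f-algebraic}(3) we may assume $A$ is countable, and by Fact \ref{f-algebraic}(2) it suffices to show that whenever the set of realizations of $p = \tp(\bo b/A)$ in an elementary extension $\cu N' \succ \cu N$ is compact, it is in fact a singleton (for then $\bo b \in \dcl(A)$ by Fact \ref{f-definable}(2), and symmetrically for sort $\BB$).

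\textbf{Sort $\BB$ first.} Suppose $\sa E \in \acl_\BB(A) \setminus \dcl_\BB(A)$. By Theorem \ref{t-definableB}, $\dcl_\BB(A) = \sigma(\fo_\BB(A))$, so $\sa E \notin \sigma(\fo_\BB(A))$; in particular there is a conjugate $\sa E'$ of $\sa E$ over $A$ with $d_\BB(\sa E, \sa E') = \delta > 0$ in some $\cu N' \succ \cu N$. The key move is to produce continuum-many pairwise-distinct conjugates. Working in a sufficiently saturated extension, pass to the nice-randomization representation (Proposition \ref{p-representation}): split the symmetric difference event $\sa E \triangle \sa E'$ into pieces of arbitrary measures and glue together the corresponding hybrids of $\sa E$ and $\sa E'$; each such hybrid realizes $p$ (because $p$ only records the measures of Boolean combinations of $\sa E$ with events in $\fo_\BB(A)$, and over the atomless algebra these measures are unchanged by the surgery), yet the hybrids form a non-discrete family, contradicting compactness of the realization set. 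Hence $\acl_\BB(A) = \dcl_\BB(A)$.

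\textbf{Sort $\BK$.} Suppose $\bo b \in \acl(A) \setminus \dcl(A)$; take $A$ countable. Since $\bo b \notin \dcl(A)$, by Corollary \ref{c-dcl2} one of two things fails: either (1) $\bo b$ is not pointwise definable over $A$, or (2) some $\l\varphi(\bo b, \vec{\bo a})\rr$ (with $\varphi$ functional) is not definable over $A$. In case (1), the construction already carried out inside the proof of Lemma \ref{l-pointwisedefinable} produces, in an elementary extension, an element $\bo d$ with $\tp(\bo d/A) = \tp(\bo b/A)$ and $\mu\l \bo d \ne \bo b\rr \geq 1 - r$ for a fixed $r < 1$; by gluing $\bo d$ and $\bo b$ along subevents of $\l \bo d \ne \bo b\rr$ of varying measure (Lemma \ref{l-glue}) we again obtain a non-discrete family of realizations of $p$, contradicting compactness. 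In case (2), the non-definable event $\sa E := \l\varphi(\bo b, \vec{\bo a})\rr$ lies in $\acl_\BB(A \cup \{\bo b\})$; but since $\bo b$ has compact orbit over $A$, $\sa E$ has compact orbit over $A$, so $\sa E \in \acl_\BB(A) = \dcl_\BB(A)$ by the previous paragraph, contradicting the failure of (2). Either way we reach a contradiction, so $\acl(A) = \dcl(A)$.

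\textbf{Expected obstacle.} The delicate point is the gluing/surgery argument that turns a single extra conjugate into a non-discrete (hence non-compact) family while keeping every hybrid a genuine realization of the type $p$. One must check that the relevant type is determined purely by measures of Boolean combinations (for sort $\BB$, immediate from quantifier elimination; for sort $\BK$, one uses quantifier elimination together with the fact established in Lemma \ref{l-pointwisedefinable} that $\tp(\bo d/A)$ is pinned down by the events $\l\chi(\bo d, \vec{\bo a})\rr$), and that the atomlessness of $(\Omega,\cu B,\mu)$ in a saturated model genuinely provides subevents of all intermediate measures. Getting the bookkeeping right so that the hybrids are pairwise at positive distance bounded away from $0$ — so the orbit really is non-compact and not merely infinite — is where the care is needed; after that, the appeals to Fact \ref{f-definable} and Fact \ref{f-algebraic} are routine.
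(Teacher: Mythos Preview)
Your plan follows the same core idea as the paper: from one nontrivial conjugate, manufacture an infinite family of conjugates whose pairwise distances are bounded below, contradicting compactness of the realization set. Two points of comparison and one genuine imprecision are worth flagging.

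\textbf{Sort $\BB$.} The paper does not restrict the surgery to $\sa E\triangle\sa E'$. It takes, by $\aleph_1$-saturation, events $\sa F_n$ that \emph{halve} every element of the Boolean algebra generated by $\fo_\BB(A)\cup\{\sa D,\sa E,\sa F_0,\dots,\sa F_{n-1}\}$, and sets $\sa D_n=(\sa D\cap\sa F_n)\cup(\sa E\setminus\sa F_n)$; then $\mu(\sa D_n\cap\sa C)=\tfrac12\mu(\sa D\cap\sa C)+\tfrac12\mu(\sa E\cap\sa C)=\mu(\sa E\cap\sa C)$ for every $\sa C\in\fo_\BB(A)$. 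Your phrase ``split $\sa E\triangle\sa E'$ into pieces of arbitrary measures \dots\ these measures are unchanged by the surgery'' is not correct as stated: for an arbitrary subevent $\sa F\sqsubseteq\sa E\triangle\sa E'$, the hybrid realizes $p$ only if $\mu(\sa F\cap(\sa E\setminus\sa E')\cap\sa C)=\mu(\sa F\cap(\sa E'\setminus\sa E)\cap\sa C)$ for every $\sa C\in\fo_\BB(A)$, which atomlessness alone does not guarantee. The fix is precisely the halving (or more generally, independence) condition the paper imposes. Also, the family you want is \emph{uniformly discrete} (pairwise distance $\ge\varepsilon>0$), not ``non-discrete''; a non-discrete family can certainly be compact.

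\textbf{Sort $\BK$.} Your case split via Corollary~\ref{c-dcl2} works, but the paper's route is shorter and avoids it. From $\bo b\in\acl(A)$ one gets directly, for \emph{every} first order $\psi$, that $\l\psi(\bo b,\vec{\bo a})\rr\in\acl_\BB(\{\bo b\}\cup A)\subseteq\acl_\BB(\acl(A))=\acl_\BB(A)=\dcl_\BB(A)$, using Fact~\ref{f-definableclosure} and the sort-$\BB$ result. Then for any conjugate $\bo c$ of $\bo b$ over $A$, the events $\l\psi(\bo b,\vec{\bo a})\rr$ and $\l\psi(\bo c,\vec{\bo a})\rr$ have the same type over $A$ and both lie in $\dcl_\BB(A)$, hence are \emph{equal}. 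It follows immediately that gluing $\bo b$ and $\bo c$ along any event preserves the type over $A$, and one finishes with countably many independent subevents $\sa D_n\sqsubseteq\l\bo b\ne\bo c\rr$ of measure $d_\BK(\bo b,\bo c)/2$. This single argument subsumes your Case~(1) (no appeal to pointwise definability or to the internal construction of Lemma~\ref{l-pointwisedefinable} is needed) and absorbs your Case~(2) as its first step.
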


\begin{proof}  By Facts \ref{f-definable} and \ref{f-algebraic}, we may assume $\cu N$ is $\aleph_1$-saturated and $ A$ is countable.
Suppose an event ${\sa E}\in\hat{\cu B}$ is not definable over $ A$.
By Fact \ref{f-definable} and $\aleph_1$-saturation there exists $\sa D\in\hat{\cu B}$ such that $\tp(\sa D/ A)=\tp(\sa E/ A)$ but
$d_\BB(\sa D,\sa E)>0$.  By $\aleph_1$-saturation again, there is a countable sequence of events $\<{\sa F}_n\colon n\in\BN\>$ in $\hat{\cu B}$
such that
$$\mu(\sa C\cap{\sa F}_n)=\mu(\sa C\setminus{\sa F}_n)=\mu(\sa C)/2$$
for each $n$ and each event $\sa C$ in the Boolean algebra generated by
$$\fo_\BB(A)\cup\{\sa D,\sa E\}\cup\{{\sa F}_k\colon k<n\}.$$
For each $n$, let
$${\sa D}_n=(\sa D\cap{\sa F}_n)\cup(\sa E\setminus{\sa F}_n).$$
Then for each $\sa C\in\fo_\BB(A)$ and $n\in\BN$, we have
$$\mu({\sa D}_n\cap\sa C)=\mu(\sa D\cap \sa C)/2 + \mu(\sa E\cap \sa C)/2 =\mu(\sa E\cap \sa C).$$
By quantifier elimination, $\tp(\sa D_n/ A)=\tp(\sa E/ A)$ for each $n\in\BN$.  Moreover, whenever $k< n$ we have
$${\sa D}_n\setminus{\sa D}_k=((\sa D\setminus{\sa D}_k)\cap{\sa F}_n)\cup((\sa E\setminus{\sa D}_k)\setminus{\sa F}_n),$$
so
$$\mu({\sa D}_n\setminus{\sa D}_k)=\mu(\sa D\setminus{\sa D}_k)/2+\mu(\sa E\setminus{\sa D}_k)/2.$$
Note that whenever $\tp(\sa D'/A)=\tp(\sa D''/A)$, we have $\mu(\sa D')=\mu(\sa D'')$, and hence
$$\mu(\sa D'\setminus\sa D'')=\mu(\sa D''\setminus\sa D')=d_\BB(\sa D',\sa D'')/2.$$
Therefore
$$d_\BB({\sa D}_n,{\sa D}_k)=d_\BB(\sa D,{\sa D}_k)/2 + d_\BB(\sa E,{\sa D}_k)/2\ge d_\BB(\sa D,\sa E)/2.$$
It follows that the set of realizations of $\tp(\sa E/A)$ is not compact, and $\sa E$ is not algebraic over $ A$.
This shows that $\acl_\BB( A)=\dcl_\BB( A)$.

Now suppose $\bo b\in \acl( A)\setminus \dcl( A)$.
There is an element $\bo c\in\hat{\cu K}$ such that $\tp(\bo b/ A)=\tp(\bo c/ A)$ but $d_\BK(\bo b,\bo c)>0$.
For each first order formula $\psi(u,\vec v)$ and $\vec{\bo a}\in A^{<\BN}$,
$\l\psi(\bo b,\vec{\bo a})\rr\in \acl_\BB(\{\bo b\}\cup A)\subseteq\acl_\BB(\acl(A))$.
By Fact \ref{f-definableclosure},
$\l\psi(\bo b,\vec{\bo a})\rr\in \acl_\BB(A)$.  By the preceding paragraph, $\l\psi(\bo b,\vec{\bo a})\rr\in \dcl_\BB( A)$.
Since $\tp(\bo b/ A)=\tp(\bo c/ A)$, we have $\tp(\l\psi(\bo b,\vec{\bo a})\rr/A)=\tp(\l\psi(\bo c,\vec{\bo a})\rr/A)$.
By Fact \ref{f-definable}, it follows that $\l\psi(\bo b,\vec{\bo a})\rr=\l\psi(\bo c,\vec{\bo a})\rr$ for every first order formula $\psi(u,\vec v)$.
Then $\tp(b(\omega)/A(\omega))=\tp(c(\omega)/A(\omega))$ for $\mu$-almost all $\omega$.  By $\aleph_1$-saturation, there are countably many independent
events $\sa D_n\in\hat{\cu B}$ such that $\sa D_n\sqsubseteq\l\bo b \ne\bo c\rr$ and $\mu(\sa D_n)=d_\BK(\bo b,\bo c)/2$.
Let $\bo c_n$ agree with $\bo c$ on $\sa D_n$ and agree with $\bo b$ elsewhere.
We have $\tp(\bo c_n/A)=\tp(\bo b/A)$ for every $n\in\BN$, and $d_\BK(\bo c_n,\bo c_k)=d_\BK(\bo b,\bo c)/2$ whenever $k< n$.  Thus the set
of realizations of $\tp(\bo b/A)$ is not compact, contradicting the fact that $\bo b\in \acl( A)$.
\end{proof}

\section{A Special Case:  $\aleph_0$-categorical theories}

\subsection{Definability and $\aleph_0$-Categoricity}

We use our preceding results to characterize $\aleph_0$-categorical theories in terms of definability in randomizations.

\begin{thm}  \label{t-categorical}
The following are equivalent:
\begin{enumerate}
\item $T$ is $\aleph_0$-categorical;
\item $\fo_\BB( A)$ is finite for every finite $ A$;
\item $\dcl_\BB( A)$ is finite for every finite $ A$;
\item $\fo_\BB(A)=\dcl_\BB(A)$ for every finite $A$;
\item $\fo( A)$ is finite for every finite $ A$;
\item $\dcl( A)$ is finite for every finite $ A$.
\item $\fo(A)=\dcl(A)$ for every finite $A$;
\end{enumerate}
\end{thm}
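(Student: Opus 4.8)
The plan is to prove the equivalences by running two linked cycles of implications --- one among the $\BB$-sort conditions (1)--(4), one among the $\BK$-sort conditions (1), (5)--(7) --- so that everything reduces to a single substantive construction. Most of the implications are routine. The implications $(1)\Rightarrow(2)$ and $(1)\Rightarrow(5)$ come from the Ryll--Nardzewski theorem: if $T$ is $\aleph_0$-categorical then for each $k$ there are only finitely many $L$-formulas in $k$ free variables up to $T$-equivalence, hence also only finitely many functional formulas in $k{+}1$ variables up to $T$-equivalence, and since $T$-equivalent formulas determine equal events $\l\varphi(\vec{\bo a})\rr$ and $T$-equivalent functional formulas determine the same first order definable element, $\fo_\BB(A)$ and $\fo(A)$ are finite for every finite $A$. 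For $(2)\Rightarrow(3),(4)$ use Theorem \ref{t-definableB}: $\dcl_\BB(A)=\sigma(\fo_\BB(A))=\cl(\fo_\BB(A))$, and a finite Boolean algebra equals both the $\sigma$-algebra and the metric closure it generates. For $(5)\Rightarrow(6),(7)$ use Theorem \ref{t-separable}: $\dcl(A)=\cl(\fo(A))$, and a finite subset of a metric space is closed. Conversely $(3)\Rightarrow(2)$ and $(6)\Rightarrow(5)$ hold because $\fo_\BB(A)\subseteq\dcl_\BB(A)$ and $\fo(A)\subseteq\dcl(A)$; and $(4)\Rightarrow(2)$ holds because if $\fo_\BB(A)=\dcl_\BB(A)=\sigma(\fo_\BB(A))$ then $\fo_\BB(A)$ is a $\sigma$-algebra, and it is countable (there are only countably many pairs made of an $L$-formula and a tuple from the finite set $A$), hence finite.

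The implication linking the two cycles is $(7)\Rightarrow(2)$. Fix a finite $A_0$ and adjoin a pair $\bo e,\bo f$ with $\l\bo e\neq\bo f\rr=\top$ (Corollary \ref{c-two}) to obtain a finite $A\supseteq A_0$. If $\fo_\BB(A_0)$ were infinite, then $\fo_\BB(A)$ would be an infinite countable Boolean algebra, so $\dcl_\BB(A)=\sigma(\fo_\BB(A))$ would be an infinite $\sigma$-algebra, hence uncountable. Sending each $\sa D\in\dcl_\BB(A)$ to its characteristic function $1_{\sa D}$ with respect to $\bo e,\bo f$ (Lemma \ref{l-glue}) injects $\dcl_\BB(A)$ into $\dcl(A)$ by Remark \ref{r-sortK-definability} and Fact \ref{f-definableclosure}, so $\dcl(A)$ would be uncountable; but $\dcl(A)=\fo(A)$ by (7), and $\fo(A)$ is countable because each of its elements is pinned down by a functional formula together with a tuple from $A$. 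This contradiction gives $(7)\Rightarrow(2)$, and combined with the routine implications it follows that (1)--(4) are equivalent and that each of (5), (6), (7) is equivalent to (1).

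It remains to prove $(2)\Rightarrow(1)$, which I would do by contraposition; this is the one nonroutine step. Assume $T$ is not $\aleph_0$-categorical. By Ryll--Nardzewski there is an $N$ with $S_N(T)$ infinite, so the Lindenbaum algebra of $L$-formulas in $N$ free variables modulo $T$ is an infinite Boolean algebra, and inside it I pick an infinite family $\{\sigma_i(v_1,\dots,v_N):i\in\BN\}$ of pairwise $T$-inconsistent, individually $T$-consistent formulas. For each $i$ we have $T\models(\exists\vec v)\,\sigma_i$, so $\l(\exists\vec v)\sigma_i(\vec v)\rr\doteq\top$, and applying Fullness (Fact \ref{f-perfectwitnesses}) $N$ times yields an $N$-tuple $\vec{\bo a}^{(i)}$ from $\hat{\cu K}$ with $\l\sigma_i(\vec{\bo a}^{(i)})\rr\doteq\top$. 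Using atomlessness of $\mu$, fix a partition $\top=\bigsqcup_{i}\sa E_i$ with $\mu(\sa E_i)=2^{-i-1}$, and for each coordinate $k\le N$ glue the $\bo a^{(i)}_k$ along this partition: iterating Lemma \ref{l-glue} produces elements agreeing with $\bo a^{(0)}_k,\dots,\bo a^{(n)}_k$ on $\sa E_0,\dots,\sa E_n$, consecutive terms differ only on a set of measure $2^{-n-2}$ so the sequence is Cauchy, and by metric completeness of $\hat{\cu K}$ it converges to an element $\bo a_k$ with $\sa E_i\sqsubseteq\l\bo a_k=\bo a^{(i)}_k\rr$ for every $i$. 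Since $\l\cdot\rr$ respects substitution, $\sa E_i\sqsubseteq\l\sigma_i(\bo a_1,\dots,\bo a_N)\rr$, while pairwise $T$-inconsistency gives $\l\sigma_i(\vec{\bo a})\rr\sqcap\l\sigma_j(\vec{\bo a})\rr=\bot$ for $i\neq j$; hence the events $\l\sigma_i(\vec{\bo a})\rr$ are pairwise disjoint and each has measure $\geq 2^{-i-1}>0$, so they are pairwise distinct, and $\fo_\BB(\{\bo a_1,\dots,\bo a_N\})$ is infinite, the desired failure of (2). The main obstacle is this construction: the delicate point is producing the parameters $\bo a_k$ inside an arbitrary model of $T^R$ rather than a saturated one, and the resolution is that Fullness, atomlessness, and metric completeness together suffice to build them by an infinite gluing, so no saturation hypothesis is needed.
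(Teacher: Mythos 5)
Your proof is correct and takes essentially the same route as the paper's: the substantive content in both is the construction, from the failure of $\aleph_0$-categoricity, of an infinite antichain of consistent formulas $\sigma_i$, realized via Fullness and glued along a partition with measures $2^{-i-1}$ into a single Cauchy-convergent tuple $\vec{\bo a}$ witnessing that $\fo_\BB(A)$ is infinite, together with the transfer to sort $\BK$ by characteristic functions with respect to a pair $\bo e,\bo f$ with $\l\bo e\ne\bo f\rr=\top$. The only differences are organizational (you run explicit implication cycles and derive $(4)\Rightarrow(2)$ and $(7)\Rightarrow(2)$ from the abstract fact that an infinite Boolean subalgebra of a measure algebra generates an uncountable $\sigma$-algebra, where the paper exhibits the uncountably many sub-suprema of its concrete antichain $\{\sa E_k\}$), and these amount to the same cardinality argument.
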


\begin{proof}
By the Ryll-Nardzewski Theorem (see [CK], Theorem 2.3.13), (1) is equivalent to

(0) For each $n$ there are only finitely many formulas in $n$ variables up to $T$-equivalence.

Assume (0) and let $A\subseteq\hat{\cu K}$ be finite.  Then (2) holds.  Moreover, there are only finitely many functional formulas
in $|A|+1$ variables, so (5) holds.  Then by Corollary \ref{c-definable-finite}, (3), (4), (6), and (7) hold.

Now assume that (0) fails.

\emph{Proof that (2) and (3) fail}:  For some $n$ there are infinitely many formulas in $n$ variables that are not $T$-equivalent.  Hence there is an
$n$-type $p$ in $T$ without parameters that is not isolated.  So there are formulas $\varphi_1(\vec v), \varphi_2(\vec v),\ldots$ in $p$
such that for each $k>0$, $T\models \varphi_{k+1}\rightarrow\varphi_k$ but the formula $\theta_k=\varphi_k\wedge\neg\varphi_{k+1}$ is consistent
with $T$.  The formulas $\theta_k$ are consistent but pairwise inconsistent.  By Fullness, for each $k>0$ there exists an $n$-tuple
$\vec {\bo b}_k\in\hat{\cu K}^n$
such that $\l\theta_k(\vec{\bo b}_k)\rr=\top$.  Since the measured algebra $(\hat{\cu B},\mu)$ is atomless, there are pairwise disjoint
events $\sa E_1,\sa E_2,\ldots$ in $\hat{\cu B}$ such that $\mu(\sa E_k)=2^{-k}$ for each $k>0$.  By applying Lemma \ref{l-glue} $k$ times, we
see that for each $k>0$ there is an $n$-tuple $\vec{\bo a}_k\in\hat{\cu K}^n$ that agrees with $\vec{\bo b}_i$ on $\sa E_i$ whenever $0<i\le k$.
Whenever $0<k\le j$, we have $\mu(\l \vec {\bo a}_k=\vec{\bo a}_j\rr)\ge 1-2^{-k}$.  So $\<\vec {\bo a}_1,\vec{\bo a}_2,\ldots\>$ is a
Cauchy sequence, and by metric completeness the limit $\vec {\bo a}=\lim_{k\to\infty}\vec{\bo a}_k$ exists in $\hat{\cu K}^n$.
Let $A=\range(\vec{\bo a})$.  For each $k>0$ we have $\sa E_k=\l\vec{\bo a}=\vec{\bo b}_k\rr=\l\theta_k(\vec{\bo a})\rr$, so $\sa E_k\in\fo_\BB(A)$.
Thus $\fo_\BB(A)$ is infinite, so (2) fails and (3) fails.

\emph{Proof that (4) fails}:  Let $\sa E_k$ be as in the preceding paragraph.
The set $\fo_\BB(A)$ is countable.  But the closure $\cl(\fo_\BB(A))$ is uncountable, because
for each set $S\subseteq\BN\setminus\{0\},$ the supremum $\bigsqcup_{k\in S}\sa E_k$ belongs to $\cl(\fo_\BB(A))$.  Thus by Theorem \ref{t-definableB},
$$\dcl_\BB(A)=\cl(\fo_\BB(A))\ne\fo_\BB(A),$$
and (4) fails.

\emph{Proof that (5), (6), and (7) fail}:  By Corollary \ref{c-two}, there exist $\bo c, \bo d\in\cu K$ such that $\l \bo c\ne\bo d\rr=\top$.
Let $C$ be the finite set $C=A\cup\{\bo c,\bo d\}$.  By Remark \ref{r-sortK-definability}, for any event $\sa D\in\fo_\BB( A)$, the
characteristic function $1_{\sa D}$ of $\sa D$ with respect to $\bo c,\bo d$ is definable over
$C$.  Moreover, we always have $d_\BK(1_{\sa D},1_{\sa E})=d_\BB(\sa D,\sa E)$.
It follows that $\fo(C)$ is infinite, so (5) and (6) fail.  To show that (7) fails, we take an event $\sa D\in \dcl_\BB(A)\setminus\fo_\BB(A)$.
By Theorem \ref{t-definableB} we have $\sa D\in\cl(\fo_\BB(A))$.  It follows that $1_{\sa D}\in\cl(\fo(C))$, so by Theorem \ref{t-separable},
$1_{\sa D}\in\dcl(C)$.  Hence $\dcl(C)$ is uncountable.  But $\fo(C)$ is countable, so (7) fails.
\end{proof}

By the Ryll-Nardzewski Theorem, if $T$ is $\aleph_0$-categorical then for each $n$, $T$ has finitely many $n$-types;
so each type $p$ in the variables $(u,\vec v)$ has an \emph{isolating formula}, that is, a formula $\varphi(u,\vec v)$ such that
$T\models \varphi(u,\vec v)\leftrightarrow \bigwedge p$.

We now characterize the definable closure of a finite set $ A\subseteq \hat{\cu K}$ in the case that $T$ is $\aleph_0$-categorical.
Hereafter, when $A$ is a finite subset of $\hat{\cu K}$, $\vec{\bo a}$ will denote a finite tuple whose range is $A$.

\begin{cor}  \label{c-cat-definable1}
Suppose that $T$ is $\aleph_0$-categorical, $\bo b\in\hat{\cu K}$, and $A$ is a finite subset of $\hat{\cu K}$.  Then
$\bo{b}\in \dcl( A)$ if and only if:
\begin{enumerate}
\item $\bo b$ is pointwise definable over $A$;
\item for every isolating formula $\varphi(u,\vec v)$, if $\mu(\l\varphi(\bo b,\vec{\bo a})\rr)>0$ then
$$\l\varphi(\bo b,\vec{\bo a})\rr=\l(\exists u)\varphi(u,\vec{\bo a})\rr.$$
\end{enumerate}
\end{cor}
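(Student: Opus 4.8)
The plan is to obtain both directions from Theorem~\ref{t-dcl} together with two features of an $\aleph_0$-categorical $T$: for finite $A$ one has $\dcl_\BB(A)=\fo_\BB(A)$ (Corollary~\ref{c-definable-finite}, since $\fo_\BB(A)$ is finite), and every complete type over $\emptyset$ has an isolating formula. Set $n=|\vec{\bo a}|$ and, for a complete $(n{+}1)$-type $p$, let $\varphi_p(u,\vec v)$ be its isolating formula. I would first record three auxiliary observations. (i) If $\varphi(u,\vec v)$ isolates a complete $(n{+}1)$-type $p$, then $(\exists u)\,\varphi(u,\vec v)$ isolates the complete $n$-type $p\restriction\vec v$: indeed $(\exists u)\,\varphi(u,\vec v)\in p\restriction\vec v$, and $T\models(\exists u)\,\varphi(u,\vec v)\to\theta(\vec v)$ for every $\theta(\vec v)\in p\restriction\vec v$, since $T\models\varphi\to\theta$ and $u$ is not free in $\theta$. (ii) If $\bo b$ is pointwise definable over $A$ and $\mu(\l\varphi(\bo b,\vec{\bo a})\rr)>0$ for an isolating formula $\varphi$, then $\varphi$ is functional: pointwise definability and positive measure yield an $\omega$ with $\cu M\models\varphi(b(\omega),\vec a(\omega))$ and $b(\omega)\in\dcl^\cu M(\vec a(\omega))$, so (padding with dummy variables) some functional $\theta(u,\vec v)$ belongs to the complete type of $(b(\omega),\vec a(\omega))$ over $\emptyset$, which equals the $(n{+}1)$-type $p$ isolated by $\varphi$; then $T\models\varphi\to\theta$ and functionality passes from $\theta$ to $\varphi$. (iii) If $\chi(\vec v)$ isolates a complete $n$-type and $\sa F=\l\psi(\vec{\bo a})\rr\in\fo_\BB(A)$ with $\sa F\sqsubseteq\l\chi(\vec{\bo a})\rr$, then $\sa F\doteq\bot$ or $\sa F\doteq\l\chi(\vec{\bo a})\rr$: on $\l\chi(\vec{\bo a})\rr$ the tuple $\vec a(\omega)$ realizes that $n$-type, so exactly one of $\psi,\neg\psi$ holds throughout $\l\chi(\vec{\bo a})\rr$.

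For $(\Rightarrow)$, suppose $\bo b\in\dcl(A)$. By Lemma~\ref{l-pointwisedefinable}, $\bo b$ is pointwise definable over a countable subset of $A$, hence over the finite set $A$ itself since $\dcl^\cu M$ is monotone in its parameters; this is (1). For (2), let $\varphi$ be an isolating formula, isolating the complete $(n{+}1)$-type $p$, with $\mu(\l\varphi(\bo b,\vec{\bo a})\rr)>0$. The event $\l\varphi(\bo b,\vec{\bo a})\rr$ is definable over $A\cup\{\bo b\}$, hence over $A$ by Fact~\ref{f-definableclosure}, hence lies in $\fo_\BB(A)$ by $\aleph_0$-categoricity. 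By (i), $\chi:=(\exists u)\,\varphi(u,\vec v)$ isolates the complete $n$-type $p\restriction\vec v$, and $\l\varphi(\bo b,\vec{\bo a})\rr\sqsubseteq\l\chi(\vec{\bo a})\rr=\l(\exists u)\,\varphi(u,\vec{\bo a})\rr$; so by (iii) and $\mu(\l\varphi(\bo b,\vec{\bo a})\rr)>0$ we get $\l\varphi(\bo b,\vec{\bo a})\rr\doteq\l(\exists u)\,\varphi(u,\vec{\bo a})\rr$.

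For $(\Leftarrow)$, assume (1) and (2), and let $p_0,\dots,p_k$ enumerate the finitely many complete $(n{+}1)$-types $p$ with $\mu(\l\varphi_p(\bo b,\vec{\bo a})\rr)>0$. Put $\sa E_j=\l\varphi_{p_j}(\bo b,\vec{\bo a})\rr$ for $j\le k$ and $\sa E_j=\bot$ otherwise. These are pairwise disjoint since distinct isolating formulas $\varphi_p,\varphi_{p'}$ satisfy $T\models\neg(\varphi_p\wedge\varphi_{p'})$, and $\sum_j\mu(\sa E_j)=1$ because the events $\l\varphi_p(\bo b,\vec{\bo a})\rr$ partition $\Omega$ as $p$ ranges over all complete $(n{+}1)$-types. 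By (1) and (ii), each $\varphi_{p_j}$ is functional, so $\bo b$ is first order definable on $\sa E_j$ over $A$; and by (2), $\sa E_j=\l(\exists u)\,\varphi_{p_j}(u,\vec{\bo a})\rr\in\fo_\BB(A)$, which is definable over $A$ by Theorem~\ref{t-definableB}. Theorem~\ref{t-dcl} now gives $\bo b\in\dcl(A)$. The step I expect to require the most care is (ii): pointwise definability of $\bo b$ is precisely what guarantees a point at which $\varphi(b(\omega),\vec a(\omega))$ holds while $b(\omega)$ is first order definable over $\vec a(\omega)$, forcing $\varphi$ to be functional; everything else is routine manipulation with isolating formulas and the previously established descriptions of definable closure.
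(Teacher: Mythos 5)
Your proof is correct. The forward direction is essentially the paper's argument: (1) from Lemma \ref{l-pointwisedefinable}, and (2) by observing that $\l\varphi(\bo b,\vec{\bo a})\rr$ lies in $\dcl_\BB(A)=\fo_\BB(A)$ and that $\l(\exists u)\varphi(u,\vec{\bo a})\rr$ is an atom of $\fo_\BB(A)$ --- your observation (iii) just makes the atom claim explicit. The converse is where you diverge: the paper routes through Corollary \ref{c-dcl3}, checking $\fo_\BB(A\cup\{\bo b\})\subseteq\dcl_\BB(A)$ by writing an arbitrary formula $\theta(u,\vec v)$ as a finite disjunction of isolating formulas, whereas you apply Theorem \ref{t-dcl} directly, taking the partition $\sa E_j=\l\varphi_{p_j}(\bo b,\vec{\bo a})\rr$ indexed by the finitely many complete $(n{+}1)$-types of positive measure. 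The price of the direct route is your observation (ii), that an isolating formula satisfied by $(\bo b,\vec{\bo a})$ on a set of positive measure must be functional once $\bo b$ is pointwise definable; this is correct (a functional formula lies in the complete type realized at a suitable $\omega$, and functionality descends along $T\models\varphi\to\theta$) and is a genuine extra step the paper's route avoids, since there the functional formulas are supplied wholesale by the pointwise-definability machinery of Lemma \ref{l-pointwisemeasurable}. What your version buys is a self-contained verification of the hypotheses of Theorem \ref{t-dcl} in the $\aleph_0$-categorical case, essentially re-deriving the relevant instance of Corollary \ref{c-dcl2}/\ref{c-dcl3}; the paper's version is shorter because it leans on those corollaries as black boxes. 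Both are sound.
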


\begin{proof}
$(\Rightarrow)$: Suppose $\bo b\in \dcl({ A})$.  (1) holds by Lemma \ref{l-pointwisedefinable}.
Suppose $\varphi(u,\vec v)$ is isolating and $\mu(\l\varphi(\bo b,\vec{\bo a})\rr)>0$.  We have
$\l\varphi(\bo b,\vec{\bo a})\rr\in\fo_\BB(\{\bo b\}\cup{ A})$, so
by Corollary \ref{c-dcl3}, $\l\varphi(\bo b,\vec{\bo a})\rr\in\dcl_\BB(A)$. By Theorem \ref{t-categorical},
$\l\varphi(\bo b,\vec{\bo a})\rr\in\fo_\BB(A)$.
We note that $(\exists u)\varphi(u,\vec v)$ is an isolating formula, so $\l(\exists u)\varphi(u,\vec{\bo a})\rr$ is an atom of $\fo_\BB(A)$.
Therefore (2) holds.

$(\Leftarrow)$:  Assume (1) and (2).  By (2), for every isolating formula $\varphi(u,\vec v)$ such that $\mu(\l\varphi(\bo b,\vec{\bo a})\rr)>0$,
we have
$$\l\varphi(\bo b,\vec{\bo a})\rr\in\fo_\BB(A).$$
Every formula $\theta(u,\vec v)$ is $T$-equivalent to a finite disjunction
of isolating formulas in the variables $(u,\vec v)$.  It follows that $\fo_\BB(A\cup\{\bo b\})\subseteq\fo_\BB(A)$.
Therefore by Corollary \ref{c-dcl3}, $\bo b\in\dcl(A)$.
\end{proof}

\begin{cor}  \label{c-cat-definable2}
Suppose that $T$ is $\aleph_0$-categorical, $\bo b\in\hat{\cu K}$, and $A$ is a finite subset of $\hat{\cu K}$.   Then
$\bo{b}\in \dcl({ A})$ if and only if for every isolating formula $\psi(\vec v)$ there is a functional
formula $\varphi(u,\vec v)$ such that $\l\psi(\vec{\bo a})\rr\sqsubseteq\l\varphi(\bo b,\vec {\bo a})\rr.$
\end{cor}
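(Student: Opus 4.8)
The plan is to prove the two directions separately, with $(\Leftarrow)$ carrying the real content. For $(\Leftarrow)$, I would assume that for every isolating formula $\psi(\vec v)$ there is a functional formula $\varphi_\psi(u,\vec v)$ with $\l\psi(\vec{\bo a})\rr\sqsubseteq\l\varphi_\psi(\bo b,\vec{\bo a})\rr$. By the Ryll-Nardzewski Theorem, since $T$ is $\aleph_0$-categorical there are only finitely many isolating formulas $\psi_1(\vec v),\dots,\psi_m(\vec v)$ in the variables $\vec v$ up to $T$-equivalence; moreover these are pairwise inconsistent and $T\models(\forall\vec v)\bigvee_{i=1}^m\psi_i(\vec v)$. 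I would then form the single first order formula
$$\chi(u,\vec v)\ :=\ \bigvee_{i=1}^m\bigl(\psi_i(\vec v)\wedge\varphi_{\psi_i}(u,\vec v)\bigr),$$
and check that it is functional: in any model of $T$ and any tuple $\vec c$ exactly one disjunct $\psi_i(\vec c)$ holds, so $\chi(u,\vec c)$ becomes equivalent to $\varphi_{\psi_i}(u,\vec c)$, which has at most one solution. Using that $\l\cdot\rr$ distributes over $\wedge$ and $\vee$, together with the hypothesis $\l\psi_i(\vec{\bo a})\rr\sqsubseteq\l\varphi_{\psi_i}(\bo b,\vec{\bo a})\rr$ and the validity of $\bigvee_i\psi_i$ in $T$, I would compute
$$\l\chi(\bo b,\vec{\bo a})\rr=\bigsqcup_{i=1}^m\bigl(\l\psi_i(\vec{\bo a})\rr\sqcap\l\varphi_{\psi_i}(\bo b,\vec{\bo a})\rr\bigr)=\bigsqcup_{i=1}^m\l\psi_i(\vec{\bo a})\rr=\top.$$
Hence $\bo b$ is first order definable on $\top$ over $A$, i.e.\ $\bo b\in\fo(A)$, and therefore $\bo b\in\dcl(A)$ by Lemma \ref{l-firstorderdefinable}.

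For $(\Rightarrow)$, suppose $\bo b\in\dcl(A)$. Since $T$ is $\aleph_0$-categorical and $A$ is finite, $\fo_\BB(A)$ is finite by Theorem \ref{t-categorical}, so $\dcl(A)=\fo(A)$ by Corollary \ref{c-definable-finite}; thus $\bo b\in\fo(A)$. By the finite character of first order definability (Remarks \ref{r-definableover}) there is a functional formula witnessing this whose parameters lie among the entries of $\vec{\bo a}$, and substituting the appropriate variables of $\vec v$ turns it into a functional formula $\varphi(u,\vec v)$ with $\l\varphi(\bo b,\vec{\bo a})\rr=\top$. Then for every isolating $\psi(\vec v)$ we trivially have $\l\psi(\vec{\bo a})\rr\sqsubseteq\top=\l\varphi(\bo b,\vec{\bo a})\rr$, indeed with one $\varphi$ serving all $\psi$.

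The only delicate points are bookkeeping ones: that a functional formula remains functional after substituting parameters for variables and after adjoining dummy variables so that all formulas refer to the single fixed tuple $\vec v$ matching $\vec{\bo a}$, and the appeal to Ryll-Nardzewski to guarantee the disjunction defining $\chi$ is finite. The one step I expect to require genuine care is the verification in $(\Leftarrow)$ that $\chi$ is functional and that $\l\chi(\bo b,\vec{\bo a})\rr=\top$; everything else is immediate from results already established. (One could instead route $(\Leftarrow)$ through Corollary \ref{c-cat-definable1} or Corollary \ref{c-dcl3}, but the direct construction of $\chi$ appears shortest.)
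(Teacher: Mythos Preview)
Your proof is correct and follows essentially the same route as the paper's: for $(\Leftarrow)$ you build the single functional formula $\chi(u,\vec v)=\bigvee_i(\psi_i(\vec v)\wedge\varphi_{\psi_i}(u,\vec v))$ over a finite list of isolating formulas and verify $\l\chi(\bo b,\vec{\bo a})\rr=\top$, exactly as the paper does; for $(\Rightarrow)$ the paper cites Theorem~\ref{t-categorical} directly (condition~(7) gives $\dcl(A)=\fo(A)$) rather than going through Corollary~\ref{c-definable-finite}, but the content is identical.
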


\begin{proof}
$(\Rightarrow)$: Suppose $\bo b\in \dcl({ A})$. By Theorem \ref{t-categorical}, $\bo b$ is first order definable over $\vec{\bo a}$,
so there is a functional formula $\varphi(u,\vec v)$ such that $\l\varphi(\bo b,\vec{\bo a})\rr=\top$.  Then for every isolating $\psi(\vec v)$
we have
$\l\psi(\vec{\bo a})\rr\sqsubseteq\l\varphi(\bo b,\vec{\bo a})\rr.$

$(\Leftarrow)$:  There is a finite set $\{\psi_0(\vec v),\ldots,\psi_k(\vec v)\}$ that contains exactly one isolating formula
for each $|\vec{\bo a}|$-type of $T$.  By hypothesis, for each $i\le k$ there is a functional formula $\varphi_i(u,\vec v)$
such that $\l\psi_i(\vec{\bo a})\rr\sqsubseteq\l\varphi_i(\bo b,\vec{\bo a})\rr.$
Since the formulas $\psi_i(\vec v)$ are pairwise inconsistent, the formula $\bigvee_{i=0}^k (\psi_i(\vec v)\wedge\varphi_i(u,\vec v))$ is functional, and
$$\l \bigvee_{i=0}^k (\psi_i(\vec{\bo a})\wedge\varphi_i(\bo b,\vec{\bo a}))\rr=\top.$$
Hence $\bo b$ is first order definable over $\vec{\bo a}$, so by Lemma \ref{l-firstorderdefinable} we have $\bo b\in \dcl({ A})$.
\end{proof}

\subsection{The Theory $\DLO^R$}

We will use Corollary \ref{c-cat-definable2} to give a more natural characterization of the definable closure of a finite parameter set
in a model of $\DLO^R$, where $\DLO$ is the theory of dense linear order without endpoints.  Note that in $\DLO$, every type in
$(v_1,\ldots,v_n)$ has an isolating
formula of the form $\bigwedge_{i=1}^{n-1} u_i\alpha_i u_{i+1}$ where $\{u_1,\ldots u_n\}=\{v_1,\ldots,v_n\}$ and each $\alpha_i\in\{<,=\}$.
(This formula linearly orders the equality-equivalence classes).

\begin{cor}  \label{c-DLO-definable}  Let $T=\DLO$, $\bo b\in\hat{\cu K}$, and $A$ be a finite subset of $\hat{\cu K}$. Then
$\bo{b}\in \dcl({ A})$ if and only if for every isolating formula $\psi(v_1,\ldots,v_n)$ there is an $i\in\{1,\ldots,n\}$
such that $\l\psi(\vec {\bo a})\rr\sqsubseteq\l \bo b= \bo a_i\rr.$
\end{cor}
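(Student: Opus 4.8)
The plan is to derive this as a special case of Corollary~\ref{c-cat-definable2}, using the explicit description of isolating formulas in $\DLO$. Since $\DLO$ is $\aleph_0$-categorical, Corollary~\ref{c-cat-definable2} applies: $\bo b\in\dcl(A)$ if and only if for every isolating formula $\psi(\vec v)$ there is a functional formula $\varphi(u,\vec v)$ with $\l\psi(\vec{\bo a})\rr\sqsubseteq\l\varphi(\bo b,\vec{\bo a})\rr$. So the whole content of the corollary is to show that, over a fixed isolating formula $\psi(v_1,\dots,v_n)$ of the stated form $\bigwedge_{i=1}^{n-1} u_i\alpha_i u_{i+1}$, the existence of \emph{some} functional $\varphi$ with $\l\psi(\vec{\bo a})\rr\sqsubseteq\l\varphi(\bo b,\vec{\bo a})\rr$ is equivalent to the existence of an index $i$ with $\l\psi(\vec{\bo a})\rr\sqsubseteq\l\bo b=\bo a_i\rr$.

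The $(\Leftarrow)$ direction is immediate: the formula $u=v_i$ is functional, so if $\l\psi(\vec{\bo a})\rr\sqsubseteq\l\bo b=\bo a_i\rr$ we may take $\varphi(u,\vec v)$ to be $u=v_i$ and apply Corollary~\ref{c-cat-definable2}. For $(\Rightarrow)$, suppose $\bo b\in\dcl(A)$ and fix an isolating $\psi(v_1,\dots,v_n)$; by Corollary~\ref{c-cat-definable2} pick a functional $\varphi(u,\vec v)$ with $\l\psi(\vec{\bo a})\rr\sqsubseteq\l\varphi(\bo b,\vec{\bo a})\rr$. The idea is to argue pointwise in the nice randomization $\cu P$: for $\mu$-almost every $\omega\in\l\psi(\vec{\bo a})\rr$, we have $\cu M\models\varphi(b(\omega),\vec a(\omega))$, so $b(\omega)$ is definable over $A(\omega)=\{a_1(\omega),\dots,a_n(\omega)\}$ in the model $\cu M\models\DLO$. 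But in any model of $\DLO$, the only elements definable over a finite set $\{c_1,\dots,c_n\}$ are the $c_j$ themselves (a dense linear order has no nontrivial definable elements over parameters: any such element could be moved by an automorphism of $\cu M$ fixing the parameters, since the order relations among $\{b(\omega)\}\cup A(\omega)$ determine the type but there are always other realizations unless $b(\omega)$ equals some $a_j(\omega)$). Hence for almost every $\omega\in\l\psi(\vec{\bo a})\rr$ there is an index $j(\omega)$ with $b(\omega)=a_{j(\omega)}(\omega)$.

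The remaining point is to upgrade this ``almost everywhere some index works'' to ``a single index works on all of $\l\psi(\vec{\bo a})\rr$''. This is where the specific form of $\psi$ is used: on $\l\psi(\vec{\bo a})\rr$ the order type of the tuple $(\bo a_1,\dots,\bo a_n)$ is constant (the $\alpha_i$ pin down which coordinates are equal and which are strictly less), so the equality classes among the $\bo a_i$ are fixed on that event. Since $\psi$ is isolating and $\cu M$ can be taken sufficiently homogeneous, for almost every $\omega$ the element $b(\omega)$, being definable over $A(\omega)$ in $\DLO$, must lie in the same equality class regardless of $\omega$: if on a positive-measure subset $\sa F\sqsubseteq\l\psi(\vec{\bo a})\rr$ we had $b(\omega)=a_j(\omega)$ and on another positive-measure subset $b(\omega)=a_{j'}(\omega)$ with $a_j(\omega)\ne a_{j'}(\omega)$, then restricting $\varphi$ to these events would give two functional formulas picking out genuinely different definable elements of $\DLO$ with the same order-type data — impossible, since the order-type data is the same on all of $\l\psi(\vec{\bo a})\rr$ and a functional formula of $\DLO$ evaluated on a tuple with a fixed order type returns at most one value. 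Thus there is a fixed $i$ with $b(\omega)=a_i(\omega)$ for almost every $\omega\in\l\psi(\vec{\bo a})\rr$, i.e. $\l\psi(\vec{\bo a})\rr\sqsubseteq\l\bo b=\bo a_i\rr$.

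\textbf{Main obstacle.} The delicate step is the last one: producing a \emph{single} index $i$ valid on all of $\l\psi(\vec{\bo a})\rr$ rather than an index depending on $\omega$. The key observation that makes it work is that $\psi$ being an isolating formula of $\DLO$ forces the order type of $A(\omega)$ (including which coordinates coincide) to be literally constant across $\l\psi(\vec{\bo a})\rr$; a functional formula, by definition, returns a value depending only on the $\DLO$-type of the parameter tuple, hence the same coordinate (up to equality class) for every $\omega$ in the event. One must be a little careful that $\varphi(\bo b,\vec{\bo a})$ holding on $\l\psi(\vec{\bo a})\rr$ with $\varphi$ functional genuinely forces $b(\omega)$ into one equality class; this follows because $\exists u\,\varphi(u,\vec v)$ together with $\psi(\vec v)$ is again (equivalent to) an isolating formula, so it either includes ``the unique witness equals $v_i$'' for a fixed $i$, or it is inconsistent — and the former is exactly what we want.
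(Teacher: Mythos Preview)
Your proof is correct and follows the same route as the paper's: reduce to Corollary~\ref{c-cat-definable2} and use that $\dcl^{\cu M}(A)=A$ in any model of $\DLO$. The paper compresses your pointwise argument into a single first-order observation --- for every isolating $\psi$ and functional $\varphi$ there is an $i$ with $\DLO\models(\psi(\vec v)\wedge\varphi(u,\vec v))\rightarrow u=v_i$ --- which transfers directly to the randomization and makes your detour through the nice randomization and the ``single index'' discussion unnecessary.
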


\begin{proof}  For any  $\cu M\models\DLO$ and parameter set $A$, we have $\dcl^{\cu M}(A)=A$.  Therefore for every isolating formula
$\psi(v_1,\ldots,v_n)$ and functional formula $\varphi(u,v_1,\ldots,v_n)$ there exists $i\in\{1,\ldots,n\}$ such that
$$\DLO\models(\psi(v_1,\ldots,v_n)\wedge\varphi(u,v_1,\ldots,v_n))\rightarrow u=v_i.$$
The result now follows from Corollary \ref{c-cat-definable2}.
\end{proof}

In the theory $\DLO$, we define $\min(u,v)$ and $\max(u,v)$ in the usual way.  For $\bo a,\bo b\in\hat{\cu K}$, we let $\min(\bo a,\bo b)$ be the unique
element $\bo e\in\hat{\cu K}$ such that
$$\l e=\min(a,b)\rr=\top,$$
and similarly for $\max$.  For finite subsets $A$ of $\hat{\cu K}$, $\min(A)$ and $\max(A)$ are defined by repeating the two-variable functions
$\min$ and $\max$ in the natural way.

We next show that in $\DLO^R$, the definable closure of a finite set can be characterized as the closure under a ``choosing function'' of four variables.

\begin{df}  In the theory $\DLO$, let $\ell$ be the function of four variables defined by the condition
$$ \ell(u,v,x,y)=x \mbox{ if } u < v, \mbox{ and } \ell(u,v,x,y) = y \mbox{ if not } u < v.$$
For $\bo a,\bo b,\bo c,\bo d\in\cu K$, let  $\ell(\bo a,\bo b,\bo c,\bo d)$ be the unique element $\bo e\in\hat{\cu K}$ such that
$\l e=\ell(a,b,c,d)\rr=\top$.
Given a set $ A\subseteq\hat{\cu K}$, let $\lcl( A)$ be the closure of $ A$ under the function $\ell$.
\end{df}

Note that in $\DLO$, the function $\ell$ is definable without parameters.  In both $\DLO$ and $\DLO^R$, $\min(u,v)=\ell(u,v,u,v)$, and $\max(u,v)=\ell(u,v,v,u)$.

\begin{prop}  \label{p-DLO}
Let $T=\DLO$.  Then for every finite subset $A$ of $\hat{\cu K}$, $\dcl( A)=\lcl( A)$.
\end{prop}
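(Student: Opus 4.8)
The plan is to show the two inclusions $\lcl(A)\subseteq\dcl(A)$ and $\dcl(A)\subseteq\lcl(A)$ separately, then use Theorem \ref{t-separable} to finish. The first inclusion is the easy direction: since $\ell$ is a first order definable function in $\DLO$ (indeed $\ell(u,v,x,y)=e$ is defined by a functional formula without parameters), any element obtained by applying $\ell$ to elements of $A$ is first order definable over $A$, hence lies in $\dcl(A)$ by Lemma \ref{l-firstorderdefinable}. More precisely, one checks by induction on the construction of $\lcl(A)$ that every element of $\lcl(A)$ is first order definable over $A$: if $\bo e=\ell(\bo a_1,\bo a_2,\bo a_3,\bo a_4)$ with each $\bo a_i$ first order definable over $A$ via a functional formula, then composing with the definition of $\ell$ yields a functional formula for $\bo e$ over $A$. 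So $\lcl(A)\subseteq\fo(A)\subseteq\dcl(A)$, and since $\dcl(A)$ is metrically closed (Fact \ref{f-definableclosure}), we even get $\cl(\lcl(A))\subseteq\dcl(A)$.

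**For the harder inclusion** $\dcl(A)\subseteq\lcl(A)$, I would first reduce to showing $\fo(A)\subseteq\cl(\lcl(A))$: once that is established, Theorem \ref{t-separable} gives $\dcl(A)=\cl(\fo(A))\subseteq\cl(\cl(\lcl(A)))=\cl(\lcl(A))$, and combined with the first paragraph this pins down $\dcl(A)$ between $\lcl(A)$ and $\cl(\lcl(A))$. (One should double-check whether the claimed equality $\dcl(A)=\lcl(A)$ really holds on the nose or only up to closure; if $\lcl(A)$ is already metrically closed for finite $A$, we get equality, and I expect that to be the case since $A$ is finite and the relevant combinatorics is finite — but this needs a short argument.) So fix $\bo b\in\fo(A)$. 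By Corollary \ref{c-DLO-definable}, for every isolating formula $\psi(v_1,\dots,v_n)$ (listing the order type of $\vec{\bo a}$, i.e.\ a chain of $<$ and $=$) there is an index $i$ with $\l\psi(\vec{\bo a})\rr\sqsubseteq\l\bo b=\bo a_i\rr$. The point is that on the event where $\vec{\bo a}$ has a given order type, $\bo b$ pointwise equals one of the $\bo a_i$, and the index $i$ is a definable function of that order type.

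**The key construction** is to assemble $\bo b$ by gluing together the various $\bo a_i$ over the finitely many events $\l\psi_j(\vec{\bo a})\rr$, one for each order type $\psi_j$, using the function $\ell$ to do the gluing. The observation that makes $\ell$ sufficient is that the characteristic function of any event of the form $\l a_k < a_l\rr$ or $\l a_k = a_l\rr$ can be produced using $\ell$: for instance $\ell(\bo a_k,\bo a_l,\bo c,\bo d)$ agrees with $\bo c$ exactly where $\bo a_k<\bo a_l$, so iterating $\ell$ lets one implement ``if (current order-type clause holds) then pick $\bo a_i$ else $\ldots$'' for a full finite case split over order types. Concretely, for each order type $\psi_j$ pick the index $i(j)$ given by Corollary \ref{c-DLO-definable}; then $\l\psi_j(\vec{\bo a})\rr$ is a Boolean combination of basic order events, each of which is ``recognized'' by a nested-$\ell$ term, so one builds by iterated application of $\ell$ an element $\bo b'\in\lcl(A)$ agreeing with $\bo a_{i(j)}$ on each $\l\psi_j(\vec{\bo a})\rr$; since the $\l\psi_j(\vec{\bo a})\rr$ partition $\top$ up to measure zero and $\bo b$ agrees with $\bo a_{i(j)}$ on each of them, $\bo b'=\bo b$ in $\cu N$.

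**The main obstacle** I anticipate is the bookkeeping in this last step: verifying carefully that arbitrary Boolean combinations of the basic events $\l a_k<a_l\rr$, $\l a_k=a_l\rr$ (which is what an order-type clause $\psi_j$ amounts to) can be realized by iterated $\ell$ — equivalently, that the $\ell$-closure of $A$ together with a couple of ``generic'' reference elements is rich enough to contain the characteristic function of every event in $\fo_\BB(A)$ in the relevant sense. Because $A$ is finite, there are only finitely many order types and finitely many basic order events, so this is a finite combinatorial check rather than a limiting argument, but one must be a little careful that $\ell$ alone (not needing, say, a separate ``or'' primitive) suffices — the trick is that nesting $\ell$ on the $x$,$y$ arguments implements conjunction of the guard conditions, and one can reach disjunctions by De Morgan since $\ell(v,u,\bo c,\bo d)$ swaps the two branches for $\neg(u<v)$ modulo the measure-zero event $\l u=v\rr$, and $\l u=v\rr$ itself is handled by comparing $\ell(u,v,\cdot,\cdot)$ with $\ell(v,u,\cdot,\cdot)$. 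Once this combinatorial lemma is in place, the rest is routine.
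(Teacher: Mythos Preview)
Your core construction is exactly the paper's approach: use $\ell$ to build characteristic functions of events in $\fo_\BB(A)$, then glue the appropriate $\bo a_{i(j)}$'s together over the partition by order types, invoking Corollary \ref{c-DLO-definable}. However, you take an unnecessary detour through Theorem \ref{t-separable} and closures. Since $\DLO$ is $\aleph_0$-categorical, Theorem \ref{t-categorical} already gives $\dcl(A)=\fo(A)$ and both are \emph{finite}; in particular $\lcl(A)$ is finite (hence trivially closed), and Corollary \ref{c-DLO-definable} applies directly to any $\bo b\in\dcl(A)$, not just to $\fo(A)$. Your own construction then lands $\bo b$ in $\lcl(A)$ on the nose, so no limiting argument is needed and your parenthetical worry evaporates.

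Two small points where the paper is cleaner than your sketch. First, the ``generic reference elements'' should be taken to be $\bo 0=\min(A)$ and $\bo 1=\max(A)$, which lie in $\lcl(A)$; one must separately dispose of the degenerate case $\mu(\l\bo 0<\bo 1\rr)=0$ (then $A$ is a singleton and the result is trivial). Second, your Boolean bookkeeping is more tangled than necessary: once you have characteristic functions with respect to $\bo 0,\bo 1$, intersection is $\min$, union is $\max$, and complement is $\bo d\mapsto\ell(\bo d,\bo 1,\bo 1,\bo 0)$---no need to worry about ``modulo measure-zero'' events, since $\l a_k=a_l\rr$ is simply the complement of $\l a_k<a_l\rr\sqcup\l a_l<a_k\rr$. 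The final gluing is then $\max(\bo c_1\upharpoonright\sa D_1,\ldots,\bo c_n\upharpoonright\sa D_n)$ over the atoms $\sa D_i$ of $\dcl_\BB(A)$, where $\bo c\upharpoonright\sa D=\ell(\bo d,\bo 1,\bo 0,\bo c)$ with $\bo d$ the characteristic function of $\sa D$.
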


\begin{proof}  It is clear that $\lcl( A)\subseteq\dcl( A)$.

We prove the other inclusion.  If $A$ is empty, the result is trivial, so we assume $A$ is non-empty.
Let $\bo 0=\min(A), \bo 1=\max(A)$.  We have $\bo 0, \bo 1\in\lcl( A)$.
Let $\Omega_0=\l 0<1\rr$.  Note that $\Omega\setminus\Omega_0=\l 0=1\rr$.  If $\mu(\Omega_0)=0$, then $ A$ is a singleton,
and we trivially have $\lcl( A)=\dcl( A)= A$.  We may therefore assume that $\mu(\Omega_0)>0$.  To simplify notation we will
instead assume that $\Omega_0=\Omega$; the argument in the general case is similar.

In the following, all characteristic functions are understood to be with respect to $\bo 0, \bo 1$.
Note that $\ell(\bo a,\bo b,\bo 0,\bo 1)$ is the characteristic function of the event $\l \bo a <\bo b\rr$.
If $\bo d$ is the characteristic function of an event $\sa D$ and $\bo e$ is the characteristic function of an event $\sa E$,
then $\ell(\bo d,\bo 1,\bo 1,\bo 0)$ is the characteristic function of $\neg\sa D$, $\min(\bo d,\bo e)$ is the characteristic function
of $\sa D\sqcap\sa E$, and $\max(\bo d,\bo e)$ is the characteristic function of $\sa D\sqcup\sa E$.  It follows that for every
quantifier-free first order formula $\varphi(\vec v)$ of $\DLO$ with $|\vec v|=|\vec{\bo a}|$, the characteristic
function of the event $\l\varphi(\vec{\bo a})\rr$ belongs to $\lcl(A)$.  Since $\DLO$ admits quantifier
elimination, the characteristic function of every event that is first order definable over $A$ belongs to $\lcl( A)$.
Hence by Theorem \ref{t-categorical}, the characteristic function of every event in $\dcl_\BB(A)$ belongs to $\lcl(A)$.
Moreover, for every $\bo c\in A$ and event $\sa D\in\dcl_\BB(A)$ with characteristic function $\bo d$,
$\bo c\upharpoonright{\sa D}:=\ell(\bo d,\bo 1,\bo 0,\bo c)$ is the element that agrees with $\bo c$ on
$\sa D$ and agrees with $\bo 0$ on the complement of $\sa D$, so $\bo c\upharpoonright{\sa D}$ belongs to $\lcl( A)$.
Let $\{\sa D_1,\ldots,\sa D_n\}$ be the set of atoms of $\dcl_\BB( A)$ (which is finite because $\DLO$ is $\aleph_0$-categorical).
By Corollary \ref{c-DLO-definable}, every element of $\dcl( A)$ has the form
$$ \max(\bo c_1\upharpoonright\sa D_1,\ldots,\bo c_n\upharpoonright\sa D_n)$$
for some $\bo c_1,\ldots,\bo c_n\in A$.  Therefore $\dcl( A)\subseteq\lcl( A)$.
\end{proof}

\begin{ex}  In this example we show that the exchange property fails for $\DLO^R$, even though it holds for $\DLO$.  Thus
the exchange property is not preserved under randomizations.  Let $T=\DLO$.  By Fullness, there exist elements $\bo a, \bo b\in\hat{\cu K}$
such that $\max(\bo a,\bo b)\notin\{\bo a,\bo b\}$.  Let $\bo c=\max(\bo a,\bo b), \bo d=\min(\bo a,\bo b)$.
It is easy to check that
$$\dcl(\{\bo a,\bo b\})=\{\bo a,\bo b,\bo c,\bo d\}, \quad \dcl(\{\bo a,\bo c\})=\{\bo a,\bo c\},\quad \dcl(\{\bo a\})=\{\bo a\}.$$
Thus $\bo c\in\dcl(\{\bo a,\bo b\})\setminus\dcl(\{\bo a\})$ but $\bo b\notin\dcl(\{\bo a,\bo c\})$.
\end{ex}

\section*{References}


\vspace{2mm}

[AK]  Uri Andrews and H. Jerome Keisler.  Randomizations of Theories with Countably Many
Countable Models.  To appear.  Available online at www.math.wisc.edu/$\sim$Keisler.

[Be]  Ita\"i{} Ben Yaacov.  On Theories of Random Variables.  To appear, Israel J. Math.  ArXiv:0901.1584v3 (2001).

[BBHU]  Ita\"i{} Ben Yaacov, Alexander Berenstein,
C. Ward Henson and Alexander Usvyatsov. Model Theory for Metric Structures.
To appear, Lecture Notes of the London Math. Society.

[BK] Ita\"i{} Ben Yaacov and H. Jerome Keisler. Randomizations of Models as Metric Structures.
Confluentes Mathematici 1 (2009), pp. 197-223.

[BU] Ita\"i{} Ben Yaacov and Alexander Usvyatsov. Continuous first order logic and local stability. Transactions of the American
Mathematical Society 362 (2010), no. 10, 5213-5259.

[CK]  C.C.Chang and H. Jerome Keisler.  Model Theory.  Dover 2012.

[EG]  Clifton Early and Isaac Goldbring.  Thorn-Forking in Continuous Logic.  Journal of Symbolic Logic
77 (2012), 63-93.

[Go1]  Isaac Goldbring.  Definable Functions in Urysohn's Metric Space.  To appear, Illinois Journal of Mathematics.

[Go2]  Isaac Goldbring.  An Approximate Herbrand's Theorem and Definable Functions in Metric Structures.
Math. Logic Quarterly 50 (2012), 208-216.

[Go3] Isaac Goldbring.  Definable Operators on Hilbert Spaces.  Notre Dame Journal of Formal Logic 53 (2012), 193-201.

[GL]  Isaac Goldbring and Vinicius Lopes.  Pseudofinite and Pseudocompact Metric Structures.  To appear, Notre Dame Journal of Formal Logic.
Available online at www.homepages.math.uic.edu/$\sim$isaac.

[Ke1] H. Jerome Keisler.  Randomizing a Model.  Advances in Math 143 (1999),
124-158.

[Ke2] H. Jerome Keisler.  Separable Randomizations of Models.  To appear.  Available online at
www.math.wisc.edu/$\sim$Keisler.

\end{document}